\renewcommand{\epsilon}{\varepsilon}
\renewcommand{\setminus}{\smallsetminus}
\renewcommand{\emptyset}{\varnothing}
\newtheorem{theorem}{Theorem}[section]
\newtheorem{proposition}[theorem]{Proposition}
\newtheorem{corollary}[theorem]{Corollary}
\newtheorem{lemma}[theorem]{Lemma}
\theoremstyle{definition}
\theoremstyle{remark}
\newcommand{\Q}{\mathbb Q}
\newcommand{\Z}{\mathbb Z}
\newcommand{\R}{\mathbb R}
\newcommand{\BR}{\mathbb{R}}
\newcommand{\BZ}{\mathbb{Z}}
\newcommand{\BN}{\mathbb{N}}
\newcommand{\fpinfty}{{\FP}_{\infty}}
\newcommand{\FFF}{\operatorname{F}}
\newcommand{\FP}{\operatorname{FP}}
\newcommand{\cohom}[3]{H^{{\raise1pt\hbox{$\scriptstyle#1$}}}(#2\>\!,#3)}
\newcommand{\tatecohom}[3]%
     {\widehat H^{{\raise1pt\hbox{$\scriptstyle#1$}}}(#2\>\!,#3)}
\newcommand{\Cohom}[3]%
     {H^{{\raise1pt\hbox{$\scriptstyle#1$}}}\big(#2\>\!,#3\big)}
\newcommand{\Tatecohom}[3]%
     {\widehat H^{{\raise1pt\hbox{$\scriptstyle#1$}}}\big(#2\>\!,#3\big)}
\newcommand{\homol}[3]{H_{{\lower1pt\hbox{$\scriptstyle#1$}}}(#2\>\!,#3)}
\newcommand{\homolog}[2]{H_{{\lower1pt\hbox{$\scriptstyle#1$}}}(#2)}
\newcommand{\mono}{\rightarrowtail}
\newcommand{\epi}{\twoheadrightarrow}
\newcommand{\eg}{{\underline EG}}
\title[finite groups acting on Thompson groups]{Fixed points of finite
groups acting on generalised Thompson
groups}
\author{D.~ H. ~Kochloukova}
\address{Dessislava H.~Kochloukova, Department of Mathematics,
University of Campinas,
Cx. P. 6065,
13083-970 Campinas, SP, Brazil}
\email{desi@unicamp.br}
\author{C.~Mart\'inez-P\'erez}
\address{Conchita Mart\'inez-P\'erez, Departamento de Matem\'aticas,
Universidad de
Zaragoza,
50009 Zaragoza, Spain} \email{conmar@unizar.es}
\author{B.~ E.~A.~Nucinkis}
\address{Brita E.~A.~Nucinkis, School of Mathematics, University of
Southampton,
Southampton,
SO17 1BJ, United Kingdom}
\email{bean@soton.ac.uk}
\date{\today} 
\keywords{}
\subjclass[2000]{
20J05}
\thanks{The first author is partially supported by "bolsa de produtividade
em pesquisa, CNPq". The second author is partially supported by Gobierno de Aragon and MTM2007-68010-C03-01. This work was also partially supported by Royal Society International Travel Grant TG08182 and LMS Scheme 4 Grant 4814.}
\begin{document}

\thispagestyle{empty}

\begin{abstract} We study centralisers of finite order automorphisms
of the generalised
Thompson groups $F_{n, \infty}$ and conjugacy classes of  finite
subgroups in finite
extensions of $F_{n, \infty}$.  In particular we show that centralisers of
finite automorphisms in $F_{n,\infty}$ are either of type $\FP_\infty$ or
not finitely generated.

\noindent As an application we deduce the following
result about the
Bredon type of  such finite extensions: any finite extension  of $F_{n,
\infty},$ where the elements of
finite order act on $F_{n, \infty}$ via conjugation with piecewise-linear
homeomorphisms,
is of type Bredon $\FFF_{\infty}$.
In particular finite extensions of $F = F_{2, \infty}$ are  of type
Bredon $\FFF_{\infty}$.  \end{abstract}
\maketitle


\section{Introduction}

In the 1960s R. Thompson defined the group $F$, which has a realisation as
a subgroup of the group of increasing PL (piece-wise linear)
homeomorphisms of the unit interval. For a fixed natural number $n$ the
group $F$ has a generalization  $F_{n}$  given by the infinite
presentation
$$\langle x_0, x_1, \ldots , x_i, \ldots | x_i^{x_j} = x_{i+ n - 1} \hbox{
for } 1\leq i,  0 \leq j < i \rangle.$$
Furthermore the group $F_{n}$ has a realization, denoted $F_{n,\infty}$,
as a subgroup  of the group of increasing PL homeomorphisms of the real
line $\BR$, which we consider in this paper, see \cite[Section 2]{BrinGuzman}.

The R. Thompson group $F$ satisfies strong cohomological finiteness
conditions: it is of homological type $\FP_\infty$, is finitely
presented \cite{BrownRoss} but has infinite cohomological dimension. The
same properties hold for the generalised R. Thompson groups  of \cite{Brown2}
and some further generalizations of $F$ as in \cite{Stein}.

The homological property $\FP_{\infty}$ has a homotopical counterpart.
The coresponding homotopical condition $\FFF_{\infty}$ requires that
the group admits a finite type model for $EG$, the universal cover of the
Eilenberg-Mac Lane space.
A group of type $\FP_{\infty}$ is not necessary of type $\FFF_{\infty}$
\cite{BB} and in general
a group $G$ is of type $\FFF_\infty$ if and only if it is finitely
presented and of type $\FP_{\infty}$. Though the group
$F_{n,\infty}$ admits a finite type model for $EG$, such a model is never
finite dimensional as $F_{n, \infty}$ has infinite cohomological
dimension.

In this paper we consider Bredon cohomological finiteness conditions of
finite extensions of generalised Thomspon groups. A Bredon cohomological
finiteness condition is a finiteness condition satisfied by groups
admitting a cocompact model for $\eg$, the classifying space for proper
actions.  A $G$-CW-complex $X$ is a model for $\eg$, if $X^H$ is
contractible whenever $H\leq G$ is finite and $X^H = \emptyset$ otherwise.
By definition  a group $G$ is of type Bredon $\FFF_\infty$ if it admits a
finite type model for $\eg$. For background on  Bredon type $\FFF_\infty$
the reader is referred to \cite{Luck, Mislinsurvey}.  Obviously if the
group $G$ is torsion-free, as are the generalised Thompson groups
considered here,  the  Bredon type $\FFF_\infty$ is equivalent to the
ordinary homotopical type $\FFF_{\infty}$.

In \cite{Luck}, W. L\"uck gave an algebraic criterion for an arbitrary
group to admit a finite type model for $\eg.$ In particular, the following
two conditions are equivalent:
\begin{itemize}
\item[(i)] $G$ is of type Bredon $\FFF_\infty$
\item[(ii)] $G$ has finitely many conjugacy classes of finite subgroups
and centralisers $C_G(H)$ of finite subgroups $H$ of $G$ are of type
$\FFF_\infty.$
\end{itemize}

Although the ordinary finiteness conditions $\FP_\infty$ and $\FFF_\infty$
are preserved under finite extensions, it was shown  in
\cite{LearyNucinkis} that generally this does not hold in the Bredon
context. In particular, there are examples of finite extensions of groups
of type $\FFF$, i.e. of groups admitting a finite Eilenberg-Mac Lane
space, for which either of the above conditions fails.  For large classes of
groups, finite extensions of groups of type Bredon $\FFF_\infty$ are again
of type Bredon $\FFF_\infty$. Examples include hyperbolic groups \cite{MS}
and soluble groups \cite{MartinezNucinkis, KMN2}.

In this paper we study finite extensions of $F_{n, \infty}$.  L\"uck's
result implies that in order to study the Bredon type of finite extensions of $F_{n,
\infty}$, we first have to study  the group of fixed points of finite order
automorphisms of $F_{n, \infty}$. Our first main result points exactly in this
direction.

\medskip\noindent
{\bf Theorem A.} \label{thmA} {\it Let $F_{n, \infty}$ be a generalised
Thompson group and
$\varphi$ be an automorphism of finite order of  $F_{n, \infty}$. Then
\begin{itemize}
\item[(1)]There is a decreasing homeomorphism $h$ of the real line such
that
$\varphi(f) =
h^{-1} f h$, $h^2 = id$ and there is a unique real number $t_0$ such
that $(t_0)h = t_0$;
\item[(2)]The group of the  fixed points of $\varphi$ is of type
$\FFF_{\infty}$
if and only if
there is an $f \in F_{n, \infty}$ such that $(t_0)f = t_0$ and the right-hand
slope of $f$ at
$t_0$ is not 1.
\item[(3)] If $h$ is PL then the group of the  fixed points of $\varphi$
is of
type $\FFF_{\infty}$.
In particular, if $n = 2$ the group of the fixed points of $\varphi$ is of
type
$\FFF_{\infty}$.
\end{itemize}}

\medskip\noindent Part (2) of Theorem A is proved in Sections
\ref{sectiondirection} and \ref{theotherdirection} and  the proof  is
completed in Section \ref{sectionproofA}. The last line of part (3) of
Theorem A  follows directly from the fact that the structure of $Aut(F)$
is well understood, see \cite{Brin}. In particular,  any automorphism of
$F$ is given by conjugation with a PL homeomorphism $h$. The case $n \geq
3$ is much more complicated:  there are automorphisms of
$F_{n, \infty}$  given by conjugation with homeomorphisms of the
real line $h$ which  are not PL \cite{BrinGuzman}. Such automorphisms are
called exotic. In the final Section 10  we exhibit infinitely many exotic automorphisms of
finite order for any $n \geq 3$, making substantial use of a construction
from \cite{BrinGuzman}.

\noindent
Note that the proof of Lemma \ref{swaping0}
relies on a Bieri-Strebel-Neumann result about the $\Sigma^1$-invariant of
finitely generated subgroups of groups of PL homeomorphisms of a closed
interval \cite{BNS}. This together with Lemma \ref{swaping} is one crucial ingredient of the proof of Theorem A. In the preliminaries section we include the necessary background
for $\Sigma$-theory  required later on. It is 
interesting to note that methods from $\Sigma$-theory were also involved when finiteness conditions of centralisers of finite subgroups of soluble groups where considered  in \cite{MartinezNucinkis}.

The following proposition can be deduced as a corollary of Theorem
A, but it has an independent short proof, see Corollary \ref{propB}. Note
that we do
not know of an example where the group of fixed points of $\varphi$ is
infinitely generated.

\medskip\noindent
  {\bf Proposition B.} {\it Let $F_{n, \infty}$ be a generalised
Thompson group and let $\varphi$ be an automorphism of finite order of
$F_{n, \infty}$. Then  the group of  fixed points of $\varphi$ is either
infinitely generated or of type $\FFF_{\infty}$.}

\medskip\noindent
The proof of Proposition B together with Theorem A imply that in
the second part of Theorem A  the condition $f \in F_{n, \infty}$ can be
substituted with $f \in
C_{F_{n,\infty}}(\varphi)$, see Corollary \ref{equivalent}.

The next step in our programme on  Bredon finiteness properties is to study
the conjugacy classes of finite subgroups in a finite extension of $F_{n,
\infty}$.
By the first part of Theorem A we obtain that  finite order non-trivial
automorphisms of $F_{n, \infty}$ have order 2.    This motivates the study
 of  conjugacy classes of
finite subgroups of split extensions of $F_{n, \infty}$ by the cyclic
group of order 2. The reader is referred to Section \ref{sectionconjugacy} and
 Theorem \ref{thmclasses} in particular. Combining Theorem \ref{thmclasses} with
Theorem A
and L\"uck's  criterion,  we obtain the main result of this
paper :

\medskip\noindent
     {\bf Theorem C.} {\it Any finite extension $G$ of $F_{n, \infty}$
where the elements of
finite order act on $F_{n, \infty}$ via conjugation with PL
homeomorphisms  is Bredon
$\FFF_{\infty}$.}

\medskip\noindent  We actually prove a slightly stronger version of
Theorem C. In Lemma \ref{order2} we show that any non-identity finite
order automorphism of $F_{n,\infty}$ is given by conjugation with  a
homeomorphism $h$ of the real line fixing a unique $t_0 \in \R.$ The proof
of Theorem C only requires that $t_0 \in \frac 12 \Z[\frac 1n]$ and this
is satisfied for piecewise linear $h$.   As a corollary of Theorem C we
obtain the following result.

  \medskip\noindent
{\bf Corollary D.} {\it Any finite extension of $F$ is Bredon
$\FFF_{\infty}$.}

  \medskip\noindent
     The proofs of Theorem C and Corollary D are completed in Section
\ref{sectionproofCD}.

\section{Preliminaries on the generalised R. Thompson groups}

\noindent Adopting the notation of \cite{BrinGuzman}, we write  $F_{n,
\infty}$ for
  the group of PL increasing homeomorphisms $f$ of $\BR$ acting on the
right on the real line such that the set $X_f$ of break points of $f$ is a
discrete subset of $\BZ[{1 \over n}]$,
$f(X_f) \subseteq
\BZ[\frac{1}{n}]$ and slopes are integral powers of $n$. Furhtermore,
there
are integers $i$ and
$j$
(depending on $f$) with
$$(x)f = \begin{cases} x + i (n-1)  \mbox{ for } x > M,  \\
        x + j (n-1) \mbox{ for }  x < - M \end{cases}$$
  for sufficiently large $M$ (depending on $f$ again). For
$t \in \BZ$  we define  $F_{n,t}$ to be the subgroup of $F_{n, \infty}$ of
all
elements which are the
identity map on the interval $(- \infty, t]$. By
\cite[Lemma~2.1.2]{BrinGuzman}
$F_{n,0}$ is isomorphic to the generalised Thompson group $F_{n}$.
Furthermore,  by
\cite[Lemma~2.1.6]{BrinGuzman} $F_{n,
\infty} \simeq F_{n,t}$ for every $t \in \BZ$.  Similarly, for
$t \in \BR$ we define
$$F_{n,t} = \{ f \in F_{n, \infty} \mid \hbox{ the restriction of } f
\hbox{ on } (-\infty, t] \hbox{ is the identity map} \}$$

\begin{lemma} \label{isomorphism} If $t \in \BZ[ \frac{1}{n}]$ then
$F_{n,t} \simeq
F_{n,0}$. In particular $F_{n,t}$ is of type $\FFF_{\infty}$.
\end{lemma}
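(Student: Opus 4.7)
The plan is to exhibit an explicit isomorphism $F_{n,0} \to F_{n,t}$ given by conjugation by the translation $\tau_t : \BR \to \BR$, $(x)\tau_t = x + t$. Note that $\tau_t$ itself is not an element of $F_{n, \infty}$ in general, because $t$ need not be a multiple of $n-1$; nevertheless, conjugation by $\tau_t$ will turn out to preserve $F_{n,\infty}$ as soon as $t \in \BZ[\tfrac{1}{n}]$.

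First I would define $\varphi : F_{n, \infty} \to F_{n, \infty}$ by $\varphi(f) = \tau_t^{-1} f \tau_t$, which in right-action notation becomes $(x)\varphi(f) = ((x - t) f) + t$, and check that $\varphi(f)$ lies in $F_{n,\infty}$. This requires three observations, all routine and each depending on the hypothesis $t \in \BZ[\tfrac{1}{n}]$: the break points of $\varphi(f)$ are the translates $X_f + t \subseteq \BZ[\tfrac{1}{n}]$, their images under $\varphi(f)$ lie in $f(X_f) + t \subseteq \BZ[\tfrac{1}{n}]$, and the slopes of $\varphi(f)$ equal those of $f$, hence are integral powers of $n$. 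The asymptotic condition is preserved: if $(x)f = x + i(n-1)$ for $x \gg 0$, then $(x)\varphi(f) = x + i(n-1)$ for $x \gg 0$ as well, and analogously at $-\infty$. The same computation applied to $\tau_t^{-1} = \tau_{-t}$ shows that $\varphi$ is an automorphism of $F_{n, \infty}$ with inverse $g \mapsto \tau_t g \tau_t^{-1}$.

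Next I would verify that $\varphi$ restricts to an isomorphism $F_{n, 0} \to F_{n, t}$. For $f \in F_{n,0}$ and $x \leq t$, one has $x - t \leq 0$, so $(x-t) f = x - t$, whence $(x)\varphi(f) = x$; this shows $\varphi(F_{n,0}) \subseteq F_{n,t}$. The reverse inclusion follows symmetrically by applying $\varphi^{-1}$, so $\varphi(F_{n,0}) = F_{n,t}$. Combining with the isomorphism $F_{n,0} \simeq F_n$ from \cite[Lemma 2.1.2]{BrinGuzman} yields $F_{n,t} \simeq F_n$, and since the generalised Thompson group $F_n$ is finitely presented and of type $\FP_\infty$, it is of type $\FFF_\infty$, completing the proof.

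There is no serious obstacle here: the entire content of the lemma reduces to the verification that translation by an element of $\BZ[\tfrac{1}{n}]$ preserves the three defining data (break-point set, slope set, asymptotic translation) of $F_{n,\infty}$ under conjugation, which is the single place where the hypothesis $t \in \BZ[\tfrac{1}{n}]$ is used.
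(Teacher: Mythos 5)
Your proof is correct and is essentially the paper's own argument: the paper also conjugates by the translation $(x)g = x + t$ (just stated in the direction $F_{n,t}\to F_{n,0}$), and your extra verifications that such conjugation preserves break points, slopes and the asymptotic behaviour are exactly the routine checks the paper leaves implicit.
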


\begin{proof}  The map $\theta : F_{n, t} \to F_{n, 0}$ defined by
$\theta(f) = g f
g^{-1}$, where $(x)g = x + t$, is an isomorphism.
\end{proof}

\section{Preliminaries on Sigma theory}

\noindent In this section we collect facts about the homological $\Sigma$
invariants, which we shall use in Section  \ref{sectiondirection}. Note
that there is also a
homotopical version, but we will not make any use of it here.  Let $G$ be
a
finitely generated group and  $\chi : G \to \BR$  be a non-zero character.
We define the monoid
$$
G_{\chi} = \{ g \in G \mid \chi(g) \geq 0 \}
$$
and the character sphere
$$S(G) = Hom(G, \BR) \setminus \{ 0 \} /\sim,$$
  where $\sim$ is an equivalence relation on $Hom(G, \BR) \setminus \{ 0
\}$ defined in the following way : $\chi_1
\sim \chi_2$ if and only if there is a positive real number $r$ such
that $r \chi_1 =
\chi_2$.
By definition, see \cite{BieriRenz},  for the trivial right $\BZ G
$-module $\BZ$
$$
\Sigma^m(G, \BZ) = \{ [\chi] \in S(G) \mid \BZ \hbox{ is } \FP_m \hbox{
over } \BZ
G_{\chi} \},
$$
where $[\chi]$ denotes the equivalence class of $\chi$ and $m\geq 0$
or $m=\infty$.

\subsection{Meinert's results}
The following is a result due to H. Meinert, but the proof was
published  in
\cite[Lemma~9.1]{Gerhke}.

\begin{lemma} \label{meinert1} Let $G_1$ and $G_2$ be groups of type
$\FP_{d+1}$, and let $i$ and
$j$ be non-negative integers such that $ i + j \leq d$. Then
$$
\pi_1^*( \Sigma^i(G_1, \BZ)) + \pi_2^* (\Sigma^j(G_2, \BZ)) \subseteq
\Sigma^{i+j +
1}(G_1 \times G_2, \BZ)
$$
where $\pi_i^* : S(G_i) \to S(G_1 \times G_2)$ is  the map induced by
the canonical
projection $\pi_i : G_1 \times G_2 \to G_i $.
\end{lemma}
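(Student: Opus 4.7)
The plan is to follow Meinert's original strategy of combining partial resolutions via the tensor product, using the Bieri--Renz characterisation of $\Sigma^m$ via chain endomorphisms. Since $\mathrm{Hom}(G_1\times G_2,\BR)=\pi_1^*\mathrm{Hom}(G_1,\BR)\oplus\pi_2^*\mathrm{Hom}(G_2,\BR)$ and the sum on the left is formed by adding representatives in the character group, it suffices to prove: if $\chi_1\in\mathrm{Hom}(G_1,\BR)$ and $\chi_2\in\mathrm{Hom}(G_2,\BR)$ satisfy $[\chi_1]\in\Sigma^i(G_1,\BZ)$ and $[\chi_2]\in\Sigma^j(G_2,\BZ)$, then for $\chi:=\chi_1\circ\pi_1+\chi_2\circ\pi_2$ (which is non-zero because $\chi_1,\chi_2$ are) one has $[\chi]\in\Sigma^{i+j+1}(G_1\times G_2,\BZ)$.

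First I would invoke the Bieri--Renz chain-endomorphism criterion: $[\psi]\in\Sigma^m(G,\BZ)$ if and only if some (equivalently, any) finitely generated free $\BZ G$-resolution $F_*\to\BZ$ admits a chain endomorphism $\tau$ lifting $\mathrm{id}_{\BZ}$ that shifts the $\psi$-valuation of every basis element in degrees $\leq m$ by some uniform $\epsilon>0$. Applying this to the hypotheses, I fix finitely generated free resolutions $P_*\to\BZ$ over $\BZ G_1$ and $Q_*\to\BZ$ over $\BZ G_2$, together with chain endomorphisms $\sigma$ and $\tau$ shifting $\chi_1$- and $\chi_2$-valuations by uniform $\epsilon_1,\epsilon_2>0$ in degrees $\leq i$ and $\leq j$ respectively. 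Since $G_1,G_2$ are of type $\FP_{d+1}$ and $i+j+1\leq d+1$, I may take $P_k$ and $Q_\ell$ finitely generated free in every degree $\leq d+1$.

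Next, I would form the tensor product complex $R_*=P_*\otimes_{\BZ}Q_*$; by the K\"unneth formula this is a free $\BZ(G_1\times G_2)$-resolution of $\BZ$ with finitely generated terms in every degree $\leq i+j+1$. The crucial combinatorial observation is that a summand $P_k\otimes Q_\ell$ of $R_n$ with $n\leq i+j+1$ cannot simultaneously satisfy $k>i$ and $\ell>j$, since that would force $n=k+\ell\geq i+j+2$. Using this, I would build a chain endomorphism $\Phi$ of $R_{\leq i+j+1}$ lifting $\mathrm{id}_{\BZ}$ by combining $\sigma\otimes 1_Q$, $1_P\otimes\tau$, and $\sigma\otimes\tau$ on the appropriate summands; because $\chi(g_1,g_2)=\chi_1(g_1)+\chi_2(g_2)$, a direct computation shows that $\Phi$ shifts the $\chi$-valuation of every basis element in degrees $\leq i+j+1$ by at least $\min\{\epsilon_1,\epsilon_2\}>0$, and the Bieri--Renz criterion then yields $[\chi]\in\Sigma^{i+j+1}(G_1\times G_2,\BZ)$.

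The main obstacle is turning $\Phi$ into an honest chain map compatible with the tensor differential $d_P\otimes 1 \pm 1\otimes d_Q$, rather than merely a degree-wise assembly of operators with the correct valuation shift. The operators $\sigma\otimes 1_Q$, $1_P\otimes\tau$ and $\sigma\otimes\tau$ agree only up to chain homotopy, so $\Phi$ must be constructed inductively in $n$, adjusting at each step by a homotopy correction chosen to preserve the $\chi$-valuation shift. The finite generation of $P_*$ and $Q_*$ through degree $d+1$ (the point where the $\FP_{d+1}$ hypothesis on $G_1,G_2$ is used in full strength) provides the room to make these corrections without erasing the uniform lower bound on the valuation shift, completing the inductive construction and hence the proof.
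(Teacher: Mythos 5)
First, a point of comparison: the paper does not prove this lemma at all — it is quoted as a result of Meinert whose proof appears in Gehrke's paper (Lemma~9.1 there), so your attempt is a re-proof of a cited result. Your overall plan (Bieri--Renz valuation criterion, tensor product of free resolutions of the factors, and the observation that in degrees $\leq i+j+1$ every summand $P_k\otimes Q_\ell$ has $k\leq i$ or $\ell\leq j$) is in the right spirit and is broadly the kind of machinery Gehrke's argument runs on.

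However, as written there is a genuine gap precisely at the decisive step. The degreewise assembly of $\sigma\otimes 1$, $1\otimes\tau$ and $\sigma\otimes\tau$ is not a chain map: the differential $d_P\otimes 1\pm 1\otimes d_Q$ carries a summand on which you chose one operator (say $P_{i+1}\otimes Q_\ell$, where you are forced to use $1\otimes\tau$ because $\sigma$ has no valuation control above degree $i$) into summands on which you chose another, and the three operators agree only up to chain homotopy. You acknowledge this, but the proposed fix — build $\Phi$ inductively and ``adjust at each step by a homotopy correction chosen to preserve the $\chi$-valuation shift'' — is exactly the nontrivial content of the lemma and is asserted rather than proved. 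To solve $d\Phi_n(x)=\Phi_{n-1}(dx)$ with $v_\chi(\Phi_n(x))>v_\chi(x)$ you must fill a boundary of high valuation by a chain of high valuation, i.e.\ you need controlled acyclicity of the truncated tensor complex with respect to $v_\chi$; this does not follow from $P_*,Q_*$ being finitely generated through degree $d+1$ (finite generation only bounds how badly the uncontrolled maps behave, it yields no positive shift), nor can it be sidestepped by taking powers of $\sigma,\tau$, since those are equally uncontrolled in degrees $>i$, resp.\ $>j$. A correct completion has to use the shift endomorphisms themselves, together with homotopies manufactured from them, to recursively raise the valuation of the correction terms — this is the mechanism in the Bieri--Renz/Meinert/Gehrke arguments — and until that is carried out, your final paragraph assumes what is to be shown rather than proving $[\chi]\in\Sigma^{i+j+1}(G_1\times G_2,\BZ)$.
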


\begin{corollary} \label{meinert2} Let $G_1$ and $G_2$ be groups of
type $\FP_{\infty}$, and for $i\in \{1,2\}$ let
$\chi_i \in Hom(G_i, \BR) \setminus \{ 0 \}$.
Furthermore suppose
  that $[\chi_i]\in\Sigma^{\infty} (G_i, \BZ)$ for at least one $i\in
\{1,2\}.$

\noindent
Then for $G = G_1 \times G_2$ and
$\chi = (\chi_1,\chi_2) \in Hom (G, \BR)\setminus\{0\}$ we have that $[\chi]
\in \Sigma^{\infty}(G, \BZ)$.
\end{corollary}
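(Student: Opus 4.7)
The plan is to deduce the corollary as an immediate application of Meinert's Lemma \ref{meinert1}, once we observe two trivial facts: that $\Sigma^0(G_2,\Z)=S(G_2)$ (and similarly for $G_1$), and that under the identification of $\Hom(G_1\times G_2,\R)$ with $\Hom(G_1,\R)\oplus\Hom(G_2,\R)$ the character $\chi=(\chi_1,\chi_2)$ is simply $\pi_1^*\chi_1+\pi_2^*\chi_2$.

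Without loss of generality assume $[\chi_1]\in\Sigma^\infty(G_1,\Z)$, so that $[\chi_1]\in\Sigma^{m}(G_1,\Z)$ for every integer $m\geq 0$. Since the trivial $\Z(G_2)_{\chi_2}$-module $\Z$ is cyclic, hence finitely generated, it is of type $\FP_0$ over $\Z(G_2)_{\chi_2}$, so $[\chi_2]\in\Sigma^0(G_2,\Z)$.

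To prove $[\chi]\in\Sigma^\infty(G,\Z)$, fix an arbitrary $m\geq 1$ (the case $m=0$ is vacuous). Apply Lemma \ref{meinert1} with $d=m-1$, $i=m-1$, $j=0$, so that $i+j=m-1\leq d$. The hypothesis of $\FP_{d+1}=\FP_m$ on each $G_i$ is satisfied because $G_1,G_2$ are of type $\FP_\infty$; and the inclusions $[\chi_1]\in\Sigma^{m-1}(G_1,\Z)$ and $[\chi_2]\in\Sigma^0(G_2,\Z)$ hold by the previous paragraph. The conclusion of the lemma then gives
\[
[\chi]\;=\;\pi_1^*[\chi_1]+\pi_2^*[\chi_2]\;\in\;\Sigma^{m}(G_1\times G_2,\Z).
\]
As $m$ was arbitrary, $[\chi]\in\Sigma^\infty(G,\Z)$, as required.

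There is no substantive obstacle here; the only point one has to be careful about is the symmetric use of Meinert's estimate, namely that one is allowed to take $j=0$ because $\Sigma^0$ is the entire character sphere, which is precisely what converts the ``at least one of $\chi_1,\chi_2$'' hypothesis into a statement about $\chi$ via a single application of Lemma \ref{meinert1}.
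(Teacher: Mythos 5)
Your proof is correct and is exactly the derivation the paper intends (the corollary is stated there without proof, as an immediate consequence of Lemma \ref{meinert1}): apply the lemma with $i=m-1$, $j=0$, using that $\Sigma^0(G_2,\Z)=S(G_2)$ and that $\chi=\pi_1^*\chi_1+\pi_2^*\chi_2$, then let $m$ run over all integers. Your reading of the sum $\pi_1^*(\cdot)+\pi_2^*(\cdot)$ also matches the paper's own use of the lemma in Proposition \ref{direct1}, so nothing further is needed.
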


\begin{proposition} \label{meinertnew} \cite[Prop.~4.2]{Meinert1},
\cite[Thm.~4.3]{Meinert2} Let $H$ be a group of type $\FFF_m$ and $\varphi
\in End(H).$ Put $G = \langle H , t \mid t^{-1} H t =
\varphi(H) \rangle.$
Suppose  $\chi : G \to \BR$ is a real character such that $\chi(H) = 0$
and $\chi(t) = 1$. Then $-
[\chi] \in \Sigma^{m}(G, \BZ)$.
\end{proposition}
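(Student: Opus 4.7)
The approach is algebraic. By the definition of $\Sigma^m(G,\BZ)$ recalled in the preliminaries, it suffices to show that the trivial module $\BZ$ is of type $\FP_m$ over the monoid ring $\BZ G_{-\chi}$, where $G_{-\chi}=\{g\in G\mid\chi(g)\le 0\}$. The plan is to construct a partial free $\BZ G_{-\chi}$-resolution of $\BZ$ of type $\FP_m$ out of a partial $\BZ H$-resolution of $\BZ$ (which exists since $H$ is $\FFF_m$, and hence $\FP_m$) and the HNN mapping-torus style short exact sequence for the ascending HNN extension $G$.

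The defining relation $t^{-1}ht=\varphi(h)$ rewrites as $t^{-1}h=\varphi(h)t^{-1}$, so every element of $G_{-\chi}$ has a unique normal form $h\,t^{-k}$ with $h\in H$ and $k\ge 0$. Consequently $G_{-\chi}\cong H\rtimes_\varphi\BN$ as a monoid, $\BZ G_{-\chi}$ is the skew polynomial ring $\BZ H[t^{-1};\varphi]$ and is free as a left $\BZ H$-module with basis $\{t^{-k}\}_{k\ge 0}$, and the left cosets $H\backslash G_{-\chi}$ are canonically in bijection with $\BN$ via $Ht^{-k}\mapsto k$. Fix a partial free right $\BZ H$-resolution $F_m\to\cdots\to F_0\to\BZ\to 0$ with each $F_i$ finitely generated. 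Tensoring with $\BZ G_{-\chi}$ over $\BZ H$ and using the left-$\BZ H$-freeness yields a partial free right $\BZ G_{-\chi}$-resolution of
\[
A\;:=\;\BZ\otimes_{\BZ H}\BZ G_{-\chi}\;\cong\;\BZ[\BN]
\]
with finitely generated terms in each degree $\le m$. Moreover, left multiplication by $t^{-1}$ descends, via the identification $H\backslash G_{-\chi}\cong\BN$, to the successor shift on $\BZ[\BN]$, and one verifies that
\[
0\longrightarrow A\;\xrightarrow{\ (t^{-1}-1)\cdot\ }\;A\longrightarrow\BZ\longrightarrow 0
\]
is an exact sequence of right $\BZ G_{-\chi}$-modules (both injectivity and cokernel $=\BZ$ are immediate from the successor-shift description on $\BZ[\BN]$). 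Lift $(t^{-1}-1)\cdot\colon A\to A$ to a chain map $F_*\otimes_{\BZ H}\BZ G_{-\chi}\to F_*\otimes_{\BZ H}\BZ G_{-\chi}$ by the standard lifting lemma; the mapping cone of this chain map is then a partial free $\BZ G_{-\chi}$-resolution of $\BZ$, still finitely generated in all degrees $\le m$. This gives $\BZ\in\FP_m$ over $\BZ G_{-\chi}$, i.e.\ $-[\chi]\in\Sigma^m(G,\BZ)$.

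The main technical point is the short exact sequence: well-definedness, injectivity and cokernel of $(t^{-1}-1)\cdot$ on $A$. All three rest on the ascending nature of the HNN extension --- $\varphi$ being an endomorphism of \emph{all} of $H$ is exactly what makes $t^{-1}$ preserve $G_{-\chi}$ and gives the clean identification $H\backslash G_{-\chi}\cong\BN$ that turns the map into the standard successor-minus-identity on $\BZ[\BN]$. The rest of the argument (induction of the $F_*$-resolution along $\BZ H\hookrightarrow\BZ G_{-\chi}$ and the mapping cone splicing) is routine homological algebra.
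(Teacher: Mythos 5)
The paper itself gives no proof of this proposition (it is quoted from Meinert), so only your argument is at issue, and it has a genuine gap at the very first step. You claim that every element of $G_{-\chi}=\{g\in G\mid \chi(g)\le 0\}$ has the normal form $h\,t^{-k}$ with $h\in H$, $k\ge 0$, so that $G_{-\chi}\cong H\rtimes_\varphi\BN$, $\BZ G_{-\chi}=\BZ H[t^{-1};\varphi]$ is free as a left $\BZ H$-module on $\{t^{-k}\}$, and $H\backslash G_{-\chi}\cong\BN$. This is true only when $\varphi$ is surjective, i.e.\ when $G$ is just the semidirect product $H\rtimes\BZ$. In the properly ascending case $\varphi(H)\subsetneq H$ --- exactly the case this paper needs, since Proposition \ref{HNNext} produces a strictly ascending HNN extension --- the monoid $G_{-\chi}$ is strictly larger than the submonoid $M=Ht^{-\BN}$ generated by $H$ and $t^{-1}$: for $h\in H\setminus\varphi(H)$ the element $tht^{-2}$ satisfies $\chi(tht^{-2})=-1\le 0$ but, by Britton's lemma, is not of the form $h't^{-k}$ (in $\langle a,t\mid t^{-1}at=a^2\rangle$, for instance, $tat^{-2}=a^jt^{-1}$ would force $a=a^{2j}$). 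Consequently $\BZ G_{-\chi}$ is not the skew polynomial ring, $H\backslash G_{-\chi}$ is not $\BN$, and with the correct coset space the map $(t^{-1}-1)$ on $A=\BZ[H\backslash G_{-\chi}]$ has a large cokernel (one $\BZ$ for each family of cosets $Ht^aht^{-b}$ not meeting $Ht^{-\BN}$), so your short exact sequence, and with it the mapping-cone construction, breaks down.

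What your argument does establish is that $\BZ$ is $\FP_m$ over $\BZ M$ for the submonoid $M=Ht^{-\BN}$; but $-[\chi]\in\Sigma^m(G,\BZ)$ means $\FP_m$ over the full monoid ring $\BZ G_{-\chi}$, and the passage from $M$ to $G_{-\chi}$ is precisely where the content of Meinert's theorem lies. One has, for example, $G_{-\chi}=\bigcup_{k\ge 0}t^kMt^{-k}$, an ascending union of conjugates of $M$, and one must control the finiteness property along this limit; alternatively one argues with the action of $G$ on the Bass--Serre tree of the HNN extension. As written, your proof is only valid in the split case $\varphi\in Aut(H)$, and the sentence asserting the normal form in $G_{-\chi}$ is where it fails; note that your own closing remark attributes the key exactness to ``$\varphi$ being an endomorphism of all of $H$'', whereas what it actually requires is surjectivity of $\varphi$.
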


\subsection{$\Sigma^1$ for PL groups}

    Let $G$ be a finitely generated group of PL orientation preserving
homeomorphisms of an
interval $I = [y_1, y_2]\subset \R$  acting on the right.  Let
$\chi_1, \chi_2 : G
\to \BR$ be the characters given by $\chi_i(f) = log_m(y_i)f'$, where
$m>1$ is a
fixed  number.

\begin{theorem} \cite[Thm.~8.1]{BNS} \label{sigma1}  Assume that $G$ does
not
fix any point in the open interval $(y_1, y_2)$. Further assume that
$\chi_1 (Ker (\chi_2)) = Im (\chi_1)$ and
$\chi_2 (Ker(\chi_1)) = Im (\chi_2).$ Then
    $$
    \Sigma^1(G,\BZ) = S(G) \setminus \{ [\chi_1], [\chi_2] \}.
    $$
    \end{theorem}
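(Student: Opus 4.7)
The plan is to establish both inclusions in $\Sigma^1(G,\BZ) = S(G) \setminus \{[\chi_1],[\chi_2]\}$, using throughout the standard criterion that $[\chi] \in \Sigma^1(G,\BZ)$ is equivalent to the connectedness (essentially, in the sense of Bieri--Neumann--Strebel) of the full Cayley subgraph of $G$ spanned by the submonoid $G_\chi$.

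First I would show the exclusion $[\chi_i] \notin \Sigma^1(G,\BZ)$. Since every element of $G$ is PL, the equation $\chi_1(f)=0$ forces $f$ to agree with the identity on some neighbourhood of $y_1$. Thus the monoid $G_{\chi_1}$ acts near $y_1$ only by strict expansion (or trivially), while any fixed finite generating set of $G$ contains only finitely many letters with positive $\chi_1$-contribution. Constructing a sequence of elements $f_k \in G_{\chi_1}$ whose local picture near $y_1$ is steadily more refined, one checks that any Cayley-path rearrangement of this local picture must pass through elements with negative $\chi_1$-value; hence the positive subgraph for $\chi_1$ is not connected and $[\chi_1] \notin \Sigma^1(G,\BZ)$. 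The case of $\chi_2$ is symmetric.

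For the reverse inclusion, given $[\chi] \notin \{[\chi_1],[\chi_2]\}$, I would use the two slope-image hypotheses to produce elements $h_1,h_2 \in G$ with $h_i \in \ker(\chi_{3-i})$ and $\chi(h_i)>0$: the hypothesis $\chi_i(\ker(\chi_j))=\im(\chi_i)$ lets me realise any value of $\chi_i$ while keeping $\chi_j$ trivial, and the condition $[\chi]\neq [\chi_i]$ allows adjustment within $\ker(\chi_{3-i})$ so that $\chi(h_i)$ becomes positive. The absence of fixed points in $(y_1,y_2)$ then allows me, together with the $h_i$ and a finite generating set of $G$, to join any element of $G$ to the identity by a path in the Cayley graph whose $\chi$-values remain non-negative, witnessing $[\chi] \in \Sigma^1(G,\BZ)$.

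The main obstacle is the second part. Producing the elements $h_i$ is straightforward from the hypotheses, but verifying that the combinatorial rearrangements producing the path never dip below $\chi=0$ requires a careful decomposition of each $g\in G$ into contributions supported near each endpoint and away from both; the two hypotheses are used precisely to allow the two ends of the interval to be handled independently, and the no-fixed-point assumption is what lets one slide supports from one end to the other through the interior. This is the technical heart of the argument in~\cite{BNS}.
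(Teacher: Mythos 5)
This statement is not proved in the paper at all: it is imported verbatim from Bieri--Neumann--Strebel \cite[Thm.~8.1]{BNS}, so there is no internal argument to measure your sketch against, and your own text ultimately defers ``the technical heart'' to that same reference. Judged as a standalone proof, your outline has genuine gaps in both directions. In the exclusion direction, your argument that $[\chi_1]\notin\Sigma^1(G,\BZ)$ uses only the PL structure (elements of $\ker(\chi_1)$ are the identity near $y_1$, elements of $G_{\chi_1}$ expand near $y_1$) and never invokes the hypotheses $\chi_1(\ker(\chi_2))=\im(\chi_1)$, $\chi_2(\ker(\chi_1))=\im(\chi_2)$. But those hypotheses are essential there: take $G=\langle g\rangle\cong\BZ$ for a single PL map with no interior fixed point and slope $>1$ at $y_1$; every step of your exclusion argument applies verbatim, yet $\Sigma^1(\BZ,\BZ)=S(\BZ)$, so $[\chi_1]\in\Sigma^1$. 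Your argument therefore proves too much, and the sentence ``any Cayley-path rearrangement \dots must pass through elements with negative $\chi_1$-value'' is precisely the assertion to be proved, not an argument; the real obstruction has to use the independence of the two endpoint characters.

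In the inclusion direction there is a concrete failure as well: for $[\chi]=[-\chi_1]$ (which is neither $[\chi_1]$ nor $[\chi_2]$ once the independence hypotheses hold), the element $h_2\in\ker(\chi_1)$ with $\chi(h_2)>0$ that you propose to construct cannot exist, since $\chi$ vanishes identically on $\ker(\chi_1)$; similarly for $[-\chi_2]$. So the scheme of ``handling the two ends independently'' cannot even start for the antipodes of the excluded points, and these directions require a separate mechanism (they are in fact the directions one typically treats via ascending HNN-type structure, in the spirit of Proposition \ref{meinertnew} and Proposition \ref{HNNext} of this paper, where $-[\mu_1]\in\Sigma^{\infty}$ is obtained from an ascending HNN decomposition). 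Finally, even for the generic directions, the verification that the rearranged Cayley paths never dip below $\chi=0$ -- which you correctly identify as the heart of the matter -- is left entirely to \cite{BNS}, so the proposal is a plausible reading of the BNS strategy rather than a proof.
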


\noindent
Except of the case $G = F = F_{2,\infty}$ \cite{BGK}, there are, untill now, no
known results about the higher dimensional
$\Sigma$-invariants  of groups $G$ of PL orientation preserving
homeomorphisms of closed intervals.

\subsection{Direct products formula in dimension 1}

In small dimensions, the homological $\Sigma$-invariants of direct product
of groups can easily  be calculated via the $\Sigma$-invariants of the
direct factors. There is a
version  of the direct product formula   for $\Sigma$-invariants with
coefficients in
a field  without  restriction of the dimension \cite{BG}. But we will not
make use of this result.

\noindent
For a group $G$ denote
$$\Sigma^1(G, \BZ)^c=S(G)\setminus\Sigma^1(G,\BZ).$$
The following proposition is the direct product formula in dimension 1.

\begin{proposition} \label{direct1}
$$\begin{aligned}
\Sigma^1(G_1& \times G_2, \BZ)^c
=\\
& \{ [( \chi_1,\chi_2)] \mid \hbox{ one of } \chi_1, \chi_2  \hbox{ is
zero and for } \chi_i \not= 0,
[\chi_i] \in \Sigma^1(G_i, \BZ)^c \}.\\
\end{aligned}$$
\end{proposition}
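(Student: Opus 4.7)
The plan is to prove the asserted equality by splitting into cases according to how many of $\chi_1, \chi_2$ vanish.

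If both $\chi_1 \ne 0$ and $\chi_2 \ne 0$, I would appeal directly to Lemma \ref{meinert1} with $i = j = 0$. Observe that $\Sigma^0(G, \BZ) = S(G)$ for every finitely generated group $G$, since the trivial module $\BZ$ is generated as a $\BZ G_\chi$-module by the element $1$. As $G_1$ and $G_2$ are finitely generated (this is implicit, since otherwise $\Sigma^1(G_1 \times G_2, \BZ)$ would not be defined), Meinert's lemma yields $\pi_1^*(S(G_1)) + \pi_2^*(S(G_2)) \subseteq \Sigma^1(G_1 \times G_2, \BZ)$. Unfolding the definitions of $\pi_i^*$ and the sum, this set is exactly $\{[(\chi_1, \chi_2)] : \chi_1 \ne 0, \chi_2 \ne 0\}$, so every such class lies in $\Sigma^1$, which accounts for one inclusion in the proposition.

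Now suppose $\chi_2 = 0$ and $\chi_1 \ne 0$ (the case $\chi_1 = 0$, $\chi_2 \ne 0$ is symmetric). Write $M_1 = (G_1)_{\chi_1}$ and $M = (G_1 \times G_2)_{(\chi_1, 0)} = M_1 \times G_2$. Under the natural identification $\BZ M \cong \BZ M_1 \otimes_\BZ \BZ G_2$, a direct computation gives the key identity $(m, g) - 1 = ((m, 1) - 1) \cdot (1, g) + ((1, g) - 1)$ for $m \in M_1$, $g \in G_2$, which shows that the augmentation ideal $I_M$ is generated, as a left $\BZ M$-module, by the union $I_{M_1} \cup I_{G_2}$. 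Since $G_2$ is finitely generated, $I_{G_2}$ is finitely generated over $\BZ G_2$; hence if $[\chi_1] \in \Sigma^1(G_1, \BZ)$ (equivalently, $I_{M_1}$ is finitely generated over $\BZ M_1$), then $I_M$ is finitely generated over $\BZ M$, giving $[(\chi_1, 0)] \in \Sigma^1(G_1 \times G_2, \BZ)$. For the converse, I would apply the ring homomorphism $\pi : \BZ M \to \BZ M_1$ obtained by augmenting the $\BZ G_2$ factor: this is the identity on $\BZ M_1 \subseteq \BZ M$ and sends $I_M$ onto $I_{M_1}$, so the image under $\pi$ of any finite $\BZ M$-generating set of $I_M$ is a finite $\BZ M_1$-generating set of $I_{M_1}$, forcing $[\chi_1] \in \Sigma^1(G_1, \BZ)$.

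Combining these statements yields the claimed description of the complement. There is no serious obstacle: the argument is essentially careful bookkeeping with augmentation ideals of monoid rings, once Lemma \ref{meinert1} has been invoked for the two-nonzero-characters case. The only step requiring any care is verifying that $I_M$ admits the stated module decomposition in terms of $I_{M_1}$ and $I_{G_2}$ as a left $\BZ M$-module (and not merely as an abelian group), for which the explicit identity above suffices.
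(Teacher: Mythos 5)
Your argument is correct and follows essentially the same route as the paper: Lemma \ref{meinert1} with $i=j=0$ (together with $\Sigma^0(G_i,\BZ)=S(G_i)$) disposes of the classes with both coordinates nonzero, and your augmentation-ideal bookkeeping for $[(\chi_1,0)]$, with the retraction $\BZ M\to\BZ M_1$ giving the converse, is precisely what the paper compresses into ``follows from the definition of $\Sigma^1$, using that $\FP_1$ behaves well for quotients''. The only cosmetic point is sidedness: the paper takes $\BZ$ as a trivial \emph{right} module, but since $(m,1)$ and $(1,g)$ commute in $M_1\times G_2$ your identity gives the generation statement on either side, so nothing is affected.
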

\begin{proof} We apply  Lemma \ref{meinert1}  to $i = j
= 0$ to get
$$\Sigma^1(G_1\times G_2,\BZ)^c\subseteq\{ [( \chi_1,
\chi_2)] \mid \hbox{ one of } \chi_1, \chi_2  \hbox{ is zero} \}.$$
The result follows easily from the definition of  $\Sigma^1$
taking also into account that the property  $\FP_1$ behaves well for
quotients.
\end{proof}

\section{Fixed points of automorphisms of finite order
}\label{sectiondirection}

\begin{lemma} \label{order2} Every non-identity finite order
automorphism $\varphi$  of
$F_{n, \infty}$  has order 2. Furthermore $\varphi(f) = h^{-1} f h$
for a unique
homeomorphism $h$ of the real line $\BR$, $h^2$ is the identity map
and $h$ is a
decreasing function, so there is an unique $t_0 \in \BR$ such that
$(t_0)h = t_0$.
\end{lemma}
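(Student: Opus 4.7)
The plan proceeds in four steps: appeal to the structure of $\mathrm{Aut}(F_{n,\infty})$ established in \cite{BrinGuzman} to realise $\varphi$ as conjugation by a unique homeomorphism $h$ of $\BR$; deduce $h^k=\mathrm{id}$ where $k$ is the order of $\varphi$; show that $h$ cannot be orientation-preserving, so that $h^2=\mathrm{id}$ and $\varphi$ has order $2$; and finally extract a unique fixed point $t_0$.

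First, by \cite{BrinGuzman} every automorphism of $F_{n,\infty}$ is induced by conjugation with a homeomorphism $h$ of $\BR$. The uniqueness of $h$ is equivalent to the triviality of the centraliser of $F_{n,\infty}$ in $\mathrm{Homeo}(\BR)$, which one sees from the fact that $F_{n,\infty}$ contains compactly supported PL homeomorphisms moving every $n$-adic point in $\BR$: any homeomorphism commuting with all of them must fix a dense set of points and so must be the identity. Consequently the relation $\varphi^k=\mathrm{id}$ translates into $h^k=\mathrm{id}$.

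Next, I would record the elementary observation that an \emph{increasing} homeomorphism $g$ of $\BR$ with $g^k=\mathrm{id}$ must itself be trivial: if $(x_0)g>x_0$ then monotonicity gives $(x_0)g^m>x_0$ for every $m\geq 1$, and similarly if $(x_0)g<x_0$. Applied to $h$ this would force $h=\mathrm{id}$, contradicting $\varphi\neq\mathrm{id}$, so $h$ is decreasing. Then $h^2$ is an increasing homeomorphism satisfying $(h^2)^k=\mathrm{id}$, so by the same observation $h^2=\mathrm{id}$, and hence $\varphi$ has order $2$.

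Finally, since $h$ is a decreasing homeomorphism of $\BR$, the map $t\mapsto (t)h-t$ is continuous and strictly decreasing, with $\lim_{t\to-\infty}((t)h-t)=+\infty$ and $\lim_{t\to+\infty}((t)h-t)=-\infty$ (because $h$ reverses orientation on $\BR$); so it vanishes at a unique $t_0\in\BR$, yielding the unique fixed point of $h$. The main obstacle is really the bookkeeping around the Brin--Guzman theorem and the triviality of the centraliser; once those are in hand the remaining arguments are elementary real-variable analysis.
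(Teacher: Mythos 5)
Your proposal is correct and follows essentially the same route as the paper: cite Brin--Guzm\'an (with McCleary--Rubin) for the existence and uniqueness of the conjugating homeomorphism $h$, deduce $h^k=\mathrm{id}$, rule out increasing $h$ by the monotone-orbit argument, conclude $h^2=\mathrm{id}$, and obtain the unique fixed point from the strictly decreasing map $t\mapsto (t)h-t$. The only cosmetic difference is that you sketch the uniqueness of $h$ via triviality of the centraliser of $F_{n,\infty}$ in $\mathrm{Homeo}(\BR)$, whereas the paper simply cites the relevant lemmas of \cite{BrinGuzman}.
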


\begin{proof}
By \cite{McClearyRubin} and \cite[Lemma~6.1.1,~Lemma~6.1.2]{BrinGuzman},
for any $\varphi \in
Aut(F_{n,\infty})$ there is a unique homeomorphim $h$ of $\BR$ such
that \begin{equation}
\label{unique} \varphi (f) = h^{-1} f h.\end{equation}  In the case
when $n = 2$ it was
shown in \cite[Lemma~5.1]{Brin} that $h$ is piecewise linear but this
does not hold for
$n \geq 3$ \cite[Thm.~6.1.5]{BrinGuzman}. Now assume that $\varphi$
has  finite order,
say $k$. Then by the uniqueness of the conjugating homeomorphism  in
(\ref{unique}) we
deduce that $h^k = id$.

We remind the reader that, following the notation of \cite{BrinGuzman},
all
homeomorphisms of the real line act on the right.
For the   homeomorphism $h$ there are just two possibilities:
increasing or decreasing.
In the first case consider $x_0$ such that $x_0 \not= (x_0)h$.   If
$x_0 < (x_0)h$ since
$h$ is increasing
$x_0 < (x_0)h < (x_0) h^2 < \ldots <  (x_0) h^k$, giving a contradiction. If $x_0
>(x_0) h$, again using  that $h$ is increasing, $x_0 > (x_0) h > (x_0) h^2 >
\ldots > (x_0) h^k$, giving a contradiction as well. Then $h$ is decreasing and so
the function $(x)g =
(x)h - x$ is decreasing and there is an unique $t_0 \in \BR$ such that
$(t_0) g = 0$.
Finally, observe that $h^2$ is increasing and of finite order and
therefore $h^2=1$.
\end{proof}

\noindent
Using the notation of Lemma \ref{order2} we set
$$C_{F_{n, \infty}} (\varphi) = \{ f \in F_{n, \infty} \mid \varphi(f)
= f \}.$$

\begin{lemma}\label{fixed} $C_{F_{n, \infty}}(\varphi)$ fixes the point
$t_0$.
\end{lemma}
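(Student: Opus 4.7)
The plan is to exploit the commutation relation between $f$ and $h$ forced by the hypothesis $\varphi(f)=f$, and then use uniqueness of the fixed point of $h$ established in Lemma \ref{order2}.

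First I would take an arbitrary $f \in C_{F_{n,\infty}}(\varphi)$. By definition, $\varphi(f) = f$, and by Lemma \ref{order2} this means $h^{-1} f h = f$, equivalently $fh = hf$ as homeomorphisms of $\BR$ (acting on the right).

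Next I would evaluate both sides of $fh=hf$ at the point $t_0$. Remembering that the action is on the right, we get
\[
((t_0)f)h \;=\; (t_0)(fh) \;=\; (t_0)(hf) \;=\; ((t_0)h)f \;=\; (t_0)f,
\]
where the last equality uses that $(t_0)h = t_0$. Thus $(t_0)f$ is a fixed point of $h$.

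Finally, Lemma \ref{order2} guarantees that $t_0$ is the \emph{unique} fixed point of $h$ in $\BR$, so $(t_0)f = t_0$, as required. The only potential pitfall is keeping track of the right action convention consistently when manipulating $h^{-1}fh$, but no nontrivial obstacle arises.
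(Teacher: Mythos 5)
Your proof is correct and is essentially identical to the paper's own argument: both use $fh=hf$, evaluate at $t_0$ using $(t_0)h=t_0$ to see that $(t_0)f$ is a fixed point of $h$, and conclude by the uniqueness of the fixed point from Lemma \ref{order2}.
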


\begin{proof} If $f \in C_{F_{n, \infty}}(\varphi)$ we have $h f = f h$.
So $(t_0)f h = (t_0) h f = (t_0) f$. Since by Lemma \ref{order2}  $t_0$ is
the unique fixed
point of $h$, we deduce that $(t_0) f = t_0$.
\end{proof}

\begin{proposition} \label{reduction1} If $ t_0 \in \BZ[{1 \over n}]$
then $C_{F_{n,\infty}}(\varphi)$ is of type $\FFF_{\infty}$.
\end{proposition}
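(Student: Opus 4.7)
The plan is to show that $C_{F_{n,\infty}}(\varphi)$ is isomorphic to a copy of $F_{n,0}$, and hence of type $\FFF_\infty$. By Lemma \ref{fixed}, the centraliser is contained in the isotropy subgroup
\[
G_{t_0} := \{ f \in F_{n, \infty} \mid (t_0)f = t_0 \}.
\]
Because $t_0 \in \BZ[\tfrac{1}{n}]$, any $f \in G_{t_0}$ can be cut at $t_0$ into its restrictions to $(-\infty,t_0]$ and $[t_0,\infty)$, each extended by the identity, and both pieces are again legal elements of $F_{n,\infty}$ (the point $t_0$ is now an admissible breakpoint). This yields an internal direct product decomposition $G_{t_0}=A\cdot B$, where $A$ is the subgroup of $F_{n,\infty}$ of elements supported on $(-\infty,t_0]$, and $B=F_{n,t_0}$.

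Next I analyse how $\varphi$ acts on this decomposition. Since $h$ is a decreasing involution with unique fixed point $t_0$, conjugation by $h$ sends any element supported on one side of $t_0$ to one supported on the other side, and so $\varphi$ restricts to mutually inverse isomorphisms $\varphi|_A:A\to B$ and $\varphi|_B:B\to A$. A short direct calculation then shows that an element $ab\in A\cdot B$ is $\varphi$-fixed if and only if $b=\varphi(a)$, so the assignment $a\mapsto a\cdot\varphi(a)$ is a group isomorphism from $A$ onto $C_{F_{n,\infty}}(\varphi)$.

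It remains to show that $A$ is of type $\FFF_\infty$. For this I use the orientation-reversing involution $\sigma(x)=2t_0-x$ of $\BR$. The hypothesis $t_0\in\BZ[\tfrac{1}{n}]$ is precisely what is needed for conjugation by $\sigma$ to preserve all the defining data of $F_{n,\infty}$: breakpoints in $\BZ[\tfrac{1}{n}]$ remain in $\BZ[\tfrac{1}{n}]$, slopes (being integer powers of $n$) are unchanged, and the two asymptotic translation constants are merely negated and swapped between $\pm\infty$. Moreover $\sigma$ swaps the two half-lines meeting at $t_0$, so conjugation by $\sigma$ provides an isomorphism $A\simeq F_{n,t_0}$. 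Combined with Lemma \ref{isomorphism} this gives $A\simeq F_{n,0}$, which is of type $\FFF_\infty$, and the proposition follows.

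The only non-formal point to verify is the compatibility of the reflection $\sigma$ with the defining data of $F_{n,\infty}$, and this is exactly where the hypothesis $t_0\in\BZ[\tfrac{1}{n}]$ enters. No Sigma-theoretic input is needed at this stage; the genuinely harder case $t_0\notin\BZ[\tfrac{1}{n}]$ is the one that will require the machinery set up in the preliminaries and treated in the subsequent sections.
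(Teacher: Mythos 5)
Your argument is correct and is essentially the paper's own proof in different packaging: the paper's map $\Theta$ (restriction to $[t_0,\infty)$ extended by the identity, with inverse built from $hfh$ on $(-\infty,t_0]$) is, up to composing with $\varphi|_B$, exactly the inverse of your diagonal map $a\mapsto a\varphi(a)$, and both rest on the same point that $t_0\in\BZ[\frac1n]$ allows cutting and regluing at $t_0$, giving $C_{F_{n,\infty}}(\varphi)\cong F_{n,t_0}\cong F_{n,0}$. The only genuine difference is your final reflection $\sigma(x)=2t_0-x$, which is correct but redundant, since $\varphi|_A$ already identifies $A$ with $B=F_{n,t_0}$ and Lemma \ref{isomorphism} then finishes the proof.
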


\begin{proof}
We claim that
$$  C_{F_{n, \infty}}(\varphi) \cong F_{n, t_0}.$$
Consider the map $\Theta: C_{F_{n, \infty}}(\varphi) \to F_{n, t_0}$
defined by $\Theta(f) =\tilde f$, where
$\tilde{f}$ coincides
with $f$ on the interval $[t_0, \infty)$ and is the  identity on
$(-\infty, t_0]$.  The condition $fh = hf$ implies that $\Theta$ is
injective. Given $f \in F_{n,t_0}$, we can find an element $\hat f \in
F_{n,\infty}$ satisfying $\varphi(\hat f)=h\hat fh=\hat f$ and
$\Theta(\hat f) =f$ as follows:
$$(t)\hat f =\begin{cases} (t)f \mbox{ for } t>t_0 \\
(t)hfh \mbox{ for } t\leq t_0. \end{cases} $$
Here we have used  that $h$ is decreasing of order 2 and is fixing $t_0.$

\noindent To finish the proof we use that Lemma
\ref{isomorphism}  shows that $F_{n,t_0}$ is of type $\FFF_{\infty}$.
\end{proof}

\medskip\noindent
In the rest of this section we complete the proof  of one of the
directions of Theorem
A, part (2), see Corollary \ref{importantcor}.  More precisely, we show
that if there is some $f\in F_{n,\infty}$ such that $(t_0)f=t_0$ and
the right-hand slope of $f$ at $t_0$ is not 1, then
$C_{F_{n,\infty}}(\varphi)$ is of type
$\FP_\infty$. By Proposition \ref{reduction1} we can assume that
$$
t_0 \notin \BZ[{1 \over n}].
$$
Note that  \cite[Lemma~5.1]{Brin} implies that for $n = 2$ the homeomorphism  $h$
is PL. Together with some technical results proved independently later on, see the first part of Theorem \ref{thmclasses}, this  implies that for $n
=2$ we have  $t_0 \in
\BZ[{1 \over n}].$
In what follows we can therefore assume  that
$$
n \geq 3.
$$

\medskip\noindent
{\bf Definition.} Let $I = [\alpha, \beta]$ be an interval in $\BR$. By
some abuse of notation we also include the cases $\alpha=-\infty$ and
$\beta =\infty.$
We set
$$
F_{n,I} = \{ f \mid  f \hbox{ is the restriction to }I  \hbox{ of }
\tilde{f} \in F_{n, \infty}, \tilde{f}(\alpha) =\alpha,\tilde{f}(\beta) =
\beta\}.$$

\noindent In this section we consider the following condition:
\begin{equation}  \begin{array}{c} \hbox{ There is an element }\hat{f} \in
F_{n, \infty}, \hbox{such that }(t_0)\hat{f} = t_0  \\ \hbox{ and the
slope of } \hat{f} \hbox{ at }t_0 \hbox{  is not 1.}\end{array}
\label{condition}
\end{equation}

\noindent Moreover by considering either $\hat{f}$ or $\hat{f}^{-1},$
we may assume $log_n((t_0)\hat{f}') < 0$ and  that $ \mid
log_n((t_0)\hat{f}' )\mid$ is as small as possible, that is, the  slope at
$t_0$ is
smaller than 1 but as close as possible to 1. Throughout this section we
write $\hat f$ for this element.

\noindent
We also consider the characters $\mu_1, \mu_2 : F_{n, [t_0, \infty]}
\to \BR$ given by
$$\begin{array}{ll} \mu_1(f) = log_n
((t_0) f') &\hbox{ and } \\ \mu_2(f) = - i,& \end{array} $$ where $(x) f = x
+ i (n-1)$
for sufficiently large $x$.

\begin{proposition} \label{HNNext} Suppose that (\ref{condition}) holds.
Then the group $F_{n, [t_0, \infty]}$ is an ascending HNN extension
with a base group
isomorphic to $F_{n,\infty}$. In particular, $F_{n, [t_0, \infty]}$ is of
type
$\FFF_{\infty}$.
Similarly $F_{n, [ - \infty, t_0]}$ is of type $\FFF_{\infty}$.
\end{proposition}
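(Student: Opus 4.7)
The plan is to realize $F_{n,[t_0,\infty]}$ as an ascending HNN extension with base isomorphic to $F_{n,\infty}$ and stable letter $\hat f$; the $\FFF_{\infty}$ conclusion then follows from the standard fact that an ascending HNN extension of a group of type $\FFF_{\infty}$ is again of type $\FFF_{\infty}$. The crucial geometric input is that, since $t_0\notin\BZ[\frac{1}{n}]$ while the break points of $\hat f$ lie in $\BZ[\frac{1}{n}]$, the point $t_0$ is not a break point of $\hat f$, so $\hat f$ is linear with slope $c=n^{-m}<1$ on some right neighborhood $[t_0,t_0+r]$ of $t_0$ and is therefore a strict contraction toward $t_0$ there. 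Pick $t_1\in\BZ[\frac{1}{n}]\cap(t_0,t_0+r)$ and take as base $B:=F_{n,[t_1,\infty]}$, embedded in $F_{n,[t_0,\infty]}$ via extension by the identity on $[t_0,t_1]$; this extension is legitimate because $t_1\in\BZ[\frac{1}{n}]$, and by Lemma~\ref{isomorphism} one has $B\cong F_{n,\infty}$.

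The next step is to verify that $\varphi(b):=\hat f b\hat f^{-1}$ defines an injective endomorphism $\varphi\colon B\to B$: if $b\in B$ is the identity on $[t_0,t_1]$, a direct computation shows $\varphi(b)$ is the identity on $[t_0,\hat f^{-1}(t_1)]$, which contains $[t_0,t_1]$ because $\hat f^{-1}(t_1)>t_1$ by the contraction property; hence $\varphi(b)\in B$, and we obtain the ascending chain $B\subseteq\hat f^{-1}B\hat f\subseteq\hat f^{-2}B\hat f^2\subseteq\cdots$. One identifies its union with $H:=\ker\mu_1$ as follows: given $h\in H$, the slope $1$ at $t_0$ together with the fact that $t_0$ is not a break point of $h$ (again because $t_0\notin\BZ[\frac{1}{n}]$) forces $h$ to be the identity on some $[t_0,t_0+\delta]$, and iterating $\hat f^{-1}$ inside the linear region grows this identity interval geometrically by a factor of $1/c$ at each step, eventually exceeding $t_1$, so that $\hat f^k h\hat f^{-k}\in B$ for $k$ large enough.

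Finally, the minimality of $|\mu_1(\hat f)|$ built into the choice of $\hat f$ ensures that $F_{n,[t_0,\infty]}/H\cong\BZ$ is generated by the class of $\hat f$, hence $F_{n,[t_0,\infty]}=H\rtimes\langle\hat f\rangle=\langle B,\hat f\rangle$. The universal property of ascending HNN extensions then identifies $F_{n,[t_0,\infty]}$ with $\langle B,s\mid s^{-1}bs=\varphi(b),\,b\in B\rangle$ via $s\mapsto\hat f^{-1}$, and the $\FFF_{\infty}$ conclusion follows from the standard preservation of $\FFF_{\infty}$ under ascending HNN extensions. The case of $F_{n,[-\infty,t_0]}$ is entirely symmetric, since $\hat f$ also has slope $c<1$ on some left neighborhood of $t_0$ and is therefore a contraction toward $t_0$ from the left as well. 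The main technical obstacle is the verification that $\bigcup_k\hat f^{-k}B\hat f^k$ exhausts $H$; this rests on $[t_0,t_1]$ lying strictly inside the basin of attraction of $t_0$ under $\hat f$, which is arranged by the choice $t_1\in(t_0,t_0+r)$.
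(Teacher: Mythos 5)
Your proposal is correct and is essentially the paper's own argument: the paper likewise takes as base the copy of $F_{n,\infty}$ consisting of the elements of $F_{n,[t_0,\infty]}$ that are the identity on $[t_0,r]$ for a suitable $r\in\BZ[\frac{1}{n}]$ inside the linearity region of $\hat f$, uses (the restriction of) $\hat f$ as stable letter, and identifies the ascending union of conjugates with $\ker\mu_1$, which together with the minimality of $|\mu_1(\hat f)|$ yields the ascending HNN structure and hence type $\FFF_\infty$. The differences are only cosmetic: you argue element-by-element that identity intervals expand under iterates of $\hat f^{-1}$, whereas the paper pushes the cut point down via $r_k=(r)\hat f^{\,k}\to t_0$.
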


\begin{proof}
Define $M$ to be the subgroup of $F_{n, [t_0, \infty]}$ containing those
elements with slope 1 at $t_0$, i.e., $M=\text{Ker} (\mu_1)$. Set $\widetilde f$ to
be the restriction
of $\hat f$ to $[t_0,\infty]$. Note that the fact that $\mu_1$ is a
discrete character
and the choice of $\widetilde f$ imply that $F_{n,
[t_0,\infty]}= \langle M,\widetilde f \rangle$.

Let $r$ be an element of $\BZ[{1 \over n}]$ such that $r > t_0$. We
identify $F_{n, r}$
    with its restriction on the interval $[t_0, \infty)$ i.e. we
identify $F_{n,r}$ with its
image
in $F_{n, [t_0, \infty]}$. Thus if $r_1 > r_2 > \ldots > r_i > \ldots
$ is a decreasing
sequence of elements of $\BZ[{1 \over n}]$ with limit $t_0$ we get that
$M = \cup_i F_{n, r_i}$.

Note that for $r$ as in the previous paragraph, with $r$ sufficiently
close to $t_0$, the
restriction of $\widetilde f$ on the interval $[t_0, r]$ is linear. 
Since the slope of
$\widetilde f$ at $t_0$ is smaller that 1, we have $t_0 = (t_0) \widetilde{f} <
(r)\widetilde f <  r$. Hence
\begin{equation} \label{99}
F_{n,r}^{\widetilde f} = \widetilde{f}^{-1} F_{n,r} {\widetilde f} = F_{n,
(r) \widetilde f} \supseteq F_{n, r},
\end{equation}
and also for $k \in \BN$
$$
F_{n,r}^{\widetilde f^{k}} = F_{n, (r)\widetilde f^k} \supseteq
F_{n,r}^{\widetilde f^{k-1}} = F_{n, (r)\widetilde f^{k-1}}\supseteq
\ldots
$$
Define $r_k = (r)\widetilde f^k$. Then  the sequence $\{ r_k \}$
satisfies the assumptions of the previous paragraph. Thus
$$
M = \cup_{k \geq 1} F_{n,r}^{\widetilde f^{k}}
$$
and by (\ref{99}) $F_{n, [t_0, \infty]}$ is an ascending HNN
extension with a base group
$F_{n,r}$ and stable letter $\widetilde f$. By Lemma \ref{isomorphism}
$F_{n,r}\cong
F_{n,\infty}$ which
is of type $\FFF_{\infty}$.
\end{proof}

\begin{lemma} \label{nonfix} The group $F_{n, [t_0, \infty]}$ does not
fix any element of
$(t_0, \infty)$.
\end{lemma}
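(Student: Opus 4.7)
The plan is to show that for every $s\in(t_0,\infty)$ some element of $F_{n,[t_0,\infty]}$ moves $s$. I would do this by an explicit ``bump'' construction whose support contains $s$ but stays strictly to the right of $t_0$.

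First I would observe that any $g\in F_{n,\infty}$ which is the identity on $(-\infty,r]$ for some $r>t_0$ automatically fixes $t_0$, hence restricts to an element of $F_{n,[t_0,\infty]}$. Using that $\BZ[\frac 1n]$ is dense in $\BR$, I would pick $r\in\BZ[\frac 1n]$ with $t_0<r<s$, and then $\epsilon\in\BZ[\frac 1n]_{>0}$ with $\epsilon>(s-r)/(n+1)$, so that $s<r+(n+1)\epsilon$. This reduces matters to exhibiting an element of $F_{n,r}$ that does not fix $s$.

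For the construction, I would define $g:\BR\to\BR$ to be the identity outside $[r,r+(n+1)\epsilon]$, linear of slope $n$ on $[r,r+\epsilon]$, and linear of slope $1/n$ on $[r+\epsilon,r+(n+1)\epsilon]$. The breakpoints $r,\ r+\epsilon,\ r+(n+1)\epsilon$ lie in $\BZ[\frac 1n]$, the slopes $n^{\pm 1}$ are integer powers of $n$, and continuity at the breakpoints is a routine check, so $g\in F_{n,\infty}$ (with vanishing asymptotic translations, as $g$ is the identity outside a compact set). A short calculation gives $g(x)-x=(n-1)(x-r)$ on $[r,r+\epsilon]$ and $g(x)-x=\frac{n-1}{n}(r+(n+1)\epsilon-x)$ on $[r+\epsilon,r+(n+1)\epsilon]$, so $g(x)>x$ strictly on the open interval $(r,r+(n+1)\epsilon)$; in particular $g(s)\neq s$.

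I do not anticipate a serious obstacle: the only point requiring a little care is the arithmetic verification that the bump function genuinely lies in $F_{n,\infty}$. It is worth noting that the argument never invokes condition~(\ref{condition}), so the statement actually holds without that hypothesis; this is consistent with its role of setting up the geometric hypothesis needed to apply Theorem~\ref{sigma1} to $G=F_{n,[t_0,\infty]}$ in the subsequent $\Sigma$-theoretic analysis.
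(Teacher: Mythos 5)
Your proof is correct and follows essentially the same route as the paper: both arguments pick $r\in\BZ[\frac 1n]$ with $t_0<r<s$ and use that elements of $F_{n,r}$, viewed inside $F_{n,[t_0,\infty]}$, move $s$. The only difference is that the paper simply cites the known fact that $F_{n,r}\cong F_{n,0}$ fixes no point of $(r,\infty)$, whereas you verify it with an explicit (and correct) bump function.
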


\begin{proof} Let $t_1 \in (t_0, \infty)$ and $r \in \BZ[{1 \over n}]
\cap (t_0, t_1)$.
As $F_{n,r}$ does not fix an element of $(r, \infty)$ and $F_{n,r}$
embeds in  $F_{n,
[t_0, \infty]},$ we  deduce that $F_{n, [t_0, \infty]}$ does not fix
$t_1$.
\end{proof}

\begin{lemma} \label{independent}
The characters $\mu_1, \mu_2 : F_{n, [t_0, \infty]} \to \BR$ satisfy
    $\mu_1(Ker (\mu_2)) = Im (\mu_1)$ and $\mu_2( Ker
(\mu_1)) = Im (\mu_2)$.
\end{lemma}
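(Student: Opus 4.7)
The plan is to exploit the fact that $\mu_1$ detects germs at the left end-point $t_0$, whereas $\mu_2$ detects the translation behaviour at $+\infty$, so these two pieces of data can be controlled independently by elements supported in complementary regions. In both equalities the inclusion $\mu_i(\ker\mu_j)\subseteq\im(\mu_i)$ is automatic; the work lies in showing the reverse inclusion, and for this a single auxiliary subgroup will serve both purposes.

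Since $\Z[1/n]$ is dense in $\R$, I would first fix some $r\in\Z[1/n]$ with $r>t_0$ and consider $F_{n,r}$ as a subgroup of $F_{n,[t_0,\infty]}$: every element of $F_{n,r}$ is the identity on $(-\infty,r]\supseteq(-\infty,t_0]$, in particular fixes $t_0$ with slope $1$, and so it lies in $\ker\mu_1$. To obtain $\mu_2(\ker\mu_1)\supseteq\im(\mu_2)$, I would argue that $\mu_2|_{F_{n,r}}$ is surjective onto $\im(\mu_2)=\Z$. This uses Lemma \ref{isomorphism}: the isomorphism $F_{n,r}\cong F_{n,0}$ is realised by conjugation with the translation $x\mapsto x+r$, which clearly preserves the value of the integer $i$ in the formula $(x)f=x+i(n-1)$ at $+\infty$, and $F_{n,0}\cong F_n$ admits the standard generator $x_0$ with $\mu_2(x_0)=-1$. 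Hence $\mu_2(F_{n,r})=\Z=\im(\mu_2)$, giving the first equality.

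For the second equality, given any $k\in\im(\mu_1)$ pick $f\in F_{n,[t_0,\infty]}$ with $\mu_1(f)=k$, and using the surjectivity established in the previous step choose $g\in F_{n,r}$ with $\mu_2(g)=-\mu_2(f)$. Since $g$ is the identity near $t_0$, chain rule gives $\mu_1(fg)=\mu_1(f)+\mu_1(g)=k+0=k$, while $\mu_2(fg)=\mu_2(f)+\mu_2(g)=0$. Thus $fg\in\ker\mu_2$ realises the value $k$, showing $\mu_1(\ker\mu_2)\supseteq\im(\mu_1)$.

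No serious obstacle is expected: the proof is essentially a bookkeeping argument that $\mu_1$ and $\mu_2$ depend on disjoint regions of the real line. The only point that requires verification is the surjectivity $\mu_2(F_{n,r})=\Z$, and this reduces to the well-known fact, via Lemma \ref{isomorphism}, that in $F_{n,0}\cong F_n$ the translation character at $+\infty$ is already surjective onto $\Z$.
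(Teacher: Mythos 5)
Your proof is correct and follows essentially the same route as the paper: the paper likewise manufactures elements that are the identity on a neighbourhood of $t_0$ but translate by a prescribed multiple of $n-1$ at $+\infty$ (its elements $f_i$, playing exactly the role of your elements of $F_{n,r}$), and multiplies them against elements with nontrivial slope at $t_0$ to decouple the two characters. The only minor difference is that you realise an arbitrary value $k$ of $\mu_1$ inside $\ker(\mu_2)$ directly, whereas the paper only exhibits the generator $\mu_1(\widetilde f)$ coming from the minimal choice of $\hat f$; your variant is marginally more self-contained but not a different method.
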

\begin{proof}
Again, let $\widetilde{f}$ be the restriction of $\hat{f}$ to $[t_0,
\infty)$. Choose for any $i\in\BZ$ an
$\hat{f}_i
\in F_{n, \infty}$ such that $
(x) \hat{f}_i = x + i (n-1)$ for sufficiently large $x$ and $(t_0)
\hat{f}_i = t_0$. This
is possible, as we can choose $\hat{f}_i$ to be the identity map on an
interval $I_i$
containing $t_0$  with end points in $\BZ[{1 \over n } ]$.
Denote by $f_i$ the
restriction of $\hat{f}_i$ to $[t_0, \infty]$. Then $\mu_1(f_1) = 0,$
$\mu_2(f_1) = -1$, i.e., $f_1\in\text{Ker} (\mu_1)$ and $\mu_2(f_1)$
generates
$\text{Im} (\mu_2)$. Moreover,
for any $i\in\BZ$
$$\mu_1(\widetilde{f} f_i) =
\mu_1(\widetilde{f}) + \mu_1({f}_i) = \mu_1(\widetilde{f})$$
and $$\mu_2(\widetilde{f} f_i) = \mu_2(\widetilde{f}) + \mu_2({f}_i) =
\mu_2(\widetilde{f}) - i.$$
So we may choose some $i$ with $\mu_2(\widetilde{f} f_i) = 0$,
but $\mu_1(\widetilde{f} f_i) = \mu_1(\widetilde{f})$ is a generator of
$\text{Im}(\mu_1)$ by the choice of
$\hat f$.
\end{proof}

\noindent
Without loss of generality  we can assume that $t_0 \in (0, 1)$.
Indeed, otherwise we can
interchange $h$ with another homeomorphism $g^{-1} h g$, where $(x) g = x
+
i$ for some $i \in \BZ$ to reduce to $t_0 \in [0, 1)$.  Since $t_0 \notin
\BZ [{1 \over n}],$ we have $t_0 \not= 0$.

\begin{lemma} \label{interval} The group $F_{n, [t_0, \infty]}$ has a
realization as a
subgroup of the group $H$ of  piecewise linear transformations of the
interval
$[t_0,n-1]$. 

\noindent
Furthermore, the characters $\chi_1$ and $\chi_2$ of $H$
given by $log_n$ of
derivatives at $t_0$ and $n-1$ correspond  to the characters $\mu_1,
\mu_2 : F_{n, [t_0,
\infty]} \to \BR$ given by $\mu_1(f) = log_n ((t_0)f')$ and $\mu_2(f)
= - i$ if at
infinity $(x)f = x + i (n-1)$.
\end{lemma}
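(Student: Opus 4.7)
The plan is to construct the realization via explicit conjugation by a piecewise affine bijection $\psi:[t_0,n-1]\to[t_0,\infty]$ that sends $n-1$ to $\infty$, and to show that $\Psi(f):=\psi^{-1}\circ f\circ\psi$ defines an injective group homomorphism from $F_{n,[t_0,\infty]}$ into $H$ satisfying the stated character correspondence.

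First, I would specify $\psi$ by its breakpoints. Put $a_k=(n-1)-(n-1-t_0)\,n^{-k}$ and $b_k=t_0+k(n-1)$ for $k\geq 0$, so that $a_0=b_0=t_0$, the sequence $\{a_k\}$ accumulates at $n-1$, and $b_k\to\infty$. Let $\psi$ map each $[a_k,a_{k+1}]$ affinely onto $[b_k,b_{k+1}]$; the slope on the $k$th piece is then $s_k=c\cdot n^k$ with $c=n/(n-1-t_0)$. This geometric pattern of breakpoints and the $n^k$-scaling of slopes is the crucial design feature.

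Next I would check that $\Psi(f)$ is a genuine PL homeomorphism of $[t_0,n-1]$ for every $f\in F_{n,[t_0,\infty]}$. Pick $i\in\BZ$ with $(x)f=x+i(n-1)$ for all sufficiently large $x$, so $\mu_2(f)=-i$. For $k$ large enough that $f$ acts as this translation on $[b_k,\infty)$, one has $\Psi(f)$ sends $a_k\mapsto a_{k+i}$, and the identity $a_{k+i}=(n-1)(1-n^{-i})+n^{-i}a_k$ shows that the potentially infinitely many breakpoints of $\psi^{-1}f\psi$ in the tail collapse into a single affine piece of slope $n^{-i}$ fixing $n-1$. Hence $\Psi(f)$ inherits only the finitely many breakpoints arising from the compactly supported portion of $f$, so $\Psi(f)\in H$. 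That $\Psi$ is an injective homomorphism is immediate from $\psi$ being a bijection. For the character correspondence, $\psi$ is affine near $t_0$, so conjugation preserves the $\log_n$-slope at $t_0$, giving $\chi_1\circ\Psi=\mu_1$; at $n-1$, the tail computation gives slope $n^{-i}=n^{\mu_2(f)}$, hence $\chi_2\circ\Psi=\mu_2$.

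The main obstacle is verifying the finiteness of breakpoints for $\Psi(f)$. It relies on the precise geometric spacing of $(a_k)$ (consecutive gaps scaled by $n^{-1}$) together with the slope pattern $s_k=c\cdot n^k$; both ingredients must conspire with the $n$-adic scaling inherent to $F_{n,\infty}$ so that the tail pieces of $\psi$ cancel under conjugation by an asymptotic translation by a multiple of $(n-1)$. This self-similar structure is the same one exploited in the ascending HNN decomposition of Proposition \ref{HNNext}.
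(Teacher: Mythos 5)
Your proposal is correct and follows the same essential route as the paper: conjugate $F_{n,[t_0,\infty]}$ by a self-similar PL homeomorphism that compactifies $+\infty$ to $n-1$, so that eventual translation by $i(n-1)$ becomes slope $n^{-i}$ at the right endpoint while the slope at $t_0$ is untouched. The only difference is that you build the conjugating map $\psi$ explicitly (with geometric breakpoints anchored at $t_0$) and verify the collapse of the tail breakpoints by hand, whereas the paper obtains it by restricting to $[t_0,\infty)$ the homeomorphism $\mu:[0,\infty)\to[0,n-1)$ of \cite[Lemma~2.3.1]{BrinGuzman}, which is the identity near $t_0$ since $t_0<1\leq n-2$.
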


\begin{proof}

By \cite[Lemma~2.3.1]{BrinGuzman} there is a PL homeomorphism $\mu$
between $[0, \infty)$
and $[0, n-1)$  inducing, by conjugation, an isomorphism $\mu^*$
between $F_{n, 0}$ and a subgroup  of $PL([0, n-1])$.
Since $t_0 \notin \BZ [{1 \over n}], $ we cannot directly apply  the
  isomorphism $\mu^*$ above. Since by definition $\mu$ is the identity on
$[0, n-2]$ and $t_0 < 1 \leq
n-2,$  the restriction of  $\mu$ to $[t_0, \infty)$ induces, via
conjugation, an isomorphism between
$F_{n, [t_0, \infty]}$ and a subgroup of $PL([t_0,n-1])$.
The behaviour at $+ \infty$ is explained in the second paragraph of the
proof of
\cite[Lemma~2.3.1]{BrinGuzman}: if $f \in F_{n, [t_0, \infty]}$ is
such that $(x)f = x + (n-1)i$ for sufficiently large $x$ then the image of
$f$ in
$PL([t_0,n-1])$ has derivative at $n-1$ equal to $n^{-i}$.
\end{proof}

\begin{corollary} \label{sigma200} If (\ref{condition}) holds then
$\Sigma^1( F_{n, [t_0,
\infty]})^c = \{ [\mu_1], [\mu_2] \}$.
\end{corollary}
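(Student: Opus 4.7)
The plan is to deduce Corollary \ref{sigma200} by a direct application of the Bieri--Neumann--Strebel theorem (Theorem \ref{sigma1}) to the realization of $F_{n, [t_0, \infty]}$ as a group of PL homeomorphisms of the closed interval $[t_0, n-1]$ furnished by Lemma \ref{interval}. All non-trivial content has been placed in the preceding lemmas, so what remains is a matter of checking that the hypotheses of Theorem \ref{sigma1} are met and transporting the conclusion back along the isomorphism of Lemma \ref{interval}.

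First I would note that, under condition (\ref{condition}), Proposition \ref{HNNext} shows $F_{n, [t_0, \infty]}$ is of type $\FFF_\infty$; in particular it is finitely generated, so its PL realization $H\leq \mathrm{PL}([t_0, n-1])$ is a finitely generated group of orientation-preserving PL homeomorphisms, as required by Theorem \ref{sigma1}.

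Next I would verify the two hypotheses of Theorem \ref{sigma1}. For the non-fixed-point condition, let $t_1\in (t_0, n-1)$. Since the PL homeomorphism $\mu:[0,\infty)\to [0,n-1)$ of \cite[Lemma~2.3.1]{BrinGuzman} used in the proof of Lemma \ref{interval} is a bijection, $H$ fixes $t_1$ if and only if $F_{n,[t_0,\infty]}$ fixes $\mu^{-1}(t_1)\in (t_0,\infty)$; by Lemma \ref{nonfix} this does not happen. For the condition on the character images, Lemma \ref{interval} identifies the boundary characters $\chi_1,\chi_2$ of $H$ (given by $\log_n$ of the derivatives at $t_0$ and $n-1$) with the characters $\mu_1,\mu_2$ of $F_{n,[t_0,\infty]}$, and Lemma \ref{independent} provides exactly
\[
\mu_1(\ker \mu_2)=\mathrm{Im}(\mu_1),\qquad \mu_2(\ker \mu_1)=\mathrm{Im}(\mu_2).
\]

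Applying Theorem \ref{sigma1} to $H$ therefore gives $\Sigma^1(H,\BZ)=S(H)\setminus\{[\chi_1],[\chi_2]\}$. Since the isomorphism of Lemma \ref{interval} identifies character spheres and carries $\chi_i$ to $\mu_i$, this translates to $\Sigma^1(F_{n,[t_0,\infty]},\BZ)^c=\{[\mu_1],[\mu_2]\}$, which is the desired conclusion. Lemma \ref{independent} also guarantees that $[\mu_1]$ and $[\mu_2]$ are genuinely distinct nonzero elements of $S(F_{n,[t_0,\infty]})$, so the right-hand side indeed has two elements. There is no real obstacle here: the heavy lifting has already been done in Lemmas \ref{nonfix}, \ref{independent} and \ref{interval} and in Proposition \ref{HNNext}, and the role of the corollary is simply to assemble them.
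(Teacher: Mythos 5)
Your argument is correct and coincides with the paper's own proof, which likewise obtains the result by applying Theorem \ref{sigma1} to the PL realization from Lemma \ref{interval}, with the hypotheses supplied by Lemma \ref{nonfix} and Lemma \ref{independent}. The only difference is that you spell out the routine verifications (finite generation via Proposition \ref{HNNext} and the transport of the non-fixed-point condition along the conjugating homeomorphism) that the paper leaves implicit.
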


\begin{proof} Follows directly from Theorem \ref{sigma1}, Lemma
\ref{nonfix}, Lemma
\ref{independent} and Lemma \ref{interval}.
\end{proof}

\noindent
Let $\nu_1, \nu_2 : F_{n, [-\infty, t_0]} \to \BR$ be  characters
defined as follows:
$$\nu_1(f) = log_n ((t_0)f')$$
and
$$\nu_2(f) = - i \mbox{ where }  (x)f = x + i (n-1) \mbox{ for } x
< - M$$
for some positive number $M$ depending on $f$.

\noindent
The proof of the following result is the same as the proof of the previous
corollary.

\begin{corollary} \label{sigma211} If (\ref{condition}) holds then
$\Sigma^1( F_{n, [-
\infty, t_0]})^c = \{ [\nu_1], [\nu_2] \}$.
\end{corollary}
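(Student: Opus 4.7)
The plan is to mirror the proof of Corollary \ref{sigma200}: verify the hypotheses of Theorem \ref{sigma1} for $G := F_{n,[-\infty,t_0]}$ equipped with the characters $\nu_1, \nu_2$, after realizing $G$ as a subgroup of the PL group of a compact interval.

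The key observation that simplifies the transfer from right to left is that $t_0 \notin \BZ[{1 \over n}]$, so $t_0$ is never a break point of an element of $F_{n,\infty}$. In particular, the element $\hat{f}$ provided by (\ref{condition}) is linear in some two-sided neighborhood of $t_0$, so its left-hand slope at $t_0$ equals its (non-trivial) right-hand slope at $t_0$. Hence the restriction of $\hat{f}$ to $(-\infty, t_0]$ plays the role of $\hat{f}$ on the left side, giving an element of $G$ whose slope at $t_0$ is not $1$, and whose translation behavior at $-\infty$ can be read off from $\hat{f}$.

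I would then carry out the four items parallel to the proof of Corollary \ref{sigma200}: (i) the analogue of Lemma \ref{nonfix} — for $t_1 \in (-\infty, t_0)$ pick $r', r \in \BZ[{1 \over n}]$ with $r' < t_1 < r < t_0$, and observe that the elements of $F_{n,\infty}$ supported on $[r', r]$ form a subgroup of $G$ (isomorphic to a subgroup of a generalised Thompson group via Lemma \ref{isomorphism}) that does not fix $t_1$; (ii) the analogue of Lemma \ref{independent} — for every $i \in \BZ$ construct $f_i \in G$ which is the identity on a neighborhood of $t_0$ and satisfies $(x)f_i = x + i(n-1)$ for $x \ll 0$, and combine these with the restriction of $\hat{f}$ to $(-\infty, t_0]$ to produce elements certifying $\nu_1(\ker \nu_2) = \im \nu_1$ and $\nu_2(\ker \nu_1) = \im \nu_2$; (iii) the analogue of Lemma \ref{interval} — a PL homeomorphism from $(-\infty, 0]$ onto a bounded interval (the mirror image of \cite[Lemma~2.3.1]{BrinGuzman}, obtained by pre- and post-composing with reflections) conjugates $G$ into $PL([\alpha, t_0])$ for suitable $\alpha$, identifying $\nu_1$ and $\nu_2$ with $\log_n$ of the derivatives at the two endpoints.

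Feeding (i)--(iii) into Theorem \ref{sigma1} then yields $\Sigma^1(G, \BZ)^c = \{[\nu_1], [\nu_2]\}$. Conceptually the argument is just the mirror image of the right-hand case; the only point requiring genuine care is the bookkeeping in step (iii), namely producing the explicit left-sided PL rescaling and correctly identifying $\nu_2$, defined via translations at $-\infty$, with the derivative at the finite endpoint that $-\infty$ is mapped to.
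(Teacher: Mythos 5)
Your proposal is correct and is essentially the paper's own argument: the paper simply remarks that the proof is ``the same as the proof of the previous corollary,'' i.e.\ one verifies the left-hand analogues of Lemma \ref{nonfix}, Lemma \ref{independent} and Lemma \ref{interval} and feeds them into Theorem \ref{sigma1}, exactly as you outline. Your extra observation that $t_0\notin\BZ[\tfrac1n]$ (the standing assumption after Proposition \ref{reduction1}) makes $\hat f$ linear near $t_0$, so its restriction to $(-\infty,t_0]$ serves as the left-sided witness for (\ref{condition}), is precisely the implicit point needed to mirror the argument.
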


\noindent Set
$$
D = F_{n, [- \infty, t_0]} \times F_{n, [t_0,  \infty]}
$$
and consider the characters
$\tilde{\mu}_1, \tilde{\mu}_2 : D \to \BR$ that extend $\mu_1, \mu_2$
and are the zero
map on $ F_{n, [- \infty, t_0]}$. Similarly define $\tilde{\nu}_1,
\tilde{\nu}_2 : D \to
\BR$ to extend $\nu_1, \nu_2$ and are the zero map on $ F_{n, [t_0,
\infty]}$. Let
$\widetilde{\varphi}$ be the automorphism of $Homeo(\BR)$ sending $f$
to $h^{-1} f h$,
i.e. $\widetilde{\varphi}$ extends $\varphi$.

\begin{lemma}\label{autD} If (\ref{condition}) holds then
$\widetilde{\varphi}$ is an
automorphism of
$D = F_{n, [- \infty, t_0]} \times F_{n, [t_0,  \infty]}$ \end{lemma}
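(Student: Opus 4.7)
The plan is to realise $D$ as a subgroup of $\mathrm{Homeo}(\BR)$ via the natural ``gluing at $t_0$'' embedding, verify that $\widetilde\varphi$ sends this subgroup into itself, and use $h^2=\mathrm{id}$ to conclude that the restriction is an involution, hence an automorphism.

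First, for $(f_1,f_2)\in D$ I would define $\hat f\in\mathrm{Homeo}(\BR)$ by $\hat f|_{[-\infty,t_0]}=f_1$ and $\hat f|_{[t_0,\infty]}=f_2$. Both $f_i$ fix $t_0$, so this is well-defined and continuous at $t_0$; hence $(f_1,f_2)\mapsto\hat f$ is an injective group homomorphism $D\hookrightarrow\mathrm{Homeo}(\BR)$, and I identify $D$ with its image.

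Next, I would show $\widetilde\varphi(\hat f)\in D$. By Lemma~\ref{order2}, $h$ is a decreasing involution fixing only $t_0$, so it interchanges $[-\infty,t_0]$ with $[t_0,\infty]$. By the definition of $D$ each $f_i$ is the restriction of some $\tilde f_i\in F_{n,\infty}$ with $\tilde f_i(t_0)=t_0$. For $x\in[-\infty,t_0]$, using $h^{-1}=h$,
\[
(x)\,\widetilde\varphi(\hat f)=\bigl(((x)h)\hat f\bigr)h=\bigl(((x)h)f_2\bigr)h=\bigl(((x)h)\tilde f_2\bigr)h=(x)\,\varphi(\tilde f_2),
\]
where the second equality uses $(x)h\in[t_0,\infty]$. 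Since $\varphi$ is an automorphism of $F_{n,\infty}$, $\varphi(\tilde f_2)\in F_{n,\infty}$, and it fixes $t_0$ because both $\tilde f_2$ and $h$ do; hence $\varphi(\tilde f_2)|_{[-\infty,t_0]}\in F_{n,[-\infty,t_0]}$. The symmetric calculation on $[t_0,\infty]$ yields $\widetilde\varphi(\hat f)|_{[t_0,\infty]}=\varphi(\tilde f_1)|_{[t_0,\infty]}\in F_{n,[t_0,\infty]}$. Consequently $\widetilde\varphi(\hat f)\in D$. Since $\widetilde\varphi$ is a homomorphism of $\mathrm{Homeo}(\BR)$ with $\widetilde\varphi^2=\mathrm{id}$ (because $h^2=\mathrm{id}$), its restriction to $D$ is an involution, and in particular an automorphism of $D$.

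The one subtlety to keep in mind is that for $n\geq 3$ the homeomorphism $h$ need not be PL and, since $t_0\notin\BZ[1/n]$, the glued element $\hat f$ need not itself belong to $F_{n,\infty}$. The trick that circumvents this is to extend each half $f_i$ to a global element $\tilde f_i\in F_{n,\infty}$ and push it through $\varphi$ (which is only defined on $F_{n,\infty}$), restricting back only at the end; this is exactly what identifies each half of $\widetilde\varphi(\hat f)$ as an element of the required restriction group.
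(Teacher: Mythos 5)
Your proof is correct and rests on the same key idea as the paper's: extend each half of an element of $D$ to a global element of $F_{n,\infty}$ fixing $t_0$, use that the decreasing involution $h$ swaps the two half-lines so conjugation exchanges the two factors, and deduce $\widetilde{\varphi}(D)=D$ from $\widetilde{\varphi}^2=\mathrm{id}$. The only (harmless) difference is that you verify $\widetilde{\varphi}(\hat f)\in D$ directly for every element, whereas the paper first observes that $G=\{f\in F_{n,\infty}\mid (t_0)f=t_0\}=\ker(\tilde{\mu}_1-\tilde{\nu}_1)$ is automatically $\varphi$-invariant and, since $D/G\cong\BZ$, carries out essentially your computation only for a single generator of $D$ modulo $G$.
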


\begin{proof} Let $G$ be the subgroup of those elements of $D$ that
are differenciable at
$t_0$ i.e. $t_0$ is not a break point, so $G$ is the kernel of
$\tilde{\mu}_1 -
\tilde{\nu}_1$ and so $D / G \simeq \BZ$. Observe that  $$G = \{ f \in
F_{n, \infty} \mid
(t_0)f = t_0 \}.$$

\noindent
Since $(t_0) h = t_0$ the group $G$ is invariant under $\varphi$ i.e.
$\widetilde{\varphi}(G) = G$. To complete the proof it suffices to
take any element $f
\in D \setminus G$ that is a generator of $D / G \simeq \BZ$ and show that
$\widetilde{\varphi}(f) = h^{-1} f h \in D$. Then $\widetilde{\varphi}(D)
\subseteq D$ and as
$\widetilde{\varphi}^2 = id$ we get that $\widetilde{\varphi}(D) = D$.

Take $f$ that is the identity map on $(- \infty, t_0]$. Then
$\widetilde{\varphi}(f)$ is
the identity map on $[t_0, \infty)$. Note that $f$ coincides with
some $g \in F_{n,
\infty}$ on the interval $[t_0, \infty)$. Then
$\widetilde{\varphi}(f)$ and $\varphi(g)$
coincide on $(- \infty, t_0]$. Thus $\widetilde{\varphi}(f) \in D$.
\end{proof}

\begin{lemma}  \label{swaping0} If (\ref{condition}) holds then  the map
$\widetilde{\varphi}$ induces a map $\widetilde{\varphi}^* : S(D) \to
S(D)$ that swaps
$[\tilde{\mu}_i]$ with $[\tilde{\nu}_i]$ for $i = 1, 2$.
\end{lemma}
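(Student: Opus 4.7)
My plan is first to unpack the action of $\tilde\varphi$ on $D$, then to analyse each character separately. Since $h$ is a decreasing involution fixing $t_0$, for any $f_1 \in F_{n, [-\infty, t_0]}$ the conjugate $h^{-1} f_1 h$ is supported on $[t_0, \infty)$: the homeomorphism $h$ sends $(-\infty, t_0]$ bijectively onto $[t_0, \infty)$, on which $f_1$ is the identity. This yields mutually inverse isomorphisms $\phi : F_{n, [-\infty, t_0]} \to F_{n, [t_0, \infty]}$ and $\phi^{-1}$ given by appropriate restrictions of conjugation by $h$, with $\tilde\varphi(f_1, f_2) = (\phi^{-1}(f_2), \phi(f_1))$. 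Identifying $\Hom(D, \BR)$ with $\Hom(F_{n, [-\infty, t_0]}, \BR) \oplus \Hom(F_{n, [t_0, \infty]}, \BR)$, the induced map $\tilde\varphi^*$ sends a pair $(\chi_1, \chi_2)$ to $(\chi_2 \circ \phi,\, \chi_1 \circ \phi^{-1})$. Since $\tilde\mu_i$ corresponds to $(0, \mu_i)$ and $\tilde\nu_i$ to $(\nu_i, 0)$, the claim reduces to showing that $\mu_i \circ \phi$ is a positive scalar multiple of $\nu_i$ for $i = 1, 2$; the reverse assertion for $\nu_i$ then follows automatically from $(\tilde\varphi^*)^2 = \mathrm{id}$.

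For $i = 1$ I would introduce the strictly increasing homeomorphism $\psi : [0, \infty) \to [0, \infty)$ defined by $\psi(u) = h(t_0 - u) - t_0$, which records the local behaviour of $h$ near $t_0$. A chain-rule computation based on $(t)(h^{-1} f_1 h) = h(f_1(h^{-1}(t)))$ shows that the right-hand slope of $\phi(f_1)$ at $t_0$ equals $\lim_{u \to 0^+} \psi(\alpha u)/\psi(u)$, where $\alpha = (t_0)f_1' \in n^{\BZ}$ is the left-hand slope of $f_1$ at $t_0$. Because $\phi(f_1) \in F_{n, [t_0, \infty]}$ has slopes in $n^{\BZ}$, this limit must lie in $n^{\BZ}$. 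Applying this for an element with $\alpha = n$ produces an integer $p$ with $\psi(nu)/\psi(u) \to n^p$, and iteration forces $\psi(\alpha u)/\psi(u) \to \alpha^p$ for every $\alpha \in n^{\BZ}$; this gives $\mu_1 \circ \phi = p \cdot \nu_1$. Monotonicity of $\psi$ forces $p \geq 0$, and the fact that $\phi$ is an isomorphism while $\mu_1$ is surjective rules out $p = 0$.

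For $i = 2$ I would analyse the asymptotic behaviour of $h$ at $\pm\infty$. Conjugation by $h$ must map the translation $x \mapsto x + (n-1)$ into $F_{n, \infty}$, forcing $h$ to be asymptotically affine at each end with an integer slope coefficient: there exist positive integers $a, b$ such that $h(y + (n-1)) - h(y) = -a(n-1)$ for $y \gg 0$ and $= -b(n-1)$ for $y \ll 0$, modulo a bounded correction periodic with period $(n-1)$. Applying $h^2 = \mathrm{id}$ to the leading asymptotics yields $ab = 1$, whence $a = b = 1$. A direct calculation then shows that when $f_1$ has $-\infty$ translation $j(n-1)$, the element $\phi(f_1)$ satisfies $(\phi(f_1))(x) = x - j(n-1)$ for sufficiently large $x$, giving the desired proportionality between $\mu_2 \circ \phi$ and $\nu_2$. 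The main technical hurdle is the regular-variation step in the $i = 1$ case: extracting a single integer exponent $p$ governing all ratios $\psi(\alpha u)/\psi(u)$ for $\alpha \in n^{\BZ}$ from the piecewise assumption that each such ratio is a power of $n$, and doing so uniformly across all admissible $f_1$ realising a given slope; this is where the rigidity of $F_{n,\infty}$ (slopes constrained to $n^{\BZ}$) plays its decisive role against the potentially exotic nature of $h$.
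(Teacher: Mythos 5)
Your argument is correct in substance, but it is a genuinely different route from the paper's. The paper does not compute anything about the germ of $h$ at $t_0$ or its asymptotics at $\pm\infty$: it first shows (Corollaries \ref{sigma200} and \ref{sigma211}, via the Bieri--Neumann--Strebel Theorem \ref{sigma1} applied to each half, together with the dimension-one product formula, Proposition \ref{direct1}) that $\Sigma^1(D,\BZ)^c$ consists of exactly the four classes $[\tilde{\mu}_1],[\tilde{\mu}_2],[\tilde{\nu}_1],[\tilde{\nu}_2]$; since $\Sigma^1$ is an invariant of the group, any automorphism of $D$ permutes this four-element set, and the soft geometric observation that the decreasing involution $h$ exchanges the two ends of $\BR$ and the two sides of $t_0$ then pins down the permutation. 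This is precisely how the paper sidesteps the danger that an exotic (non-PL) $h$ might distort $\mu_1$ into a character not proportional to $\nu_1$. You instead prove the proportionality directly: for $i=1$ by a regular-variation/multiplicativity argument on $\psi(u)=h(t_0-u)-t_0$, exploiting that conjugates land back in $F_{n,[t_0,\infty]}$ and hence have slopes in $n^{\BZ}$ near $t_0$, and for $i=2$ by deriving exact quasi-periodicity of $h$ at the ends from $\varphi(t_{n-1})\in F_{n,\infty}$ and $h^2=\mathrm{id}$. Your route buys a self-contained, computation-level proof of this lemma that needs no $\Sigma$-theory at all (which is interesting, since the paper flags the BNS input as the crucial ingredient here); the paper's route is shorter, reuses machinery, and avoids any analysis of the germ of $h$. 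Both arguments need Lemma \ref{autD} to know that $h^{-1}f_1h$ restricted to $[t_0,\infty)$ lies in $F_{n,[t_0,\infty]}$ and not merely in $\mathrm{Homeo}[t_0,\infty)$; you use this tacitly, and you should cite it, since it is exactly what makes ``slopes in $n^{\BZ}$'' available.

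Two repairs are needed in your write-up, neither fatal. First, you cannot ``apply this for an element with $\alpha=n$'' nor conclude $\psi(\alpha u)/\psi(u)\to\alpha^p$ ``for every $\alpha\in n^{\BZ}$'': condition (\ref{condition}) only guarantees that the realized slopes at $t_0$ form a nontrivial subgroup $n^{d\BZ}$, and for unrealized $\alpha$ nothing forces the limit to exist. Run the argument on a generator $n^d$ of the realized subgroup, using multiplicativity of $\alpha\mapsto\lim_{u\to0^+}\psi(\alpha u)/\psi(u)$ (substitute $u\mapsto\beta u$); this gives $\mu_1\circ\phi$ equal to a positive rational multiple of $\nu_1$, which is all that is needed in $S(D)$. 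Second, in the $i=2$ step your computation actually yields $(\phi(f_1))(x)=x-j(n-1)$ at $+\infty$, i.e.\ $\mu_2\circ\phi=-\nu_2$ with the paper's literal definition $\nu_2(f)=-i$; the positive proportionality you assert holds for the genuine endpoint-slope character $f\mapsto +i$ at $-\infty$ (the one Theorem \ref{sigma1} actually produces), so this is a sign-convention mismatch with the paper's $\nu_2$ rather than a mathematical error, but ``giving the desired proportionality'' glosses over it and should be made explicit. Also, the relation $(y+(n-1))h=(y)h-a(n-1)$ holds exactly for $|y|$ large; no ``bounded periodic correction'' is involved.
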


\begin{proof} By Corollary \ref{sigma200}, Corollary \ref{sigma211} and
the direct product
formula in dimension 1, see Proposition \ref{direct1},  we have that
$\Sigma^1(D, \BZ)^c
$ is exactly the set $\{  \tilde{\mu}_i, \tilde{\nu}_i \}_{i = 1,2}$.
Any automorphism
of $D$ induces a map  that permutes the elements of $\Sigma^1(D, \BZ)^c $.

Since $\varphi$ is given by conjugation with a decreasing $h,$ we get that
$\widetilde{\varphi}^*$ permutes the characters  corresponding to $\pm
\infty$ i.e.
$\widetilde{\varphi}^*([\nu_2]) = [\mu_2]$ and $\widetilde{\varphi}^*$
permutes the
characters  corresponding to derivatives (right or left) at $t_0$ i.e
$\widetilde{\varphi}^*([\nu_1]) = [\mu_1]$ .
\end{proof}

\begin{corollary} \label{swaping} If (\ref{condition}) holds then
the map $\widetilde{\varphi}$ induces a map
$$\widetilde{\varphi}^{**}
: Hom(D, \BR) \to
Hom (D, \BR)$$
swapping $\tilde{\mu}_i$ with $\tilde{\nu}_i$ for $i = 1, 2$.
\end{corollary}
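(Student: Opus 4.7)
The induced map is defined as $\widetilde\varphi^{**}(\chi) = \chi \circ \widetilde\varphi$, which is a well-defined $\BR$-linear involution of $\Hom(D,\BR)$ because $\widetilde\varphi$ is an involutive automorphism of $D$ by Lemma \ref{autD} and the identity $\widetilde\varphi^2 = id$. By Lemma \ref{swaping0}, $\widetilde\varphi^{**}$ swaps the classes $[\tilde\mu_i]$ and $[\tilde\nu_i]$ in $S(D)$, so $\widetilde\varphi^{**}(\tilde\mu_i) = c_i \tilde\nu_i$ for some positive real $c_i > 0$, $i = 1, 2$. The goal is to verify that each $c_i$ equals $1$.

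The key observation is that, since $\widetilde\varphi$ is an automorphism of $D$, the image of $\chi \circ \widetilde\varphi$ as a subgroup of $\BR$ coincides with the image of $\chi$. Applied to $\tilde\mu_i$, this gives the equality of subgroups $c_i \cdot \tilde\nu_i(D) = \tilde\mu_i(D)$ of $\BR$. Since all slopes in $F_{n,\infty}$ are integer powers of $n$ and the translation constants at infinity are integer multiples of $n-1$, both $\tilde\mu_i(D)$ and $\tilde\nu_i(D)$ are nonzero subgroups of $\BZ$, say $a_i\BZ$ and $a_i'\BZ$ respectively. Hence $c_i = a_i/a_i'$, and it suffices to show $a_i = a_i'$.

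For $i=2$ this is immediate: the elements $\hat f_j$ constructed in the proof of Lemma \ref{independent} satisfy $\mu_2(\hat f_j) = -j$ for every $j \in \BZ$, so $\tilde\mu_2(D) = \BZ$, and a symmetric construction at $-\infty$ gives $\tilde\nu_2(D) = \BZ$. For $i=1$ we argue as follows: by a direct chain-rule computation using $h(t_0) = t_0$ and the fact that $h$ is decreasing, the factors involving $h'$ and $(h^{-1})'$ at $t_0$ cancel exactly, so conjugation by $h$ interchanges the right-hand and left-hand slopes at $t_0$ of any element of $F_{n,\infty}$ that fixes $t_0$. Since $\widetilde\varphi$ is an automorphism of $F_{n,\infty}$, the set of right-hand slopes at $t_0$ realised by elements of $F_{n,\infty}$ fixing $t_0$ coincides with the set of left-hand slopes at $t_0$, forcing $a_1 = a_1'$. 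The only subtle point of the argument is precisely this last chain-rule calculation for $i=1$, which is needed to pin down $c_1$ to be exactly $1$ rather than some positive rational.
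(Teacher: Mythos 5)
Your overall strategy is reasonable: pulling characters back along $\widetilde{\varphi}$, using Lemma \ref{swaping0} to write $\widetilde{\varphi}^{**}(\tilde{\mu}_i)=c_i\tilde{\nu}_i$ with $c_i>0$, and then comparing images, noting that precomposition with an automorphism of $D$ does not change the image of a character. That disposes of $i=2$ correctly, since both $\tilde\mu_2$ and $\tilde\nu_2$ are onto $\BZ$. However, the step you yourself flag as the crux — the chain-rule computation for $i=1$ — is a genuine gap. The map $h$ is only known to be a homeomorphism of the real line: for $n\geq 3$ it may be exotic (not PL), and nothing guarantees that $h$ has one-sided derivatives at $t_0$, let alone finite nonzero ones, so the assertion that ``the factors involving $h'$ and $(h^{-1})'$ at $t_0$ cancel exactly'' has no justification. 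Note moreover that if such a pointwise chain-rule computation were available, it would prove the equality of characters $\tilde\mu_1\circ\widetilde\varphi=\tilde\nu_1$ directly and would make Lemma \ref{swaping0} (and the $\Sigma$-theoretic input behind it) unnecessary; the whole point of the paper's argument is to avoid any regularity hypothesis on $h$.

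The fact you actually need, namely $\operatorname{Im}(\tilde\mu_1)=\operatorname{Im}(\tilde\nu_1)$, can be obtained without touching $h$ at all. In the relevant case one may assume $t_0\notin\BZ[{1\over n}]$ (otherwise Proposition \ref{reduction1} already applies), and break points of elements of $F_{n,\infty}$ lie in $\BZ[{1\over n}]$; hence any $\tilde f\in F_{n,\infty}$ with $(t_0)\tilde f=t_0$ is affine near $t_0$, its left and right slopes there coincide, and its two restrictions give elements of the two factors of $D$ on which $\nu_1$ and $\mu_1$ take the same value $\log_n((t_0)\tilde f')$. Thus both images equal the set of $\log_n$-slopes at $t_0$ of elements of $F_{n,\infty}$ fixing $t_0$, so $a_1=a_1'$ (and $a_1'\neq 0$ follows from condition (\ref{condition}) by restricting $\hat f$), which with your image argument gives $c_1=1$. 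With that substitution your proof closes, and it is genuinely different in flavour from the paper's, which pins down $r_1=r_2=1$ by evaluating the identity of characters $\widetilde{\varphi}^{**}(\tilde{\mu}_i)=r_i\tilde{\nu}_i$ on suitable elements and comparing derivatives at $t_0$ directly, rather than by comparing the images of the discrete characters.
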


\begin{proof} By Lemma \ref{swaping0} there are some positive real
numbers $r_1$ and
$r_2$ such that
    $\widetilde{\varphi}^{**}(\tilde{\mu}_1) = r_1 \tilde{\nu}_1$ and
$\widetilde{\varphi}^{**}(\tilde{\mu}_2) = r_2 \tilde{\nu}_2$.
    Then for every $f \in F_{n, [- \infty, t_0]}$ we have that $(t_0)
(h^{-1} f h)' =
\widetilde{\varphi}^{**}(\tilde{\mu}_1) (f) = r_1 \tilde{\nu}_1 (f) =
r_1 ((t_0) f') $
where derivatives mean right or left derivatives. Applying this to
the identity map $f$
we deduce that $r_1 = 1$.
Similarly $ r_2 = 1$.
    \end{proof}
    
The following corollary is not needed to establish the main result of
this section i.e.
Corollary
    \ref{importantcor}, but we include it as it follows easily from the
results on
$\Sigma$-theory developed in \cite{BieriRenz}.
\begin{corollary} If (\ref{condition}) holds then the group $G$ defined in
the proof of Lemma \ref{autD} is of type
$\FP_{\infty}$.
    \end{corollary}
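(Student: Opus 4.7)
The plan is to deduce the corollary from the Bieri--Renz criterion for normal subgroups with abelian quotient, together with the ingredients already developed in this section. First, by Proposition \ref{HNNext} both $F_{n,[-\infty,t_0]}$ and $F_{n,[t_0,\infty]}$ are of type $\FFF_\infty$, so $D$ is of type $\FP_\infty$. Setting $\chi_G := \tilde\mu_1 - \tilde\nu_1$, the proof of Lemma \ref{autD} shows that $G = \ker \chi_G$ and $D/G \cong \BZ$; hence, by Bieri--Renz, the claim reduces to proving that both $[\chi_G]$ and $[-\chi_G]$ lie in $\Sigma^\infty(D,\BZ)$.

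For $[-\chi_G]$ I would apply Proposition \ref{meinertnew} to the ascending HNN extension $F_{n,[t_0,\infty]} = \langle F_{n,r}, \widetilde f \rangle$ from Proposition \ref{HNNext}. The conjugation relation $\widetilde f^{-1} F_{n,r} \widetilde f \supseteq F_{n,r}$ means Meinert's hypothesis is matched by taking the stable letter to be $t = \widetilde f^{-1}$. Since $\mu_1$ vanishes on $F_{n,r}$ and $\mu_1(\widetilde f) < 0$, direct comparison identifies $\mu_1$ with a positive real multiple of the character sending $\widetilde f^{-1}$ to $-1$, and Proposition \ref{meinertnew} then yields $[-\mu_1] \in \Sigma^\infty(F_{n,[t_0,\infty]},\BZ)$. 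Because the restriction of $-\chi_G$ to the right-hand factor of $D$ is exactly $-\mu_1$, Corollary \ref{meinert2} (Meinert's direct product formula) promotes this to $[-\chi_G] \in \Sigma^\infty(D,\BZ)$.

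To obtain $[\chi_G] \in \Sigma^\infty(D,\BZ)$ I would exploit the automorphism $\widetilde\varphi$ of $D$ furnished by Lemma \ref{autD}. By Corollary \ref{swaping}, the induced map $\widetilde\varphi^{**}$ on $\Hom(D,\BR)$ interchanges $\tilde\mu_i$ with $\tilde\nu_i$, and therefore sends $\chi_G$ to $-\chi_G$; since $\Sigma^\infty(D,\BZ)$ is invariant under group automorphisms of $D$, the previous paragraph gives $[\chi_G] \in \Sigma^\infty(D,\BZ)$ as well, and Bieri--Renz closes the argument. The main bookkeeping point, in my view, is the sign issue when passing from Proposition \ref{HNNext} to Meinert's framework: the ascending direction in the former is opposite to that required by the latter, so one must switch to the inverse of the stable letter and be careful to extract $[-\mu_1]$ rather than $[\mu_1]$ as the class obtained from Proposition \ref{meinertnew}.
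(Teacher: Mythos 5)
Your argument is correct and shares its skeleton with the paper's proof: both feed the ascending HNN decomposition of Proposition \ref{HNNext} into Proposition \ref{meinertnew} to get $[-\mu_1]\in\Sigma^{\infty}(F_{n,[t_0,\infty]},\BZ)$, both pass to $D$ via Corollary \ref{meinert2}, and both finish with \cite[Thm.~B]{BieriRenz} applied to $G=\ker(\chi_G)$, $\chi_G=\tilde{\mu}_1-\tilde{\nu}_1$, $D/G\cong\BZ$. Where you genuinely diverge is in producing the second of the two classes $[\pm\chi_G]$: the paper runs the Meinert argument a second time on the left half-line to obtain $[-\nu_1]\in\Sigma^{\infty}(F_{n,[-\infty,t_0]},\BZ)$ and then applies Corollary \ref{meinert2} to $\chi_G=(-\nu_1,\mu_1)$ through its \emph{first} coordinate, whereas you transport $[-\chi_G]$ to $[\chi_G]$ by the order-two automorphism $\widetilde{\varphi}$ of $D$ (Lemma \ref{autD}, Corollary \ref{swaping}). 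Your route is slightly more economical, exploiting the symmetry given by $h$ instead of redoing the HNN analysis on $F_{n,[-\infty,t_0]}$, but it silently uses that $\Sigma^{\infty}(D,\BZ)$ is invariant under $Aut(D)$; the paper invokes such invariance only for $\Sigma^1$ (Lemma \ref{swaping0}). The fact is standard (an automorphism $\alpha$ restricts to a monoid isomorphism $D_{\chi\alpha}\to D_{\chi}$, so the property of $\BZ$ being $\FP_m$ transfers), but it should be stated; the paper's symmetric computation buys independence from it.

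One sign slip in your second paragraph: since $\mu_1$ vanishes on $F_{n,r}$ and $\mu_1(\widetilde f)<0$, while Meinert's stable letter is $t=\widetilde f^{-1}$ with $\chi(t)=1$, i.e. $\chi(\widetilde f)=-1$, the character $\mu_1$ is a positive multiple of the character sending $\widetilde f^{-1}$ to $+1$ (not to $-1$). Proposition \ref{meinertnew} then gives $[-\chi]=[-\mu_1]\in\Sigma^{\infty}$. As written, your identification would instead yield $[\mu_1]\in\Sigma^{\infty}$, which is impossible because $[\mu_1]\notin\Sigma^1(F_{n,[t_0,\infty]},\BZ)$ by Corollary \ref{sigma200}. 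Your final conclusion $[-\mu_1]\in\Sigma^{\infty}$ is nevertheless the correct one, and it is what the rest of your argument actually uses, so this is a wording error rather than a gap.
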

    \begin{proof}
By  Proposition \ref{HNNext} and Proposition \ref{meinertnew} $$- [\mu_1]
\in
\Sigma^{\infty} (F_{n,[t_0, \infty]}, \BZ)$$  and similarly $$- [\nu_1]
\in
\Sigma^{\infty}
(F_{n,[- \infty, t_0]}, \BZ).$$ Then by Corollary \ref{meinert2} the
images of both
$\tilde{\mu}_1 - \tilde{\nu}_1 = (- \nu_1,  \mu_1) \in Hom (D, \BR)$
and $- \tilde{\mu}_1
+ \tilde{\nu}_1 = ( \nu_1, - \mu_1) \in Hom (D, \BR)$ represent elements
of
$\Sigma^{\infty} (D, \BZ)$ i.e.
$ [(- \nu_1,  \mu_1)], [( \nu_1, - \mu_1)] \in \Sigma^{\infty} (D, \BZ)$.
Then by \cite[Thm.~B]{BieriRenz} $G = Ker( \tilde{\mu}_1 -
\tilde{\nu}_1)$ is of type
$\FP_{\infty}$.
\end{proof}
\begin{theorem} \label{finalcentral} If (\ref{condition}) holds then
$C_{F_{n,
\infty}}(\varphi) \simeq F_{n,[t_0, \infty]}$.
\end{theorem}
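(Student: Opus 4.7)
The plan is to show that the restriction map
\[
\Theta : C_{F_{n,\infty}}(\varphi) \longrightarrow F_{n,[t_0,\infty]}, \qquad \Theta(f) = f|_{[t_0,\infty)},
\]
is a group isomorphism. By Lemma \ref{fixed} every $f \in C_{F_{n,\infty}}(\varphi)$ fixes $t_0$, so $\Theta$ is well-defined, and it is visibly a homomorphism. For injectivity, suppose $\Theta(f)$ is the identity; then for any $x \leq t_0$ we have $(x)h \geq t_0$, so $((x)h)f = (x)h$. Combined with $fh = hf$ this gives $((x)f)h = (x)h$, and since $h$ is a bijection we conclude $(x)f = x$, so that $f$ is the identity on $\BR$.

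For surjectivity, given $g \in F_{n,[t_0,\infty]}$ I will construct $f \in F_{n,\infty}$ satisfying $\varphi(f) = f$ and $f|_{[t_0,\infty)} = g$. The natural candidate is
\[
(x)f = \begin{cases} (x)g & \mbox{if } x \geq t_0, \\ (x)(h^{-1} g h) & \mbox{if } x \leq t_0. \end{cases}
\]
Continuity at $t_0$ and the equation $\varphi(f) = f$ are immediate from $h^2 = \mathrm{id}$ and $(t_0)h = t_0$. To check that $f$ actually lies in $F_{n,\infty}$, pick a lift $\tilde g \in F_{n,\infty}$ of $g$ with $(t_0)\tilde g = t_0$; for $x \leq t_0$ we have $(x)h \geq t_0$, and therefore $((x)h)\tilde g = ((x)h)g$. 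This shows that $f$ coincides with $\varphi(\tilde g) \in F_{n,\infty}$ on $(-\infty, t_0]$. Consequently, on each of $(-\infty, t_0]$ and $[t_0, \infty)$ separately, $f$ is PL with breakpoints in $\BZ[\frac 1n]$ and slopes integer powers of $n$, and it has the correct affine behaviour $x \mapsto x + j(n-1)$ and $x \mapsto x + i(n-1)$ near $-\infty$ and $+\infty$.

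The remaining issue, which is the main point of the proof, is to rule out a breakpoint of $f$ at $t_0$: since $t_0 \notin \BZ[\frac 1n]$, the left and right slopes of $f$ at $t_0$ must agree. The right slope equals the slope of $g$ at $t_0$, say $n^a$, while the left slope equals the left slope of $\varphi(\tilde g)$ at $t_0$. This matching is exactly what Corollary \ref{swaping} provides: identifying $\tilde g$ with the pair $(\tilde g|_{(-\infty,t_0]}, g) \in D$, the identity $\tilde \nu_1 \circ \tilde{\varphi} = \tilde{\mu}_1$ yields that $\log_n$ of the left slope of $\varphi(\tilde g)$ at $t_0$ equals $a$. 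Hence $f$ is PL through $t_0$, so $f \in F_{n,\infty}$ and $\Theta(f) = g$, as required. This slope-matching step is exactly where hypothesis (\ref{condition}) enters the proof, via the $\Sigma$-theoretic computations of Corollaries \ref{sigma200} and \ref{sigma211}.
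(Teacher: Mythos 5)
Your proof is correct and follows essentially the same route as the paper: restricting to $[t_0,\infty)$, extending a given $g$ by conjugation with $h$ on the other half, and using Corollary \ref{swaping} (that $\widetilde{\varphi}^{**}$ swaps $\tilde\mu_1$ and $\tilde\nu_1$ exactly) to rule out a break point at $t_0\notin\BZ[\frac1n]$, which is precisely where condition (\ref{condition}) enters. Your write-up merely supplies details the paper leaves implicit (the explicit injectivity check and the comparison of $f$ with $\varphi(\tilde g)$ on $(-\infty,t_0]$).
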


\begin{proof}
Observe that
$$C_{F_{n, \infty}}(\varphi) = \{ f \in F_{n, \infty} | \varphi(f) = f
\} = \{ f \in F_{n, \infty} | hf = fh \}.$$

\noindent
Consider the morphism
$$
\rho : C_{F_{n, \infty}}(\varphi) \to F_{n, [t_0, \infty)}
$$
sending $f$ to its restriction on $[t_0, \infty)$.
\noindent Similarly to the proof of Proposition \ref{reduction1} we now
show  that $\rho$ is bijective. The fact that $\rho$ is injective is
obvious. To show that $\rho$ is surjective consider $f_1 \in F_{n, [t_0,
\infty)}$. Then there is
a unique function $\tilde{f} : \BR \to \BR$ that coincides with $f_1$
on $[t_0, \infty)$
and $\tilde{f}h = h \tilde{f}$. In fact $\tilde{f} = (f_2, f_1) \in
F_{n, [- \infty,
t_0]} \times F_{n, [t_0, \infty]}$. By Corollary \ref{swaping}
$\widetilde{\varphi}$
induces a map swapping the characters $\tilde{\mu}_1$ and that
$\tilde{\nu}_1$.
Since $\widetilde{\varphi}(f_1) = f_2,$
$t_0$ is not a break point of $f$ and so $f \in F_{n, \infty}$.
Thus $f \in C_{F_{n, \infty}}(\varphi)$ and $\rho(f) = f_1$ i.e. $\rho$ is
surjective.
\end{proof}

\noindent Theorem \ref{finalcentral} together with Proposition
\ref{HNNext} yields the desired direction of Theorem A, part (2).

\begin{corollary} \label{importantcor} If (\ref{condition}) holds then
$C_{F_{n,
\infty}}(\varphi)$ is of type $\FFF_{\infty}$.
\end{corollary}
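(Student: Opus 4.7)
The plan is to deduce this corollary directly by concatenating two results already established in this section. Specifically, Theorem \ref{finalcentral} asserts, under hypothesis (\ref{condition}), that there is an isomorphism $C_{F_{n,\infty}}(\varphi) \simeq F_{n,[t_0,\infty]}$, while Proposition \ref{HNNext} states that $F_{n,[t_0,\infty]}$ is of type $\FFF_{\infty}$ (realized as an ascending HNN extension with base group $F_{n,r} \cong F_{n,\infty}$ and stable letter the restriction of $\hat{f}$). Since $\FFF_{\infty}$ is an invariant of isomorphism type, combining these two statements gives the corollary immediately.

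Since the statement is a one-line consequence, there is essentially no obstacle at this stage; all the real work has been carried out earlier. The main effort in the section went into Theorem \ref{finalcentral}, whose proof required the swapping lemma (Corollary \ref{swaping}) for the characters $\tilde{\mu}_1, \tilde{\nu}_1$, and that in turn rested on the $\Sigma^1$ computation of Corollaries \ref{sigma200} and \ref{sigma211} together with the Bieri--Neumann--Strebel theorem (Theorem \ref{sigma1}) applied via the realization in Lemma \ref{interval}. In the write-up I would therefore simply cite Theorem \ref{finalcentral} and Proposition \ref{HNNext} and conclude.
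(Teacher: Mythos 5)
Your proposal matches the paper exactly: the corollary is stated immediately after the remark that Theorem \ref{finalcentral} together with Proposition \ref{HNNext} yields this direction of Theorem A, part (2), which is precisely your concatenation of the isomorphism $C_{F_{n,\infty}}(\varphi)\simeq F_{n,[t_0,\infty]}$ with the $\FFF_\infty$ property of the latter. Nothing further is needed.
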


\section{Proof of Proposition B}\label{theotherdirection}

\noindent We begin by completing the proof of part (2) of Theorem A. This
follows from combining the results of the previous section with the
following lemma.

\begin{lemma}  \label{finitegenerationused} Suppose that $C_{F_{n,
\infty}}(\varphi)$ is
finitely generated. Then
there exists an element $f \in C_{F_{n, \infty}}(\varphi)$ with slope
at $t_0$ not equal to one.
\end{lemma}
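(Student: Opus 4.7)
The plan is to prove the contrapositive: assuming every $f \in C_{F_{n,\infty}}(\varphi)$ has slope $1$ at $t_0$, I will show $C_{F_{n,\infty}}(\varphi)$ is not finitely generated. Observe first that by Proposition \ref{reduction1}, if $t_0 \in \BZ[\frac{1}{n}]$ then $C_{F_{n,\infty}}(\varphi) \cong F_{n,t_0}$, which clearly contains elements of slope $n^k$ at $t_0$ for any $k$. Hence the contrapositive hypothesis forces $t_0 \notin \BZ[\frac{1}{n}]$, the case I work in henceforth.

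Since break points of elements of $F_{n,\infty}$ lie in the discrete set $\BZ[\frac{1}{n}]$ and $t_0 \notin \BZ[\frac{1}{n}]$, every $f \in F_{n,\infty}$ is linear on some open interval about $t_0$, with a single (two-sided) slope. For $f \in C_{F_{n,\infty}}(\varphi)$, Lemma \ref{fixed} gives $(t_0)f = t_0$, so combined with slope $1$ this forces $f$ to be the identity on an interval $(t_0 - \delta_f, t_0 + \delta_f)$. If $C_{F_{n,\infty}}(\varphi) = \langle f_1, \ldots, f_k \rangle$, set $\delta = \min_i \delta_{f_i} > 0$; then every element of $C_{F_{n,\infty}}(\varphi)$ is the identity on $(t_0 - \delta, t_0 + \delta)$.

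To contradict this, I will produce an element of $C_{F_{n,\infty}}(\varphi)$ that is not the identity on $(t_0 - \delta, t_0 + \delta)$. By density of $\BZ[\frac{1}{n}]$, choose $r_1 < r_2$ in $\BZ[\frac{1}{n}]$ with $t_0 < r_1 < r_2 < t_0 + \delta$, and pick any non-trivial $g \in F_{n,\infty}$ whose support is contained in $[r_1, r_2]$ (standard Thompson-type elements scaled to $[r_1,r_2]$ suffice). Set $f^* = g \cdot \varphi(g)$. Since $\varphi$ is an automorphism of the abstract group $F_{n,\infty}$, we automatically have $\varphi(g) = h^{-1} g h \in F_{n,\infty}$. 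Its support equals $h([r_1, r_2]) \subseteq (-\infty, t_0)$, which is disjoint from $[r_1, r_2]$, so $g$ and $\varphi(g)$ commute. Consequently $\varphi(f^*) = \varphi(g)\,\varphi^2(g) = \varphi(g) \cdot g = f^*$, placing $f^*$ in $C_{F_{n,\infty}}(\varphi)$; but $f^*$ agrees with $g$ on $[r_1, r_2] \subseteq (t_0, t_0 + \delta)$, and hence is not the identity there, the desired contradiction.

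The main obstacle is conceptual rather than computational: for $n \geq 3$ the homeomorphism $h$ may be exotic (non-PL), so one might naively worry that $h^{-1} g h$ fails to lie in $F_{n,\infty}$ and that the construction of $f^*$ breaks down. This worry is unfounded, since $\varphi$ is given to us as an automorphism of the group $F_{n,\infty}$, which forces $\varphi(g) \in F_{n,\infty}$ with no PL verification of $h$ required. Once this is recognised, the argument reduces to the elementary support-disjointness observation above.
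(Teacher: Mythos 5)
Your proof is correct and follows essentially the same route as the paper: finite generation plus the all-slopes-one assumption gives a uniform neighbourhood of $t_0$ on which every element of $C_{F_{n,\infty}}(\varphi)$ is the identity, and then one contradicts this with the $\varphi$-fixed element $g\cdot\varphi(g)$, whose two factors have disjoint supports on either side of $t_0$ (the paper uses $f_0$ supported on $[r_0,\infty)$ with $r_0\in\BZ[\tfrac1n]$ inside the neighbourhood, where you use a compactly supported $g$, but the mechanism — that $\varphi(g)\in F_{n,\infty}$ automatically and is supported to the left of $t_0$ — is identical). Only a cosmetic remark: since the paper's homeomorphisms act on the right, the support of $\varphi(g)$ should be written $(\supp g)h$ rather than $h([r_1,r_2])$.
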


\begin{proof}
The case when $t_0 \in  \BZ[ {1 \over n}]$ is obvious, so we can
assume $t_0 \notin  \BZ[
{1 \over n}]$.
Suppose that  for every $f \in C_{F_{n, \infty}}(\varphi) $ the slope at
$t_0$
is 1.
Recall that by Lemma \ref{fixed} $(t_0) f = t_0$.  As  $C_{F_{n,
\infty}}(\varphi)$ is
finitely generated there is a small closed interval $J$ containing
$t_0$ such that the
restriction of any element of $C_{F_{n, \infty}}(\varphi)$ to $J$ is the
identity.  In fact, $J$ is  the intersection of the intervals defined for
the
finitely many generators $f$ of  $C_{F_{n, \infty}}(\varphi)$. Thus $J$ is
a closed interval, which is not a point.

\noindent
Let $r_0$ be an element of $\BZ[{1 \over n}]$ such that $r_0 > t_0$
and $r_0$ is in the interior of the interval $J$. Set $f_0 \in F_{n, r_0}$
such that the
right-hand  derivative at $r_0$ is not 1 and denote by $f_1$  the
restriction of
$f_0 $ on $[t_0,\infty)$. Thus $f_1 \in F_{n, [t_0,  \infty]}$ and  $f_2 =
h^{-1}
f_1 h$ can be thought of as an element of $F_{n, [- \infty, t_0]}$.
Since $\varphi$ has order 2,  $(f_2, f_1) \in F_{n, [- \infty,t_0]} \times
F_{n,[t_0,  \infty]}$ is a fixed point of $\varphi$ whose restriction on
$J$ is not the identity, giving
a contradiction.
\end{proof}

\noindent The following result yields Proposition B.

\begin{corollary} \label{propB} The group $C_{F_{n, \infty}}(\varphi)$
is either
infinitely generated or of type $\FFF_{\infty}$.
\end{corollary}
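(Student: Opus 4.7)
The plan is to derive Corollary \ref{propB} as a direct dichotomy built from the two main technical inputs already established: Lemma \ref{finitegenerationused} and Corollary \ref{importantcor}. Suppose $C_{F_{n,\infty}}(\varphi)$ is not infinitely generated; equivalently, assume it is finitely generated. The goal is then to promote finite generation all the way up to type $\FFF_\infty$.

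First, since $C_{F_{n,\infty}}(\varphi)$ is assumed finitely generated, Lemma \ref{finitegenerationused} produces an element $f \in C_{F_{n,\infty}}(\varphi) \subseteq F_{n,\infty}$ whose slope at $t_0$ is not equal to $1$. By Lemma \ref{fixed}, this $f$ automatically satisfies $(t_0)f = t_0$, so $f$ itself plays the role of the witness $\hat f$ in condition (\ref{condition}). Hence condition (\ref{condition}) holds.

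Second, with (\ref{condition}) verified, Corollary \ref{importantcor} immediately yields that $C_{F_{n,\infty}}(\varphi)$ is of type $\FFF_\infty$. Combining the two cases gives the stated dichotomy: either $C_{F_{n,\infty}}(\varphi)$ is infinitely generated, or it is of type $\FFF_\infty$. The argument is internally consistent since type $\FFF_\infty$ implies $\FFF_1$, i.e.\ finite generation, so the two alternatives are genuinely exhaustive and mutually exclusive.

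There is no real obstacle here, as the substantive work has been compressed into Lemma \ref{finitegenerationused} (which is the nontrivial step: using finite generation to pin down a single generator that breaks the slope-$1$ constraint at $t_0$) and Corollary \ref{importantcor} (whose proof relied on the $\Sigma$-theoretic analysis of Section \ref{sectiondirection}, in particular Theorem \ref{finalcentral} identifying $C_{F_{n,\infty}}(\varphi)$ with $F_{n,[t_0,\infty]}$, and Proposition \ref{HNNext} exhibiting the latter as an ascending HNN extension of a group of type $\FFF_\infty$). The proof of Corollary \ref{propB} itself amounts to stitching these two results together.
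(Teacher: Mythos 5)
Your argument is correct and is precisely the paper's own proof: the paper deduces Corollary \ref{propB} directly from Lemma \ref{finitegenerationused} (finite generation yields a fixed element with slope at $t_0$ different from $1$, hence condition (\ref{condition})) combined with Corollary \ref{importantcor}. The paper additionally remarks on a shorter alternative bypassing Corollary \ref{importantcor}, but your route matches its primary argument.
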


\begin{proof} This follows directly from Corollary \ref{importantcor}
and the previous
lemma. We observe that there is a shorter proof of this corollary
avoiding Corollary
\ref{importantcor}. If $C_{F_{n, \infty}}(\varphi)$ is finitely
generated  the previous lemma implies that there is an element $\hat{f}$
of $C_{F_{n,
\infty}}(\varphi)$ which has right-hand
slope at $t_0$ different from 1. Note that this is stronger than condition
(\ref{condition}), which requires such an element to just lie in $F_{n,
\infty}$. This
element can be used directly  in the proof of Theorem \ref{finalcentral}
to show that the
image of the map $\rho$ of the
proof of Theorem \ref{finalcentral} has finite index in $F_{n, [t_0,
\infty]}$ instead of using Corollary \ref{swaping}. Indeed, define $M$ as
the subgroup of $F_{n,
[t_0, \infty]}$ containing the elements which have slope 1 at $t_0$.
Then with the notation used in the proof of Corollary \ref{importantcor},
$M \cup \{
\rho(\hat{f}) \} \subseteq Im (\rho)$ and $\rho(\hat{f}) \notin M$. Thus
$M$ is a
proper subgroup of $Im
(\rho)$. Hence $1 \not= Im (\rho) / M \leq F_{n, [t_0, \infty]} / M \simeq
\BZ$ and $ Im (\rho)$ has finite index in $F_{n, [t_0, \infty]}$. In
particular, as $\rho$ is injective, $Im(\rho) \simeq C_{F_{n,
\infty}}(\varphi)$ is of type $\FFF_{\infty}$ as required.
\end{proof}

\begin{corollary} \label{equivalent} There exists an element $f \in
C_{F_{n,
\infty}}(\varphi)$ with right-hand slope at $t_0$ different from  1
if and only if there exists an element $f \in F_{n, \infty}$ with
right-hand slope at $t_0$
different from  1.
\end{corollary}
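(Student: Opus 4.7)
The plan is to combine Corollary~\ref{importantcor} with Lemma~\ref{finitegenerationused}. The direction $(\Rightarrow)$ is immediate: any element witnessing the left-hand side automatically witnesses the right-hand side via the inclusion $C_{F_{n,\infty}}(\varphi)\subseteq F_{n,\infty}$.

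For the direction $(\Leftarrow)$, I would note that the stated hypothesis is exactly condition~(\ref{condition}). Thus Corollary~\ref{importantcor} applies and $C_{F_{n,\infty}}(\varphi)$ is of type $\FFF_{\infty}$; in particular it is finitely generated. Lemma~\ref{finitegenerationused} then produces an element $g\in C_{F_{n,\infty}}(\varphi)$ whose slope at $t_0$ is different from $1$.

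What remains is to upgrade "slope" to "right-hand slope". This is where a short symmetry argument enters. From $\varphi(g)=g$ and $h^2=\mathrm{id}$ we obtain $hgh=g$; comparing right-hand one-sided derivatives of both sides at $t_0$, and using that $h$ is a decreasing homeomorphism fixing $t_0$ (so that $h$ interchanges small intervals to the left and right of $t_0$), one obtains that the right-hand slope of $g$ at $t_0$ coincides with the left-hand slope of $g$ at $t_0$. Consequently the conclusion of Lemma~\ref{finitegenerationused} automatically gives right-hand slope different from $1$.

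The essential content is carried entirely by Corollary~\ref{importantcor}; once that is at our disposal, the rest is bookkeeping. The only minor obstacle is reconciling the precise wording of one-sided versus two-sided slopes across the statements of Lemma~\ref{finitegenerationused} and Corollary~\ref{equivalent}, which the symmetry argument above settles in one step.
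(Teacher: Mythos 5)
Your overall route coincides with the paper's: the forward direction is the trivial inclusion, and for the converse you observe that the hypothesis is condition~(\ref{condition}), apply Corollary~\ref{importantcor} to get that $C_{F_{n,\infty}}(\varphi)$ is of type $\FFF_{\infty}$ (hence finitely generated), and then invoke Lemma~\ref{finitegenerationused}; this is exactly the paper's proof of Corollary~\ref{equivalent}.

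The one step I would not let stand as written is your ``symmetry argument''. You differentiate the relation $hgh=g$ at $t_0$, but $h$ is only a homeomorphism of the real line --- for $n\geq 3$ it may be exotic, in particular not PL and with no one-sided derivatives at $t_0$ --- so the chain-rule comparison of one-sided derivatives through $h$ is not available. Moreover the conclusion you extract, that the right-hand slope of $g$ at $t_0$ equals its left-hand slope, is false in general: setting $u(s)=(t_0+s)h-t_0$, commutation of $g$ with $h$ only yields the functional equation $u(as)=b\,u(s)$ for small $s>0$, where $a,b$ are the right- and left-hand slopes of $g$ at $t_0$, and for instance $u(s)=-s^c$ is compatible with $b=a^c\neq a$. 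What is true, and is all you need, is the weaker statement that $a=1$ if and only if $b=1$, which follows at once from $u(as)=b\,u(s)$ and injectivity of $u$. In fact no such argument is needed: if $t_0\notin\Z[\frac1n]$ then $t_0$ is not a break point of any element of $F_{n,\infty}$, so the ``slope'' produced by Lemma~\ref{finitegenerationused} is two-sided and in particular is the right-hand slope; and if $t_0\in\Z[\frac1n]$ then Proposition~\ref{reduction1} identifies $C_{F_{n,\infty}}(\varphi)$ with $F_{n,t_0}$, which visibly contains elements whose right-hand slope at $t_0$ is $\neq 1$. With that repair (or simply omitting the symmetry step), your argument is the paper's.
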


\begin{proof}
This follows directly from Corollary \ref{importantcor} and Lemma
\ref{finitegenerationused}.
\end{proof}

\section{Auxiliary results}

\noindent
As in \cite{BrinGuzman} we consider the function
$$
\phi_n : \BZ [{1 \over n}] \to \BZ_{n-1}
$$
given by $$\phi_n(n^a b ) \equiv b \ \ (\hbox{mod  } n-1), $$ where
$a,b \in \BZ$ and $b$
is not divisible by $n$.

\begin{lemma} \label{odd1} Let $n,k$ be coprime natural numbers. Consider
$$t_1:={m\over k}n^a,\quad t_2:={r\over k}n^c$$
such that
$$k(n-1)\mid rn^c-mn^a$$
(here, divisibility is understood in $\BZ[{1\over n}]$).
Then there exists some $g \in F_{n, \infty}$ such that
$$(t_1)g=t_2.$$
\end{lemma}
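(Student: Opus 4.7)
The divisibility hypothesis $k(n-1) \mid rn^c - mn^a$ in $\BZ[\frac{1}{n}]$ is equivalent to the statement
\[
t_2 - t_1 \;=\; \frac{rn^c - mn^a}{k} \;=\; (n-1)\ell \quad\text{for some } \ell \in \BZ[\tfrac{1}{n}].
\]
This is the only consequence of the hypothesis I would actually use; after this rewriting the argument becomes an explicit construction of a $g \in F_{n,\infty}$ which, on a small $\BZ[\frac{1}{n}]$-neighbourhood of $t_1$, acts as the translation $x \mapsto x + (n-1)\ell$. In particular the coprimality of $n$ and $k$ and the special form of $t_1,t_2$ play no further role.

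I would then pick $p_1, p_2 \in \BZ[\frac{1}{n}]$ with $p_1 < t_1 < p_2$ (using density of $\BZ[\frac{1}{n}]$ in $\BR$) and set $(x)g := x + (n-1)\ell$ on $[p_1, p_2]$; since $(n-1)\ell \in \BZ[\frac{1}{n}]$ this sends $\BZ[\frac{1}{n}]$ into itself and already gives $(t_1)g = t_2$. To extend $g$ to the right I would pick any integer $i_2 > \ell$, set $q_2 := p_2 + i_2 - \ell \in \BZ[\frac{1}{n}]$, and give $g$ slope $n$ on $[p_2, q_2]$ and slope $1$ on $[q_2, \infty)$. A short computation shows that continuity at $q_2$ forces the translation on $[q_2, \infty)$ to equal $(n-1)(q_2 - p_2 + \ell) = (n-1)i_2$, which is the integer multiple of $n-1$ required by the definition of $F_{n,\infty}$. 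A mirror-symmetric construction on the left of $p_1$, using an integer $i_1 < \ell$, yields $(x)g = x + (n-1)i_1$ for $x$ sufficiently negative.

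The resulting $g$ is a strictly increasing PL homeomorphism of $\BR$ with break points in $\BZ[\frac{1}{n}]$, slopes in $\{1, n\}$, and integer-multiple-of-$(n-1)$ translations at $\pm\infty$, hence $g \in F_{n,\infty}$ and $(t_1)g = t_2$ by construction. There is no real obstacle; the only thing to verify along the way is that every chosen break point lies in $\BZ[\frac{1}{n}]$, which is automatic from the formulae $q_j = p_j + i_j - \ell$ since $p_j$ and $\ell$ are already in $\BZ[\frac{1}{n}]$ and $i_j \in \BZ$.
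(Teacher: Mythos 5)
Your proof is correct, and it takes a genuinely different and more elementary route than the paper. You reduce the hypothesis to the exact statement it encodes, namely $t_2-t_1=(n-1)\ell$ with $\ell\in\BZ[\frac1n]$, and then build $g$ by hand: translation by $(n-1)\ell$ on a $\BZ[\frac1n]$-interval $[p_1,p_2]$ around $t_1$, a slope-$n$ piece on each side, and translations by $(n-1)i_1$, $(n-1)i_2$ near $\mp\infty$; your continuity computation $(n-1)(q_2-p_2+\ell)=(n-1)i_2$ is right, all break points and their images lie in $\BZ[\frac1n]$, the slopes are $1$ and $n$, so $g\in F_{n,\infty}$ and $(t_1)g=t_2$. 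The paper instead uses the coprimality of $n$ and $k$ (choosing $s$ with $1+ns\equiv 0 \bmod k$) to manufacture points $x_1,x_2,y_1,y_2\in\BZ[\frac1n]$ flanking $t_1$ and $t_2$ with $\phi_n(x_i)=\phi_n(y_i)$, invokes Brin--Guzm\'an's interpolation lemma to get $g$ with $(x_i)g=y_i$, and then needs the additional observation that the interpolating map of that construction is literally the translation $x\mapsto x+t_2-t_1$ on $[x_2,x_1]$, since $t_1$ itself is typically not in $\BZ[\frac1n]$. Your argument buys simplicity and generality: it needs nothing beyond the definition of $F_{n,\infty}$, makes no use of the coprimality of $n,k$ or of the specific shape of $t_1,t_2$ (it works for any reals with $t_2-t_1\in(n-1)\BZ[\frac1n]$), and avoids having to peer into the details of the cited interpolation construction; the paper's version has the mild advantage of deriving the result as a direct corollary of an existing lemma in the literature rather than redoing a PL construction.
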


\begin{proof}Let $0<s<k$ such that $1+ns=kt$ for some $t\in\BZ$. We
assume $t_1\geq 0,$ the other case is analogous. Put
$$x_1:=t_1+t_1ns,$$
$$x_2:=t_1+t_1n(s-k),$$
$$y_1:=t_2+t_1ns,$$
$$y_2:=t_2+t_1n(s-k).$$
Clearly, $x_2\leq t_1\leq x_1$ and $y_2\leq t_2\leq y_1$. Moreover
$x_1=t_1kt\in\BZ[{1\over n}]$ and as
$x_1-x_2=y_1-y_2=t_1nk=mn^{a+1}\in\BZ[{1\over n}]$ and
$y_1-x_1=t_2-t_1={rn^c-mn^{a}\over k}\in\BZ[{1\over n}]$ we observe
that $x_1,x_2,y_1,y_2\in\BZ[{1\over n}]$. Also,
$$\phi_n(y_1-x_1)=\phi_n(y_2-x_2)=\phi_n({rn^c-mn^a\over k})=0$$
by hypothesis which implies $\phi_n(x_i)=\phi_n(y_i)$ for $i=1,2$.
Therefore by
\cite[Lemma~1.2.1]{BrinGuzman} there is $g \in F_{n, \infty}$ such
that $(x_i)g=y_i$. Moreover this $g$ in the interval $[x_2,x_1]$ is
constructed as follows: as the length of the interval is
$x_1-x_2=mn^{a+1}$, we may subdivide it in $m$ intervals of length
$n^{a+1}$ each. We do the same with $[y_2,y_1]$, which has the same
length $mn^{a+1}$ and then define $g$ mapping each subdivision of
$[x_2,x_1]$ to each subdivision of $[y_2,y_1]$. Clearly, this implies
that on $[x_2,x_1]$, $g$ is precisely the function $(x)g=x+t_2-t_1$.
Therefore
$$(t_1)g=t_2.$$
\end{proof}

\begin{lemma}\label{tec1} For any $t_0\in\Q$ there exist integers
$k,s,t>0$
such that
$n-1\mid k$ and
$$t_0={k\over n^t(n^s-1)}$$
\end{lemma}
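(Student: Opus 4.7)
The plan is to clear the denominator of $t_0$ in two stages: first absorb the $n$-part into $n^t$, then use Euler's theorem to force the coprime-to-$n$ part to divide $n^s-1$. The extra requirement $(n-1)\mid k$ will not be automatic and will require an additional trick. Write $t_0 = p/q$ in lowest terms (the positivity of the right-hand side forces $t_0>0$, so $p,q>0$), and factor
$$
q = n^\alpha q', \qquad \alpha\geq 0, \qquad \gcd(q',n)=1.
$$

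Since $\gcd(q',n)=1$, there exists an integer $s_0>0$ with $n^{s_0}\equiv 1\pmod{q'}$ (take the multiplicative order of $n$ modulo $q'$, or $s_0=\phi(q')$; if $q'=1$ any $s_0\geq 1$ works). A naive choice $s:=s_0$, $t:=\max(\alpha,1)$ produces $k:=n^t(n^s-1)t_0 = n^{t-\alpha}p(n^{s_0}-1)/q'\in\Z_{>0}$ satisfying the equation $t_0 = k/(n^t(n^s-1))$, but $(n-1)\mid k$ need not hold. Overcoming this is the only real obstacle.

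The trick is to replace $s_0$ by $s:=(n-1)s_0$. Using the factorization
$$
n^s - 1 \;=\; (n^{s_0})^{n-1}-1 \;=\; (n^{s_0}-1)\sum_{i=0}^{n-2} n^{s_0 i},
$$
the first factor is still divisible by $q'$, while the second factor is a sum of $n-1$ terms each congruent to $1$ modulo $n-1$, hence divisible by $n-1$. Setting $t:=\max(\alpha,1)$, the integer
$$
k \;=\; \frac{n^t(n^s-1)p}{q} \;=\; n^{t-\alpha}\cdot p\cdot\frac{n^{s_0}-1}{q'}\cdot\sum_{i=0}^{n-2}n^{s_0 i}
$$
is positive, divisible by $n-1$, and satisfies $t_0 = k/(n^t(n^s-1))$. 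Both $s,t$ are clearly positive, completing the argument. I expect the write-up to be short; the one conceptual point worth isolating is the observation that multiplying the exponent $s_0$ by $n-1$ manufactures the required factor of $n-1$ in $k$ without destroying the divisibility $q'\mid n^s-1$.
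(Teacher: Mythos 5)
Your argument is correct, and its skeleton is the one the paper uses too: split the denominator into its $n$-part and its part coprime to $n$, absorb the former into $n^t$ and the latter into $n^s-1$ via the multiplicative order (or Euler's theorem). The only point where you diverge is the device for securing $n-1\mid k$, which you rightly single out as the one obstacle. The paper handles it before doing anything else: it writes $t_0=r/m$ \emph{without} demanding the fraction be reduced, choosing $r$ with $n-1\mid r$ (e.g.\ multiply numerator and denominator by $n-1$); then $m\mid n^t(n^s-1)$ gives $r_1m=n^t(n^s-1)$ and $k=r_1r$ inherits the factor $n-1$ for free. You instead keep $t_0=p/q$ in lowest terms and manufacture the factor inside $n^s-1$ by replacing $s_0$ with $(n-1)s_0$ and using
$$n^{(n-1)s_0}-1=(n^{s_0}-1)\sum_{i=0}^{n-2}n^{s_0 i},$$
the second factor being a sum of $n-1$ terms each $\equiv 1\pmod{n-1}$. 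Both tricks are valid; the paper's is a one-line bookkeeping move, while yours costs a larger exponent $s$ but keeps the fraction reduced and makes the divisibility mechanism explicit. Your side remark on signs is also consistent with the paper: as stated, with $k>0$, the identity forces $t_0>0$, and the paper's proof is equally indifferent to this (its $k=r_1r$ is simply negative for negative $t_0$); in the application (Lemma \ref{odd2}) the sign of $k$ is irrelevant.
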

\begin{proof} Put $t_0=r/m$ with $n-1\mid r$ (we don't require $r,m$
coprime) and
decompose $m=m_1m_2$ so that $m_2,n$ are coprime and all the primes
dividing $m_1$
divide also $n$. Then there is some $t>0$ such that $m_1\mid n^t$ and
some $s>0$ such
that $n^s\equiv 1$ mod $m_2$. Therefore $m=m_1m_2\mid n^t(n^s-1)$ and
for some $r_1$ we
have $r_1 m=n^t(n^s-1)$. Putting $k=r_1r$ we are done.
\end{proof}

\begin{lemma}\label{odd2} Let $t_0\in\R$. Then there is some $f\in
F_{n,\infty}$ such
that $(t_0)f=t_0$ and the right-hand slope of $f$ at $t_0$ is not one if
and only if
$t_0\in\Q$.
\end{lemma}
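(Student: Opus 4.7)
The plan is to establish the two directions separately.

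For the ``only if'' direction, I would argue that if $f\in F_{n,\infty}$ fixes $t_0$ and has right-hand slope $n^k \neq 1$ at $t_0$, then since $X_f \subseteq \BZ[{1\over n}]$ is discrete, $f$ is linear of the form $(x)f = n^k(x-t_0) + t_0$ on some half-open interval $[t_0, t_0+\epsilon)$. Picking $x\in \BZ[{1\over n}]\cap (t_0, t_0+\epsilon)$ (nonempty by density) and using that elements of $F_{n,\infty}$ preserve $\BZ[{1\over n}]$, the identity $t_0(1-n^k) = (x)f - n^k x \in \BZ[{1\over n}]$ forces $t_0\in\Q$ since $1-n^k\neq 0$.

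For the converse I would split into two cases. When $t_0\in\BZ[{1\over n}]$, the subgroup $F_{n,t_0}$ is isomorphic to $F_{n,\infty}$ by Lemma~\ref{isomorphism} and evidently contains elements with right-hand slope at $t_0$ different from $1$. The substantive case is $t_0\in\Q\setminus\BZ[{1\over n}]$. Here I would invoke Lemma~\ref{tec1} to write $t_0 = k/(n^t(n^s-1))$ with $s,t>0$ and $(n-1)\mid k$, and then construct $f\in F_{n,\infty}$ that is linear of slope $n^{-s}$ on a small interval $[a,b]\subset \BZ[{1\over n}]$ surrounding $t_0$ and the identity outside a slightly larger interval $[a_0,b_0]\subset\BZ[{1\over n}]$, with PL interpolation on the two bridging subintervals. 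Forcing the slope $n^{-s}$ on $[a,b]$ prescribes $(a)f = n^{-s}(a-t_0)+t_0$ and $(b)f = n^{-s}(b-t_0)+t_0$; a direct simplification gives $(a)f = n^{-s}(a+k/n^t)$ and similarly for $(b)f$, so both values lie in $\BZ[{1\over n}]$ precisely because of the shape of $t_0$ furnished by Lemma~\ref{tec1}.

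To realize these prescribed values by an element of $F_{n,\infty}$ via \cite[Lemma~1.2.1]{BrinGuzman}, I need to verify the $\phi_n$-compatibility at the four points $a_0, a, b, b_0$. Using that $\phi_n$ is a ring homomorphism with $\phi_n(n^j)=1$ and, crucially, $\phi_n(k) = 0$ because $(n-1)\mid k$, the computations $\phi_n((a)f) = \phi_n(a)$ and $\phi_n((b)f) = \phi_n(b)$ fall out immediately, while the conditions at $a_0,b_0$ are automatic. Then \cite[Lemma~1.2.1]{BrinGuzman} produces the desired $f$, which by construction satisfies $(t_0)f = t_0$ and has right-hand slope $n^{-s}\neq 1$ at $t_0$.

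The main obstacle is producing this candidate $f$ in the substantive case: both the shape of $t_0$ given by Lemma~\ref{tec1}, particularly the divisibility $(n-1)\mid k$, and the precise match between the chosen slope $n^{-s}$ and the denominator $n^s-1$ of $t_0$ are essential so that $(a)f, (b)f$ land in $\BZ[{1\over n}]$ and the $\phi_n$-compatibility required to invoke \cite[Lemma~1.2.1]{BrinGuzman} is met.
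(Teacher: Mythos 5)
Your proof is correct, and it shares the paper's two key ingredients: the same elementary argument for the ``only if'' direction (local linearity at $t_0$ plus preservation of $\BZ[\frac 1n]$ forces $t_0\in\Q$), and Lemma \ref{tec1} to put $t_0$ in the form $k/(n^t(n^s-1))$ with $(n-1)\mid k$, so that a linear germ of slope $n^{\pm s}$ fixes $t_0$ and maps $\BZ[\frac 1n]$ into itself near $t_0$. Where you diverge is in how this germ is extended to an element of $F_{n,\infty}$: the paper does not split off the case $t_0\in\BZ[\frac 1n]$ and builds the element explicitly in three pieces, taking it to be the translation $x\mapsto x+l_i(n-1)$ outside $[x_1,x_2]$ and solving the congruence $(1+n+\dots+n^{s-1})\alpha_i\equiv k_1 \pmod{n^t}$ (possible since $1+n+\dots+n^{s-1}$ is coprime to $n$) to choose $x_i=\alpha_i/n^t$ so that the translation amounts $y_i-x_i$ really are integer multiples of $n-1$; this is completely self-contained and needs no transitivity result. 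You instead make the element compactly supported and obtain the two bridging pieces from \cite[Lemma~1.2.1]{BrinGuzman} after checking $\phi_n((a)f)=\phi_n(a)$ and $\phi_n((b)f)=\phi_n(b)$, which is exactly the right compatibility condition and does hold because $\phi_n(n^{\pm j})=1$ and $\phi_n(k)=0$. The only point to make explicit is that Lemma 1.2.1 as stated produces a global element realizing the prescribed values at the four points, not the bridges themselves, so you should restrict such elements to $[a_0,a]$ and $[b,b_0]$ and glue them with the identity and with your middle linear piece (the ordering $a_0<a<(a)f<t_0<(b)f<b<b_0$ guarantees the glued map is an increasing PL homeomorphism in $F_{n,\infty}$); this is a routine fix, and indeed the paper itself uses the internal construction of Lemma 1.2.1 in the same spirit in Lemma \ref{odd1}. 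In exchange for relying on \cite{BrinGuzman}, your construction yields an element that is the identity outside a bounded interval, while the paper's is shorter and avoids $\phi_n$ altogether.
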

\begin{proof} Assume first that there exists such an element $f$. Then for
some interval $I$ with
$t_0\in I$, $(x)f=ax+b$ for any $x\in I$ where $a,b\in\Q$ and $a\neq
1$. Then the
equation $(t_0)f=t_0$ implies $t_0\in\Q$.

Next, we assume that $t_0\in\Q$ so by Lemma \ref{tec1} we may put
$t_0={k\over
n^t(n^s-1)}$. Let $a=n^s= \langle n \rangle $, $b=-{k\over
n^t}\in\Z[1/n]$. Then for
any function $f$
such that in a neighbourhood of $t_0$,  $(x)f=ax+b$ we have
$(t_0)f=t_0$. We claim that there is
some such function  in $F_{n,\infty}$. To prove the claim  it is
sufficient to find
$x_1,x_2\in\Z[1/n]$ such that $t_0\in[x_1,x_2]$ and  for $y_i=ax_i+b$,
$i=1,2$
$$y_i-x_i=l_i(n-1)$$
for some integers $l_i.$ Then the function
$$(x)f=\Bigg\{\begin{aligned}
&x+l_1(n-1)\text{ if }x\leq x_1\\
&ax+b\text{ if }x_1<x\leq x_2\\
&x+l_2(n-1)\text{ if }x_2<x\\
\end{aligned}$$
  is in $F_{n,\infty}$ and satisfies the conditions we wanted. So we
need (recall that
$n-1\mid k$ and denote $k_1=k/(n-1)$)
$$y_i-x_i=(a-1)x_i+b=(n^s-1)x_i-k/n^t=l_i(n-1).$$
Restricting to $x_i$ of the shape $\alpha_i/n^t$ with $\alpha_i\in\Z$
this yields
$${(n^{s-1}+\ldots+n+1)\alpha_i-k_1\over n^t}=l_i\in\Z.$$
As $n^{s-1}+\ldots+n+1$ is coprime to $n^t$ we may choose
infinitely many values of
$\alpha_i$ having this property. Note also that $\alpha_1$ can be
chosen arbitrarily
small and $\alpha_2$ arbitrarily big so that $t_0\in[x_1,x_2]$.
\end{proof}

\section{Conjugacy classes} \label{sectionconjugacy}

\noindent
{\bf Definition.}  {\it Let $\varphi$ be an automorphism of $F_{n,
\infty}$ of order 2
given by conjugation with a homeomorphism  $h$ of the real line. Let
$k>0$ be an integer. We say that $h$ has
property $(\ast)$ for $k$ if for every  element $h_i$ of order 2 in $F_{n,
\infty} \rtimes \langle h
\rangle $  the unique $t_i \in \BR$ such that $(t_i) h_i = t_i$
belongs to ${1 \over
k}\BZ[{1 \over n}]$.}

  \medskip\noindent
  Note that for $k=k_1p$ with $p,n/p \in \mathbb{Z}$, we have
    $${1\over k}\BZ[{1\over n}]\subseteq{1\over k_1}\BZ[{1\over n}].$$
This implies that if $h$ has property $(\ast)$ for some $k$, then it also
has property $(\ast)$ for some $k'$ which is coprime to $n$.
\medskip

\noindent By Lemma \ref{odd2}  condition (\ref{condition}) of Section
\ref{sectiondirection}  holds for the order 2 automorphism induced by
conjugation with
each $h_i$  satisfying the above definition.

\medskip

\begin{lemma}\label{tec3}
Let $\varphi$ be an automorphism of $F_{n, \infty}$ of order 2
given by conjugation with a homeomorphism  $h$ of the real line having
property $(\ast)$ for some $k$.
Let $f,\widetilde{f}\in F_{n, \infty}$ such
that $\varphi(f) =
f^{-1}$, $\varphi(\widetilde{f}) = \widetilde{f}^{-1}$ and $(t_0)f =
(t_0) \widetilde{f}
= t_0 = (t_0)h$. Then there is an element $g \in F_{n, \infty}$ such that
$
{\varphi}(g)^{-1} f g =
\widetilde{f}$ and $(t_0)g = t_0$.
\end{lemma}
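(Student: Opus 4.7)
The plan is to define $g$ explicitly by gluing the identity on $(-\infty, t_0]$ with $f^{-1}\widetilde{f}$ on $[t_0, \infty)$: set $(x)g = x$ for $x \leq t_0$ and $(x)g = (x)f^{-1}\widetilde{f}$ for $x \geq t_0$. Both branches fix $t_0$, so $(t_0)g = t_0$. On $[t_0, \infty)$, $g$ inherits from $f^{-1}\widetilde{f} \in F_{n,\infty}$ the required data (breakpoints in $\BZ[1/n]$, slopes powers of $n$, translation by an integer multiple of $n-1$ at $+\infty$); on $(-\infty, t_0]$ it is the identity.

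Granting $g \in F_{n,\infty}$ for the moment, the conjugation equation $\varphi(g)^{-1} f g = \widetilde{f}$ is a direct computation using the relations $hfh = f^{-1}$ and $h\widetilde{f}h = \widetilde{f}^{-1}$, which are equivalent (since $h^2 = 1$) to $\varphi(f) = f^{-1}$ and $\varphi(\widetilde{f}) = \widetilde{f}^{-1}$. For $y \geq t_0$ we observe that $\varphi(g) = hgh$ is the identity on $[t_0, \infty)$, so the chain simplifies to $(y)\varphi(g)^{-1} f g = ((y)f)(f^{-1}\widetilde{f}) = (y)\widetilde{f}$. For $y \leq t_0$ the analogous computation produces $(y)h\widetilde{f}^{-1}fhf$, and using $h\widetilde{f}^{-1}h = \widetilde{f}$ together with $(hf)^2 = 1$ we obtain $\widetilde{f} \cdot (hf)^2 = \widetilde{f}$, as required.

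The main obstacle is verifying that $g$ really lies in $F_{n,\infty}$, since all defining conditions are inherited from $f^{-1}\widetilde{f}$ except for a possible incompatibility at the single point $t_0$. If $t_0 \in \BZ[1/n]$ then $t_0$ is a permitted breakpoint and nothing more is required. If $t_0 \notin \BZ[1/n]$ then $t_0$ cannot be a breakpoint, and we would prove the stronger claim that $f$ and $\widetilde{f}$ are each the identity on some neighbourhood of $t_0$, whence $g$ itself is the identity near $t_0$. To establish this, let $a := f'(t_0)$, a well-defined power of $n$ because $f$ is piecewise linear and $t_0$ is not a breakpoint of $f$, and define $\phi(\epsilon) := t_0 - (t_0+\epsilon)h$ for $\epsilon > 0$, a continuous increasing bijection of $(0,\infty)$ with $\phi(0^+) = 0$. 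Expanding the identity $(hf)^2 = 1$ at $x = t_0 + \epsilon$ using the local linearity of $f$ near $t_0$ and the involutivity of $h$ yields the functional equation $\phi(\epsilon/a) = a\phi(\epsilon)$ on some interval $(0, \delta_0)$. Iterating this forces $a = 1$: for $a > 1$ one obtains $\phi(\epsilon/a^m) = a^m\phi(\epsilon)$, with the left side tending to $0$ and the right to $\infty$; for $a < 1$, rewriting as $\phi(\epsilon) = a^m\phi(a^m\epsilon)$ and letting $m \to \infty$ yields the contradiction $\phi(\epsilon) = 0$. Hence $f$ is linear with slope $1$ and fixes $t_0$, so it is the identity on a neighbourhood of $t_0$, and the same argument applies to $\widetilde{f}$.
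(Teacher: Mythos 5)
Your proof is correct, but it takes a genuinely different route from the paper's. Both arguments glue the identity on one half-line with $f^{-1}\widetilde f$ on the other (the paper puts the identity on $[t_0,\infty)$, you on $(-\infty,t_0]$ -- immaterial), and the verification of $\varphi(g)^{-1}fg=\widetilde f$ is the same formal computation from $hfh=f^{-1}$, $h\widetilde f h=\widetilde f^{-1}$, $h^2=id$; the entire content is ruling out an illegal break of $g$ at $t_0$ when $t_0\notin\BZ[\frac1n]$. The paper does this with $\Sigma$-theory: working in $D=F_{n,(-\infty,t_0]}\times F_{n,[t_0,\infty)}$ it invokes Corollary \ref{swaping} (which rests on the Bieri--Neumann--Strebel computation, Theorem \ref{sigma1}, and requires condition (\ref{condition}); this is precisely where the hypothesis that $h$ has property $(\ast)$ enters, via Lemma \ref{odd2}) to get $\nu_1\circ\widetilde\varphi=\mu_1$ and hence $\nu_1(f_1)=\nu_1(\widetilde f_1)=0$, so the two halves match at $t_0$. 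You instead prove the needed fact elementarily and in a stronger form: since $t_0$ is not a break point, $f$ is affine near $t_0$ with slope $a=n^s$, and $(hf)^2=id$ together with $h^2=id$, $h$ decreasing, $(t_0)h=t_0$ gives the functional equation $\phi(\epsilon/a)=a\phi(\epsilon)$ for $\phi(\epsilon)=t_0-(t_0+\epsilon)h$, whose iteration forces $a=1$; hence $f$, $\widetilde f$, and so $f^{-1}\widetilde f$ are the identity near $t_0$ and the glued $g$ lies in $F_{n,\infty}$ with nothing to check at $t_0$ (the case $t_0\in\BZ[\frac1n]$ being trivial, as you note). What your route buys: it bypasses the $\Sigma$-invariant machinery entirely and never uses property $(\ast)$ or condition (\ref{condition}), so it actually proves the lemma for an arbitrary order-two automorphism given by conjugation with a homeomorphism fixing $t_0$. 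What the paper's route buys: Corollary \ref{swaping} is needed anyway for Theorem \ref{finalcentral}, where the relevant elements commute with $h$ rather than being inverted by it (so your functional-equation trick does not apply verbatim), and reusing it there makes the paper's proof of this lemma short given machinery already in place.
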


\begin{proof} Note that $ f h$ and
    $ \widetilde{f} h$ are  elements of order 2 in $F_{n, \infty} \rtimes
\langle h \rangle$ and that
$$\widetilde{\varphi}(f_1) = f_2^{-1}$$ and $$
\widetilde{\varphi}(\widetilde{f}_1) = \widetilde{f}_2^{-1},$$
where $f = (f_1, f_2)$, $\widetilde{f} = (\widetilde{f}_1,
\widetilde{f}_2) \in D = F_{n, (-\infty, t_0]} \times F_{n, [t_0,
\infty)}$ and $\widetilde{\varphi} \in Aut(D)$ is given by
conjugation with $h$. The automorphism $\widetilde{\varphi}$ is
well-defined by Lemma \ref{autD} as $(t_0) h = t_0$.

\noindent We shall construct an element $g = (g_1, g_2) \in F_{n, (-
\infty, t_0]} \times
F_{n, [t_0, \infty)}$ with $g\in F_{n,\infty}$ and such that
    $$
    \widetilde{\varphi}(g_2)^{-1} f_1 g_1 = \widetilde{f}_1$$ and
$$\widetilde{\varphi}(g_1)^{-1}
f_2 g_2 = \widetilde{f}_2.
    $$
It is sufficient that the first of the above equalities holds. In case
$t_0 \in \BZ[{1 \over n}]$, in particular $t_0$ can be a break point of
$g$,
  we define $g_2 = id$ and
$g_1 = f_1^{-1} \widetilde{f}_1$.

\noindent
Suppose now that  $t_0 \notin \BZ[{1 \over n}]$ and hence $t_0$ cannot  be
a break point of
$g$.
  We can define $g_1$ as $f_1^{-1} \widetilde{\varphi}(g_2)
\widetilde{f}_1$ but have to
be sure that $g_2$ is chosen in such a way that  $t_0$ is not a break
point of $g$ i.e.
$\nu_1(g_1) = \mu_1(g_2)$.
  Note that by Corollary \ref{swaping},  $\nu_1
\widetilde{\varphi}=\widetilde{\varphi}^*(\nu_1)=\mu_1$.  Hence
    $$\begin{array}{ll}
    \nu_1(g_1)  & = \nu_1(f_1^{-1} \widetilde{\varphi}(g_2)\widetilde{f}_1)
\\
    & = \nu_1(f_1^{-1})+ \nu_1( \widetilde{\varphi}(g_2)) + \nu_1(
\widetilde{f}_1) \\
    & = - \nu_1(f_1) + \nu_1(\widetilde{f}_1) + \mu_1(g_2).\end{array}$$
    Furthermore,  since $f_1 = \widetilde{\varphi}(f_2)^{-1}$ we have
    $$\nu_1(f_1) = \nu_1 (\widetilde{\varphi}(f_2)^{-1}) =  -
\nu_1(\widetilde{\varphi}(f_2)) = - \mu_1 (f_2).$$
But as $t_0$ is not a break point of $f$, we have $\mu_1(f_2) =
\nu_1(f_1)$.
Therefore $\nu_1(f_1) = 0$, and similarly $\nu_1( \widetilde{f}_1) = 0.$
    Combining this with the calculations above we obtain
    $$
    \nu_1(g_1) = - \nu_1(f_1) + \nu_1( \widetilde{f}_1) + \mu_1(g_2) =
\mu_1(g_2).
    $$
\end{proof}

\begin{lemma}\label{tec4} Let $h_1$ and $ h_2$ be elements of order 2 in
$G =
F_{n, \infty} \rtimes \langle h \rangle$, where $h$ has property $(\ast)$
for some $k$.  Let $t_i$ be the
unique real number such that $(t_i)h_i = t_i$ for $i = 1,2$. If there
is some $\hat g\in
F_{n,\infty}$ such that $(t_1)\hat g=t_2,$ then $h_1$ and $h_2$ are
conjugated in $G$.
\end{lemma}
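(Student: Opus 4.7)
The plan is to reduce to Lemma~\ref{tec3} by a two‑step argument: first conjugate to arrange that $h_1,h_2$ share a fixed point, then re‑split $G$ using $h_1$ in place of $h$ so that this shared fixed point plays the role of $t_0$.

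First, write $h_i=f_ih$ with $f_i\in F_{n,\infty}$; the relation $h_i^2=1$ together with $h^2=1$ forces $\varphi(f_i)=f_i^{-1}$. Using the given $\hat g\in F_{n,\infty}$ with $(t_1)\hat g=t_2$, replace $h_1$ by its $G$-conjugate $\hat g^{-1}h_1\hat g=\bigl(\hat g^{-1}f_1\varphi(\hat g)\bigr)h$; this is again an order-$2$ element of $G$ of the form (element of $F_{n,\infty}$)$\cdot h$, now with unique fixed point $(t_1)\hat g=t_2$. Conjugacy in $G$ is preserved, so we may assume $t_1=t_2$; denote the common value by $t$.

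Second, observe that $G$ also splits as $F_{n,\infty}\rtimes\langle h_1\rangle$: indeed $h_1=f_1h$ is a decreasing order-$2$ homeomorphism of $\mathbb R$ not lying in $F_{n,\infty}$, and conjugation $\varphi_1(\,\cdot\,):=h_1^{-1}(\,\cdot\,)h_1$ preserves $F_{n,\infty}$. The unique fixed point of $h_1$ is $t$, and property $(\ast)$ is an intrinsic statement about the order-$2$ elements of $G$ and their fixed points, so it transfers from $h$ to $h_1$ with the same constant $k$. In this new splitting
\[
f_2':=h_2h_1=(f_2h)(f_1h)=f_2\varphi(f_1)=f_2f_1^{-1}\in F_{n,\infty},
\]
so $h_2=f_2'h_1$. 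One checks immediately that $\varphi_1(f_2')=(f_2')^{-1}$ (from $h_2^2=1$) and $(t)f_2'=t$ (combining $(t)h_2=t=(t)h_1$).

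Now apply Lemma~\ref{tec3} to the involution $h_1$ (with fixed point $t$), taking $f=1$ and $\widetilde f=(f_2')^{-1}$; the required hypotheses $(t)\widetilde f=t$ and $\varphi_1(\widetilde f)=\widetilde f^{-1}$ hold by the previous paragraph. This yields $g\in F_{n,\infty}$ with $(t)g=t$ and $\varphi_1(g)^{-1}g=(f_2')^{-1}$, i.e.\ $\varphi_1(g)=gf_2'$. Using the identity $h_1g=\varphi_1(g)h_1$ (immediate from $\varphi_1(g)=h_1^{-1}gh_1$ and $h_1^2=1$),
\[
g^{-1}h_1g \;=\; g^{-1}\varphi_1(g)\,h_1 \;=\; f_2'h_1 \;=\; h_2.
\]
Hence $h_1$ and $h_2$ are conjugate in $G$, an explicit conjugator being the product $\hat g\cdot g\in F_{n,\infty}$.

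The main obstacle is the mismatch between the shared fixed point $t$ of $h_1,h_2$ and the fixed point $t_0$ of the auxiliary involution $h$ required by Lemma~\ref{tec3}: a priori we have no reason to expect $t=t_0$, and it is not clear that an element of $F_{n,\infty}$ carrying $t$ to $t_0$ exists. The resolution is to change the splitting of $G$ so that $t$ itself plays the role of $t_0$; the only point needing verification is that property $(\ast)$ is a statement about $G$ rather than about a particular choice of involution splitting it.
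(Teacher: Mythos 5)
Your proof is correct and is essentially the argument of the paper: the paper likewise re-splits $G$ as $F_{n,\infty}\rtimes\langle h_1\rangle$, lets $\varphi_1$ be conjugation by $h_1$, and applies Lemma~\ref{tec3} relative to $h_1$, the only cosmetic difference being that you first conjugate $h_1$ by $\hat g$ to align the fixed points and then use the pair $(1,(f_2')^{-1})$, whereas the paper absorbs $\hat g$ into $\widetilde f=\hat g f\varphi_1(\hat g)^{-1}$ and applies Lemma~\ref{tec3} to the pair $(\widetilde f,\mathrm{id})$. Your explicit remark that property $(\ast)$ is intrinsic to $G$ and hence transfers from $h$ to $h_1$ is a point the paper uses implicitly.
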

\begin{proof}
Consider $f \in F_{n, \infty}$ defined by $$h_2 = f h_1.$$
We
write $G$ as $F_{n, \infty} \rtimes \langle h_1 \rangle $ and $\varphi_1
\in Aut(F_{n,
\infty})$ is conjugation by $h_1$. Let $\widetilde{f} = \hat{g} f
\varphi_1(\hat{g})^{-1}$. Then
$$
(t_1)\widetilde{f} =(t_1) \hat{g} f \varphi_1(\hat{g})^{-1}=(t_2)f
h_1\hat{g}^{-1}h_1=(t_2)h_2 \hat{g}^{-1}h_1 =
(t_2) \hat{g}^{-1}h_1=
t_1.
$$
Note that since $h_2^2 = id$ we have $\varphi_1(f) = f^{-1}$, hence
$\varphi_1 (\widetilde{f}) = \widetilde{f}^{-1}$.
Now by Lemma \ref{tec3} there is a $g \in F_{n, \infty}$ such that
$g\widetilde{f}\varphi_1({g})^{-1} = id$. Thus for $\widetilde{g} = g
\hat{g}$ we have
$$\begin{aligned}
id =g\widetilde{f}\varphi_1({g})^{-1}=g\hat{g} f
\varphi_1(\hat{g})^{-1}\varphi_1({g})^{-1}
    = & \widetilde{g}  f \varphi_1(\widetilde{g})^{-1}=\\
     \widetilde{g}  h_2h_1 h_1(\widetilde{g})^{-1} h_1  =
& \widetilde{g} h_2  (\widetilde{g})^{-1} h_1,
\end{aligned}$$
hence
$$
h_1 = h_1^{-1}  = \widetilde{g}^{-1} h_2 \widetilde{g}.
$$
\end{proof}

\begin{theorem} \label{thmclasses} Let $\varphi$ be an automorphism of
$F_{n, \infty}$ of
order 2 given by conjugation with a homeomorphism  $h$ of the real line.
Let  $t_0$ be the unique real number such that $(t_0)h = t_0$.
\begin{itemize}
\item[a)] If $h$ is PL, then  $h$ has property $(\ast)$ for 2. In
particular $t_0  \in {1 \over
2}\BZ[{1 \over n}]$;
\item[b)] If $h$ has property $(\ast)$ for some integer, then  there are
only
finitely many conjugacy
classes of elements of order 2 in $F_{n, \infty} \rtimes \langle
h\rangle$.
\end{itemize}

\end{theorem}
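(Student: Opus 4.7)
I would start by noting that every order~$2$ element of $G=F_{n,\infty}\rtimes\langle h\rangle$ has the form $h_i=f_ih$ with $f_i\in F_{n,\infty}$ satisfying $\varphi(f_i)=f_i^{-1}$. Both $f_i$ and $h$ are piecewise linear with break points in $\BZ[\tfrac{1}{n}]$, with slopes that are signed integer powers of $n$, and both preserve $\BZ[\tfrac{1}{n}]$; for $h$ these properties follow from the structural description of PL conjugators of $F_{n,\infty}$ in \cite{Brin,BrinGuzman}. Hence $h_i$ inherits these properties and is PL, decreasing and of order~$2$. Let $t_i$ be its unique fixed point; the slope of $h_i$ on a right neighbourhood of $t_i$ has the form $-n^{r}$ for some $r\in\BZ$, and $h_i^2=\mathrm{id}$ then forces the slope on a left neighbourhood to be $-n^{-r}$.

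I would then split into two cases. If $r\neq 0$ the two slopes differ, so $t_i$ is itself a break point of $h_i$, giving $t_i\in\BZ[\tfrac{1}{n}]\subseteq\tfrac{1}{2}\BZ[\tfrac{1}{n}]$. If $r=0$, then on some open neighbourhood $(a,b)\ni t_i$ one has $(x)h_i=2t_i-x$; choosing any $p\in\BZ[\tfrac{1}{n}]\cap(a,b)$ (possible by density) yields $(p)h_i\in\BZ[\tfrac{1}{n}]$, whence $2t_i=p+(p)h_i\in\BZ[\tfrac{1}{n}]$. In either case $t_i\in\tfrac{1}{2}\BZ[\tfrac{1}{n}]$, verifying property $(\ast)$ for $k=2$ and, as a special case, placing $t_0$ in $\tfrac{1}{2}\BZ[\tfrac{1}{n}]$.

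\textbf{Plan for part (b).} By the remark following the definition of property $(\ast)$, I may assume $\gcd(k,n)=1$. Let $h_1,h_2\in G$ be order~$2$ elements with fixed points $t_1,t_2\in\tfrac{1}{k}\BZ[\tfrac{1}{n}]$. Lemma \ref{tec4} reduces the $G$-conjugacy of $h_1$ and $h_2$ to the existence of some $\hat g\in F_{n,\infty}$ with $(t_1)\hat g=t_2$, and Lemma \ref{odd1} supplies such a $\hat g$ whenever $k(n-1)\mid k(t_2-t_1)$ in $\BZ[\tfrac{1}{n}]$, equivalently
\[t_2-t_1\in(n-1)\BZ[\tfrac{1}{n}].\]
Thus the conjugacy class of $h_i$ is determined by the coset of $t_i$ in the abelian quotient $Q:=\tfrac{1}{k}\BZ[\tfrac{1}{n}]/(n-1)\BZ[\tfrac{1}{n}]$, and the theorem reduces to showing $|Q|<\infty$. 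Using $n\equiv 1\pmod{n-1}$ one computes $\BZ[\tfrac{1}{n}]/(n-1)\BZ[\tfrac{1}{n}]\cong\BZ/(n-1)$, and using $\gcd(k,n)=1$ one checks that $\tfrac{0}{k},\ldots,\tfrac{k-1}{k}$ are pairwise inequivalent modulo $\BZ[\tfrac{1}{n}]$, giving $\tfrac{1}{k}\BZ[\tfrac{1}{n}]/\BZ[\tfrac{1}{n}]\cong\BZ/k$. Therefore $|Q|=k(n-1)$, bounding the number of conjugacy classes.

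\textbf{Main obstacle.} The substantive external input is in part (a): the fact that a PL $h$ inducing an automorphism of $F_{n,\infty}$ has slopes which are signed integer powers of $n$, break points in $\BZ[\tfrac{1}{n}]$, and preserves $\BZ[\tfrac{1}{n}]$, for which I would cite \cite{Brin,BrinGuzman}. Once this is available, the local slope analysis at the fixed point is elementary, and part (b) amounts to recognising the hypothesis of Lemma \ref{odd1} as a congruence modulo $(n-1)\BZ[\tfrac{1}{n}]$ and counting cosets.
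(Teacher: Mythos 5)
Your part (b) is essentially the paper's own argument: after reducing to $\gcd(k,n)=1$, you combine Lemma \ref{tec4} with Lemma \ref{odd1} exactly as the paper does, recognising the hypothesis of Lemma \ref{odd1} as the congruence $t_2-t_1\in(n-1)\BZ[{1\over n}]$; the paper concludes by noting that $\BZ[{1\over n}]/k(n-1)\BZ[{1\over n}]$ is finite, while you compute the order $k(n-1)$ of ${1\over k}\BZ[{1\over n}]/(n-1)\BZ[{1\over n}]$, but this is the same reduction and count. Part (a) is where you genuinely diverge. The paper takes an order-2 element $fh$, expands the identity $(x)hfhf=x$ near $(r_0)h$, and uses only two facts about a PL normalising $h$, quoted from \cite[Lemma~3.2.2]{BrinGuzman}: its slopes lie in the multiplicative group generated by the prime divisors of $n$, and $(\BZ[{1\over n}])h=\BZ[{1\over n}]$; this yields $(r_0)h={1\over2}(n^sb_1+\lambda)\in{1\over 2}\BZ[{1\over n}]$, and then $r_0$ is pulled back by $f^{-1}$. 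You instead analyse $h_i=f_ih$ directly at its fixed point $t_i$, via the dichotomy: either $t_i$ is a break point of $h_i$, or $h_i$ is locally $x\mapsto 2t_i-x$, and in the latter case a nearby point of $\BZ[{1\over n}]$ gives $2t_i\in\BZ[{1\over n}]$. That is a cleaner local argument and it does work, but not quite on the inputs you state.

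Two corrections are needed. First, the slopes of a PL $h$ normalising $F_{n,\infty}$ are not signed integer powers of $n$ in general: they only lie in the group generated by the prime divisors of $n$ (strictly larger for composite $n$). This is harmless for you, since your dichotomy only uses whether the one-sided slope of $h_i$ at $t_i$ is $-1$ or not (positivity of $u$ and $u^2=1$ force $u=1$ regardless of the slope group). Second, and this is the load-bearing point, your break-point branch needs the break points of $h$ to lie in $\BZ[{1\over n}]$, so that $t_i$ a break of $f_ih$ forces $t_i\in\BZ[{1\over n}]$ via $(t_i)f_i\in\BZ[{1\over n}]$. This is not among the facts the paper extracts from \cite{BrinGuzman}, and the paper's computation is arranged precisely so as to avoid needing it; citing it wholesale together with the (incorrect) slope statement is not enough. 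The fact is true and can be supplied in a few lines: if $x_0$ is a break of $h$ with distinct one-sided slopes, choose $f\in F_{n,\infty}$ affine on a neighbourhood of $(x_0)h$ with $((x_0)h)f$ avoiding the discrete break set of $h$; then $\varphi(f)=hfh$ lies in $F_{n,\infty}$ and has a break at $x_0$, whence $x_0\in\BZ[{1\over n}]$. With that supplement (or a precise reference covering break points), your route to (a) is sound and arguably more transparent than the paper's; without it, the break-point case is a gap.
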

\begin{proof}
     a) Let $f \in F_{n, \infty}$ be such that $fh $ has order 2 i.e.
$\varphi(f) = f^{-1}$ and $r_0$ be the unique real number such that
$(r_0)fh = r_0$.
  Suppose that $r_0 \notin \BZ[{1 \over n}]$, otherwise there is
nothing to prove. Then
for some small $\epsilon > 0$ we  have that the restriction of $f$ on
$[r_0 - \epsilon,
r_0 + \epsilon]$ is a linear function, say $n^s t + \lambda$ for some
$\lambda \in \BZ[{1
\over n}]$, $s \in \BZ$. Since $h$ is PL, on a neighborhood of
$(r_0)h$ we have
$(t)h = a_1 t + b_1$, where the slope is in the multiplicative group
of $\BR$ generated
by the prime divisors of $n$ and $(\BZ[{1 \over n}])h = \BZ[{1 \over n}]$
\cite[Lemma~3.2.2]{BrinGuzman}.

\noindent   Since $(x)hfhf = x$ and $(r_0)h^2 f = (r_0)f = (r_0)h$, we
deduce
that for $x$ in a small
neighbourhood of $(r_0)h$ we have
$$
x = (x)hfhf = n^s( a_1(n^s(a_1 x + b_1)  + \lambda))+ b_1) + \lambda
=$$ $$ (n^s a_1)^2 x
+
(n^s)^2 a_1 b_1 + n^s a_1 \lambda + n^s b_1 + \lambda.
$$
Then $(n^s a_1)^2 = 1$, hence $n^s a_1 = -1$ since $a_1 < 0$ (remember
$h$ is decreasing)
i.e. $a_1 = - n^{-s}$. Since $(\BZ[{1 \over n}])h = \BZ[{1 \over n}]$
we deduce that $b_1
\in \BZ[{1 \over n}]$. Then
$$
r_0 = (r_0)h^2 = a_1((r_0)h) + b_1
$$
and
$$
(r_0)h = (r_0)f = n^s r_0 + \lambda = (r_0)hhf = n^s(a_1 ((r_0)h) + b_1) +
\lambda
= $$ $$n^s a_1 ((r_0)h) + n^s b_1 + \lambda = - (r_0)h + n^s b_1 +
\lambda.
$$
Thus
$$(r_0)f = (r_0)h = {1 \over 2} (n^s b_1 + \lambda) \in {1 \over 2}
\BZ[{1 \over n}].
$$
Since $r_0 \notin \BZ[{1 \over n}]$ we have $(r_0)f \notin \BZ[{1
\over n}]$ and $(r_0)f$
is not a break point of $f^{-1}$. So in a neighborhood of $(r_0)f$ the
function $f^{-1}$
is given by $(x)f^{-1} = n ^{\alpha} x + \beta$, where $\beta \in
\BZ[{1 \over n}],
\alpha \in \BZ$. Thus
$$
r_0 = (r_0) f f^{-1} = n^{\alpha} ((r_0)f) + \beta  \in {1 \over 2}
\BZ[{1 \over n}]
$$
as required.
Note that for $n$ even we have ${1 \over 2} \BZ[{1 \over n}] \subseteq
\BZ[{1 \over n}]$.
Thus
$$t_0, r_0 \in \BZ[{1 \over n}] \hbox{ if } n \hbox{ is even.}$$

\medskip

b) Let $k$ be an integer such that $h$ has $(\ast)$ for $k$. By the
comment
after the definition of propery $(\ast)$ we may assume that $n,k$ are
coprime. Let $h_1, h_2$ be elements of order 2 in $G = F_{n, \infty}
\rtimes \langle h
\rangle$. Let $t_i$ be the
unique real number such that $(t_i)h_i = t_i$ for $i = 1,2$. Put
$$t_1={m\over k}n^a,\quad t_2={r\over k}n^c$$
for certain integers $m,r,a,c$. Then if these integers satisfy
$$k(n-1)\mid kt_2-kt_1= rn^c-mn^a$$
in $\BZ[{1\over n}]$, by Lemma \ref{odd1}
there is some $\hat g\in F_{n,\infty}$ such that $(t_1)\hat g=t_2$.
Therefore $h_1$ and $h_2$ are conjugated by Lemma \ref{tec4}. As the
number
of possible values of $kt_i$ in the quotient ring $\BZ[{1\over
n}]/k(n-1)\BZ[{1\over n}]$ is finite, the result follows.

    \end{proof}
\section{Proof of Theorem A} \label{sectionproofA}

  \noindent  The first part of Theorem A is Lemma \ref{order2}.
The second part of Theorem A follows from  Corollary
\ref{importantcor} and Lemma
\ref{finitegenerationused}.

\noindent  To show the third part of Theorem A  we use  that by Theorem
\ref{thmclasses}, $t_0 \in {1 \over
2} \BZ[{1 \over n}]$. Note that if $n$ is even,  $t_0 \in \BZ[{1 \over
n}]$ and if $n$ is
odd we can apply Lemma \ref{odd2}. In both cases  there is  an $f \in
F_{n, \infty}$ such
that $(t_0)f = t_0$ and the right-hand slope of $f$ at $t_0$ is not 1.
Then
part (2) of Theorem A implies that  $C_{F_{n, \infty}}(\varphi)$ is of
type $\FFF_{\infty}$.

    \section{Proof of Theorem C and Corollary D} \label{sectionproofCD}

\noindent {\bf Theorem C.} {\it Any finite extension $G$ of $F_{n,
\infty}$ where
the elements of
finite order act on $F_{n, \infty}$ via conjugation with PL
homeomorphisms  is of type Bredon
$\FFF_{\infty}$.
  }

    \begin{proof} By Lemma \ref{order2} any element of finite order in
$Aut(F_{n, \infty})$
has order 2 and is given by conjugation with a decreasing
homeomorphism of the real line
(of multiplicative order 2).
    If a product of such two elements has finite order then the product
is a conjugation by
an increasing function ( i.e. a composition of two decreasing function
is increasing), so
cannot have a finite order unless the product is the trivial element.
Thus every finite
subgroup $B$ of $G$ has a subgroup $B_0$ of index at most two  acting
trivially on
$F_{n, \infty}$.

    Let $\pi : G \to G / F_{n, \infty}$ be the canonical projection  and
let $P$ and $Q$ be
finite subgroups of $G$ such that $\pi(P) = \pi(Q)$. Since $F_{n,
\infty}$ does not have
non-trivial elements of finite order $F_{n, \infty} \cap P = 1 = F_{n,
\infty} \cap Q$,
so we consider $F_{n,\infty} \rtimes P = F_{n,\infty}  \rtimes Q $ as
a subgroup of $G$.
Since there are only finitely many possibilities for $\pi(P)$ it
suffices to show that
$F_{n,\infty}  \rtimes P$ has finitely many conjugacy classes of
finite subgroups, so
without loss of generality we can assume that $G = F_{n,\infty}  \rtimes
P$.

\noindent
Let $P_0$ be the subgroup of $P$ of elements acting trivially on
$F_{n, \infty}$ via
conjugation.
    Consider the short exact sequence of groups $P_0 \mono G \epi F_{n,
\infty} \rtimes (P /
P_0)$. $P / P_0$ is either trivial or a cyclic group of order 2. Hence 
Theorem \ref{thmclasses} implies that there are only finitely many conjugacy classes of
finite subgroups in $F_{n, \infty} \rtimes (P / P_0)$. Since $P_0$ is finite, this
implies that there are only finitely many conjugacy classes of finite subgroups in $G$.

Finally  we show that the centraliser $C_G(P)$ of $P$ in $G$ is of
type $\FFF_{\infty}$. If
$P = P_0$ then $C_G(P)$ is a finite extension of $F_{n, \infty}$, so
is of type
$\FFF_{\infty}$. If $P \not= P_0$ let the action of $P / P_0$ on $F_{n,
\infty}$ be given by
conjugation with a real homeomorphism $h$ and
    define $t_0$ as the unique real number such that $(t_0)h = t_0$. By
Theorem
\ref{thmclasses}, $t_0 \in {1 \over 2} \BZ[{1 \over n}]$ as the
hypothesis imply that $h$
is PL. Note that by Lemma \ref{odd2}
there is $f \in F_{n, \infty}$ such that $(t_0)f = t_0$ and the right-hand
slope of $f$ at
$t_0$ is not 1. Then for $\varphi \in Aut(F_{n, \infty}) $ given by
conjugation with $h$
by Theorem A, part(2)
$$C_G(P) \cap F_{n, \infty} = C_{F_{n, \infty}} (P) = C_{F_{n, \infty}}
(\varphi)$$
    is of type $\FFF_{\infty}$. Note that $C_{F_{n, \infty}} (\varphi)$ is
a subgroup
of
finite index in $C_G(P)$, hence $C_G(P)$ is of type $\FFF_{\infty}$ as
required.
    \end{proof}

\noindent{\bf Corollary D. }{\it Any finite extension of $F$ is of type
Bredon $\FFF_{\infty}$.}

     \begin{proof} By \cite[Lemma~5.1]{Brin} for $n = 2$ any orientation
preserving
automorphism $\theta$ is given by conjugation with a PL homeomorphism
$g$ of the real
line. As an automorphism of finite order is the conjugation with a
homeomorphism of the
real line $h$ that is the composition  of some $g$ as above with the map
$h_0 :
x \to - x$ we
deduce that $h$ is PL. Now we can apply the previous theorem.
     \end{proof}

\section{Exotic automorphisms}

\noindent
It was shown in  \cite[Thm.~6.1.5]{BrinGuzman} that for each $n \geq 3$
there exist \lq\lq exotic"
orientation preserving automorphisms $\theta$ of $F_{n, \infty}$, where
exotic means that the
automorphisms are given by
conjugation with a homeomorphism of the real line, which is not PL.

In this section  we shall show that for each $n \geq 3$ there exist
infinitely many exotic automorphisms $\varphi$ of order $2$ in $Aut(F_{n,
\infty})$. It will turn out, however, that the unique element fixed by
these automorphisms as constructed in Lemma \ref{order2} is $t_0=0.$ Then
by Theorem A, part(2) $C_{F_{n,\infty}}(\varphi)$ is of type
$\FFF_{\infty}.$

We adopt the notation of \cite[Section 2.1, Section 4]{BrinGuzman} and
following \cite{BrinGuzman} all homomorphisms, in particular
automorphisms, considered in this section act on the right. Consider the
PL functions $g_{n,i}$ and $t_i$ given by
$$
(x)g_{n,i}=\Bigg\{\begin{aligned}
&x\quad x<i\\
&n(x-i)+i\quad i\leq x\leq i+1\\
&x+n-1\quad x>i+1\\
\end{aligned}
$$
and $t_s: x \mapsto x +s$ (here $n,s\in\BZ$). Please note that $t_i$ is
not to be confused with the element $t_0 \in \R$ of Lemma \ref{order2}.
Then by \cite[Lemma 2.1.2]{BrinGuzman}
$$F_{n,\infty}=\langle t_{n-1},g_0,\ldots,g_{n-2} \rangle$$
and $t_s^{-1}g_{n,i}t_s=g_{n,i+s}$.

Let $Aut^0(F_{n,\infty}, t_1)$ denote  the set of automorphisms of $F_{n,
\infty}$ given by conjugation with a
homeomorphism of the real line fixing  0 and commuting with $t_1$.
For $n \geq 3$ we consider the map constructed in
\cite[Thm.~4.3.1]{BrinGuzman}
    $$
    \Lambda_{2,n} : Aut^0 (F_{2, \infty}, t_1) \to Aut^0 (F_{n, \infty},
t_1),
    $$ for which the image of every  non-trivial element is exotic.

\noindent
Let $\rho_j$ be the automorphism of $F_{j, \infty}$ given by  conjugation
with $h_0 : x \mapsto - x$ and choose an arbitrary element $\theta \in
Aut^0 (F_{2, \infty}, t_1)$ such that $ (\theta\rho_2)^2 = id$ (for the
existence of a suitable $\theta$, see the end of  page 17). Denote by
$\theta_n = \Lambda_{2,n}(\theta)$. We now show that $ (\theta_n\rho_n)^2
= id,$ hence $\theta_n\rho_n \in Aut^0 (F_{n, \infty}, t_1)$ is an exotic
element of finite order.

  A simple calculation shows that for $n \geq 2$
$$(g_{n,i})\rho_n = g_{n,-i-1} t_{n-1}^{-1}.$$
Write $(g_{2,0})\theta = w(g_{2,0}, t_1)$ as a word in the generators
$g_{2,0}$ and $t_1$.
For all $0\leq i <n$ the proof of \cite[4.3.1]{BrinGuzman} together with
\cite[4.1.2]{BrinGuzman} imply
that
$$(g_{n,i})\theta_n=w(g_{n,i},t_{n-1}).$$
Note also that for $n \geq 2$
$$(t_i)\rho_n=t_i^{-1}$$
and since $t_{n-1} \in \langle t_1 \rangle$, for $n \geq 3$ we have
$$(t_{n-1})\theta_n=t_{n-1}.$$

\noindent Furthemore
$$(g_{2,0})\theta\rho_2 = ( w(g_{2,0}, t_1))\rho_2 = w((g_{2,0})\rho_2,
(t_1)\rho_2) = w(g_{2,-1} t_1^{-1} , t_1^{-1}),$$
and similarly for every $i\geq 0$ and $n \geq 3$
$$g_{n,i}(\theta_n\rho_n)=w(g_{n,i},t_{n-1})\rho_n=w(g_{n,i}\rho_n,t_{n-1}\rho_n)=w(g_{n,-i-1}t_{n-1}^{-1},t_{n-1}^{-1}).$$

\noindent{\bf Claim.} $g_{n,0}(\theta_n\rho_n)^2=g_{n,0}.$

\medskip\noindent
We observe that
\begin{align}\label{conta1}
  g_{n,0}(\theta_n\rho_n)^2 & =
w(g_{n,-1}t_{n-1}^{-1},t_{n-1}^{-1})\theta_n\rho_n \notag \\
   & = w(t_{n-1}g_{n,n-2}t_{n-1}^{-2},t_{n-1}^{-1})\theta_n\rho_n \notag \\
   & = w((t_{n-1}g_{n,n-2}t_{n-1}^{-2})\theta_n\rho_n,
(t_{n-1}^{-1})\theta_n\rho_n) \notag \\
   &= w(t_{n-1}^{-1}(g_{n,n-2})(\theta_n\rho_n) t_{n-1}^2, t_{n-1}) \\
   &=
w(t_{n-1}^{-1}w(g_{n,-(n-1)}t_{n-1}^{-1},t_{n-1}^{-1})t_{n-1}^2,t_{n-1}
)\notag \\
   & =
w(t_{n-1}^{-1}w(t_{n-1}g_{n,0}t_{n-1}^{-2},t_{n-1}^{-1})t_{n-1}^2,t_{n-1}).
\notag
\end{align}

\noindent Similarly using that $(\theta\rho_2)^2 =id$ we have
\begin{align}\label{conta2}
g_{2,0} & =g_{2,0}(\theta\rho_2)^2 \notag \\
& = w(g_{2,-1} t_1^{-1}, t_1^{-1})\theta\rho_2 \notag \\
& = w(t_1 g_{2,0} t_1^{-2}, t_1^{-1})\theta\rho_2 \notag \\
& = w( (t_1 g_{2,0} t_1^{-2})\theta\rho_2, (t_1^{-1})\theta\rho_2) \\
& = w( t_1^{-1}  (g_{2,0})(\theta\rho_2) t_1^2 , t_1) \notag \\
& = w( t_1^{-1} w(g_{2,-1} t_1^{-1}, t_1^{-1}) t_1^2 , t_1) \notag \\
& = w( t_1^{-1} w(t_1 g_{2,0} t_1^{-2}, t_1^{-1}) t_1^2 , t_1). \notag
\end{align}

\noindent Now consider  the subgroup $H$ of $F_{n, \infty}$ generated by
$g_{n,0}$ and $t_{n-1}$.
There is an isomorphism $\nu : H \to F_{2, \infty}$ given by
$g_{n, 0}\nu = g_{2,0}$ and $t_{n-1}\nu = t_1$.
Applying (\ref{conta1}) and (\ref{conta2}) yields
\begin{align*}
   (g_{n,0}(\theta_n\rho_n)^2)\nu  & = ( w(t_{n-1}^{-1} w(t_{n-1} g_{n,0}
t_{n-1}^{-2}, t_{n-1}^{-1}) t_{n-1}^{2}, t_{n-1}))\nu  \\
   & = w( t_1^{-1} w(t_1g_{2,0} t_1^{-2}, t_1^{-1}) t_1^2 , t_1)  \\
   & = g_{2,0} = g_{n,0}\nu.
   \end{align*}
Hence the claim
$$
g_{n,0}(\theta_n\rho_n)^2 = g_{n,0}.
$$
follows.
Finally, for all $i>0$ we obtain
$$g_{n,i}(\theta_n\rho_n)^2 = (g_{n,0}^{t_1^i})(\theta_n\rho_n)^2 =
g_{n,0}^{t_1^i(\theta_n\rho_n)^2} = g_{n,0}^{t_1^i} = g_{n,i},$$
which yields
$$(\theta_n\rho_n)^2 = id$$
as required.

\medskip\noindent
Recall,  that by definition $\theta$ is given by conjugation with an
orientation preserving homeomorphism $f_{\theta}$ of the real line fixing
$0$ and commuting with $t_1$. So we have for any $s\in\BZ$
\begin{equation}
\label{ftheta}
(x+s) f_{\theta}= (x)f_{\theta} + s.
\end{equation}

\noindent The condition $(\theta\rho_2)^2 = id$ is equivalent with
$(f_{\theta}h_0)^2=1$ and (\ref{ftheta}) implies that this is equivalent
with
requiring that for all $x \in [0,1]$
$$(1-(1-x)f_{\theta})f_{\theta} = x.$$
By  \cite[Lemma~5.1]{Brin}
$f_{\theta}$ is PL and furthermore, by \cite[Thm.~1,~iv)]{Brin}
its restriction $f_1$ to $[0,1]$ is in $PL_2[0,1]$.
Hence $f_1$ is an element of $F=F_{2,\infty}$ via its realization on
the unit interval.

\noindent
There are infinitely many elements $f_1 \in F$ that via the outer
automorphism given by
conjugation with $x \to 1- x$ (i.e. flipping over  the interval) are sent
to $f_1^{-1}$. Any such $f_1$ gives rise to some $\theta$ whose image
$\Lambda_{2,n}(\theta)$ is not PL  and such that
$( \Lambda_{2,n} (\theta)\rho_n)^2 = id$. Hence we have  produced
infinitely many
automorphisms of order 2 of $Aut(F_{n, \infty})$, which  are not PL. But,
as mentioned above, this way we are not going to obtain potential examples
$\varphi$ of finite order in $Aut(F_{n, \infty})$, for which
$C_{F_{n,\infty}}(\varphi)$ is not finitely generated.

\end{document}